\documentclass[11pt]{amsart} 
\usepackage[lmargin=1in,rmargin=1in,tmargin=1in,bmargin=1in]{geometry}
\usepackage[ps,all,arc,rotate]{xy}
\usepackage{graphicx, float, epstopdf}
\usepackage{bbm}
\usepackage{color}
\usepackage[unicode,bookmarks=false]{hyperref}
\hypersetup{hidelinks}
\usepackage{centernot}
\usepackage{fancyhdr}
\usepackage{multirow}
\usepackage[utf8]{inputenc}
\usepackage{amsfonts,amssymb,amsmath,amsthm,mathrsfs}
\usepackage{graphics, setspace}
\usepackage{braket}
\usepackage{mathtools}
\usepackage{tikz}  
\usepackage{upgreek}
\usepackage{xcolor}
\usepackage{array,esint}

\numberwithin{equation}{section}
\numberwithin{figure}{section}
\allowdisplaybreaks[4]         
 
\newtheorem{lemma}{Lemma}[section]
\newtheorem{theorem}{Theorem}[section]
\newtheorem{proposition}{Proposition}[section]

\newtheorem{corollary}[lemma]{Corollary}
\theoremstyle{definition}

\newtheorem{remark}{Remark}[section]
\usepackage{booktabs}
\usepackage{array}
\usepackage{vcell}

\newcommand{\R}{\mathbb{R}}
\newcommand{\C}{\mathbb{C}}

\newcommand{\e}{\operatorname{e}}

\usepackage{needspace}
\newcommand{\BA}{\mathcal{B}_A}
\newcommand{\FA}{\mathcal{F}_A}
\newcommand{\Eset}{\mathcal{E}}
\newcommand{\Sr}{\mathcal{S}_r}
\newcommand{\epsU}{\varepsilon_U}
\newcommand{\PP}{\mathcal{P}}

\newcommand{\EE}{\mathcal{E}_f}
\newcommand{\ind}{\mathbf 1}

\newcommand{\eps}{\varepsilon}
\title[A folklore conjecture on multiplicative exponential sums]{A folklore conjecture on multiplicative exponential sums}
\author[N.~Robles]{Nicolas Robles}
\address{Quantum Sky, Project 137, 11950 Democracy Drive, Suite 650, Reston, VA 20190, USA}
\email{robles.nicolas.m@gmail.com}
\subjclass[2020]{Primary 11L07; Secondary 11N64, 11L40, 42A20}
\keywords{Multiplicative functions, exponential sums, major arcs, Dirichlet characters, pretentious large sieve, Carleson-Hunt inequality}

\begin{document}

\begin{abstract}
We prove a folklore conjecture on exponential sums with multiplicative
coefficients: if $f$ is multiplicative with $|f(n)|\le1$, $(a,q)=1$,
$|\alpha-a/q|\le q^{-2}$ and $q\le N/(\log N)^2$, then
\[
 \sum_{n\le N}f(n)\e(n\alpha)\ll\frac N{\log N}+\frac N{\sqrt q}.
\]
Complete multiplicativity is not required. We also prove two related
conjectured bounds: the refinement in which $q$ is replaced by
$q(1+N|\alpha-a/q|)$, when this quantity is at most $N/(\log N)^2$, and
the corresponding bound for the sum with logarithmic weights. Finally,
we show that the bound can fail for multiplicative $f$ satisfying only
$|f(p)|\le A$ at primes and $\sum_{n\le x}|f(n)|^2\le A^2x$, for every
fixed $A>1$.
\end{abstract}

\maketitle

\section{Introduction}\label{sec:intro}

\subsection{Previous results in the literature}\label{sec:literature}

Put $\e(t)=\exp(2\pi i t)$ and
\[
 S_f(N,\alpha)=\sum_{n\le N}f(n)\e(n\alpha),\qquad L=\log N\quad(N\ge3).
\]
Throughout, a multiplicative function satisfies $f(1)=1$ and
$f(mn)=f(m)f(n)$ whenever $(m,n)=1$. For a fixed $A\ge1$, define
\begin{align}
 \BA&=\{f\text{ multiplicative}: |f(n)|\le A\text{ for every }n\ge1\},\label{eq:BA}\\
 \FA&=\{f\text{ multiplicative}: |f(p)|\le A\text{ for primes }p,\
 \sum_{n\le x}|f(n)|^2\le A^2x\text{ for every real }x\ge1\}.\label{eq:FA}
\end{align}
If $|f(n)|\le A$ for every $n$, then $|f(p)|\le A$ and
$\sum_{n\le x}|f(n)|^2\le A^2\lfloor x\rfloor\le A^2x$, so that
$\BA\subset\FA$. The inclusion is strict for every $A\ge1$: the
multiplicative function equal to $n^{1/4}$ when $n$ is a square and to
$0$ otherwise is unbounded, and it belongs to $\mathcal F_1$ because
the sum of its squares over $n\le x$ is $\sum_{m\le\sqrt x}m\le x$.
The class $\FA$
is that of Montgomery and Vaughan \cite[Section~1]{MV}. All bounds are uniform in $f$, which may depend on
the summation length and the phase.

An early result is Daboussi's theorem: if $f\in\mathcal B_1$ and
$\alpha$ is irrational, then $S_f(N,\alpha)=o(N)$ as $N\to\infty$; see
\cite[Theorem~1]{D} and \cite{DD,DD2}. Montgomery and Vaughan
\cite[Corollary~1]{MV} proved that
\begin{equation}\label{eq:MV-intro}
 |S_f(N,\alpha)|\ll_A \frac NL+\frac N{\sqrt R}(\log R)^{3/2}
\end{equation}
for $f\in\FA$, $(a,q)=1$, $|\alpha-a/q|\le q^{-2}$ and
$2\le R\le q\le N/R$. Their argument expands $\log n$ as a sum over
prime divisors. They also showed that the term $N/L$ cannot be
decreased, even for completely multiplicative functions in
$\mathcal B_1$ \cite[Section~7]{MV}. At $\alpha=a/q$ with $q\le N$,
their Theorem~1 gives
\begin{equation}\label{eq:MV-thm1}
 |S_f(N,a/q)|\ll_A \frac NL+\frac N{\sqrt{\varphi(q)}}
 +\sqrt{Nq}\,\big(\log(2N/q)\big)^{3/2},
\end{equation}
and the last term is $O(N/L)$ when $q\le N/(L^2(\log L)^3)$.

For $f\in\mathcal B_1$, Bachman \cite[Theorem~4]{Bachman} obtained
\begin{equation}\label{eq:Bachman-intro}
 |S_f(N,\alpha)|\ll \frac NL+\frac N{\sqrt{\varphi(q)}}
\end{equation}
under $q\le Q=N/L^3$ and $|\alpha-a/q|\le1/(qQ)$. He also replaced the
factor $(\log R)^{3/2}$ in \eqref{eq:MV-intro} by
$\sqrt{\log R\log\log R}$ for $3\le R\le q\le N/R$
\cite[Theorem~5]{Bachman}. Here and below the
approximating fractions are reduced. For $\alpha=a/q$ with $q$ prime,
Maier and Sankaranarayanan \cite[Theorem~1.2]{MS} gave another proof of
the bound $N/L+N/\sqrt q$ in the range
$q\le N/(L^2\log L\log\log L)$, for sufficiently large $N$. They
identify this case as a consequence of Bachman's work. For the
class $\FA$, it was shown in
\cite[Theorem~1.1 and Corollary~1.1]{Robles} that
\begin{equation}\label{eq:R-intro}
 |S_f(N,\alpha)|\ll_A \frac NL+\frac N{\sqrt{\varphi(r)B}},
 \qquad B=\max\{1,N|\beta|\},
\end{equation}
when $\alpha=a/r+\beta$ with $(a,r)=1$, $r\le Q_0=N/L^3$ and
$|\beta|\le1/(rQ_0)$, and that the second term in
\eqref{eq:MV-intro} can be replaced by
$NR^{-1/2}\sqrt{\log\log(3R)}$ for $R\ge3$.

For $f\in\mathcal B_1$, sharper bounds are conjectured. De la
Bret\`eche and Granville \cite[(1.2)]{dBG} state as a folklore conjecture that
\begin{equation}\label{eq:folklore-conj}
 S_f(N,\alpha)\ll \frac NL+\frac N{\sqrt q}
\end{equation}
for completely multiplicative $f\in\mathcal B_1$, where
$q\le N/L^{2+\eps}$ for a fixed $\eps>0$ and $|\alpha-a/q|<q^{-2}$.
Their study of the major
arcs led them to the refined conjecture \cite[(1.4)]{dBG}
\begin{equation}\label{eq:refined-conj}
 S_f(N,\alpha)\ll \frac NL+\frac N{\sqrt{q(1+N|\beta|)}},
 \qquad \beta=\alpha-\frac aq.
\end{equation}
Granville and Lamzouri \cite[(1.2)]{GL} state
\eqref{eq:folklore-conj} for every $f\in\mathcal B_1$, with
$|\alpha-a/q|\le1/(qN)$ and $q\le N/L^{2+\eps}$. In the larger class
$\FA$, Bordignon, Bortolotto and Kerr \cite[(2)]{BBK} record the
expectation that the second term of \eqref{eq:MV-intro} can be replaced
by $N/\sqrt R$. Both \cite{dBG} and \cite{GL} also describe the
functions for which $S_f(N,\alpha)$ is large. For completely
multiplicative $f\in\mathcal B_1$, the sum can be large on the major
arcs only if $f$ is pretentious, that is, close to $\chi(n)n^{it}$ for
a primitive character $\chi$ of conductor dividing $q$ and a real $t$
\cite[Theorem~1]{dBG}. Under the hypotheses of
\cite[Corollary~1.1]{GL} on the minor arcs, if
$|S_f(N,\alpha)|\ge cN/L$ for a fixed $c>0$, then $f(p)$ correlates
with $\e(-hp\alpha)$ on an interval $z<p\le2z$, where $z\asymp_cN$ and
$h$ is a positive integer with $h=O_c(1)$.

The conjecture \eqref{eq:folklore-conj} is known in most ranges of $q$.
When $|\alpha-a/q|\le1/(qN)$, it follows from \cite[Theorem~4]{dBG}
that $S_f(N,\alpha)\ll N/\sqrt q$ for completely multiplicative
$f\in\mathcal B_1$ and $q\le L^{2-\eps}$; see \cite[Section~1.4]{GL}.
In the same range, the extension from completely multiplicative to
multiplicative coefficients in \cite[Section~2.2]{GL} carries the factor
\[
 \prod_{p^2\mid q}\frac{p+1}{p-1},
\]
which equals $1$ for squarefree $q$. De la Bret\`eche and Granville
\cite[Proposition~3]{dBG} sketch a proof that, for completely
multiplicative $f\in\mathcal B_1$, the bound \eqref{eq:folklore-conj}
can fail only if
\begin{equation}\label{eq:dBG-range}
 L^{2+o(1)}\le q\le L^2(\log L)^{1+o(1)},\qquad
 \bigg|\alpha-\frac aq\bigg|\le\frac{\log q\log\log q}{N},
\end{equation}
and Granville and Lamzouri \cite[Section~1.4]{GL} summarize
\eqref{eq:folklore-conj} as proved outside the range $q=L^{2+o(1)}$,
and inside it when $\varphi(q)\gg q$. In the remaining cases the
second terms of \eqref{eq:Bachman-intro} and \eqref{eq:R-intro} exceed
those of \eqref{eq:folklore-conj} and \eqref{eq:refined-conj} by the
factor $\sqrt{q/\varphi(q)}$, which is unbounded.

\subsection{New results}\label{sec:results}

We replace $\varphi(r)$ by $r$ in \eqref{eq:R-intro} for $f\in\BA$, and
we show that this replacement fails in $\FA$ for every fixed $A>1$. Our
main result is as follows. It contains the conjecture
\eqref{eq:folklore-conj} and, for bounded functions, the bound expected
in \cite{BBK}.

\begin{theorem}\label{thm:global}
Let $f\in\BA$. Suppose $(a,q)=1$, $|\alpha-a/q|\le q^{-2}$, and
$2\le R\le q\le N/R$. Then
\begin{equation}\label{eq:global}
 |S_f(N,\alpha)|\ll A\bigg(\frac NL+\frac N{\sqrt R}\bigg).
\end{equation}
For $(a,q)=1$, $1\le q\le N$ and $|\alpha-a/q|\le q^{-2}$, one also has
\begin{equation}\label{eq:three-term}
 |S_f(N,\alpha)|\ll A\bigg(\frac NL+\frac N{\sqrt q}+\sqrt{Nq}\bigg).
\end{equation}
Consequently, if $1\le q\le N/L^2$, then
\begin{equation}\label{eq:folklore}
 |S_f(N,\alpha)|\ll A\bigg(\frac NL+\frac N{\sqrt q}\bigg).
\end{equation}
\end{theorem}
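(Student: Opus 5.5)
We follow the Montgomery--Vaughan scheme and make one change: the place where they lose $(\log R)^{3/2}$, and where the major-arc arguments lose $\sqrt{q/\varphi(q)}$, is replaced by a sharper mean-value estimate over primes.

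\emph{Step 1 (identity).} For $f\in\BA$, write
\[
 S_f(N,\alpha)L\approx\sum_{n\le N}f(n)\e(n\alpha)\log n .
\]
Expand $\log n=\sum_{p^k\| n}k\log p$ and split $n=p^k m$ with $(m,p)=1$. The prime powers with $k\ge2$, and the correction from $\log(N/n)$, contribute $O(AN)$. This leaves the bilinear form
\[
 \sum_{p\le N}\log p\,f(p)\sum_{m\le N/p,\,p\nmid m}f(m)\e(pm\alpha).
\]
We handle the coprimality condition $p\nmid m$ directly, rather than passing to complete multiplicativity, since the statement explicitly does not assume it. Removing the condition costs $\sum_p (\log p)N/p^2\ll N$.

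\emph{Step 2 (split the primes).} Primes $p\le R$ are handled trivially or by a short argument and contribute $O(AN)$ plus $O(N\log R)$ against the $L$ normalization. For the dyadic ranges $P<p\le 2P$ with $R<P\le N/R$, we apply Cauchy--Schwarz in $m$ to reduce to
\[
 \Big(\sum_m |f(m)|^2\Big)^{1/2}\Big(\sum_{m\le N/P}\Big|\sum_{p\sim P}\log p\,\e(pm\alpha)\Big|^2\Big)^{1/2}.
\]
Expanding the square gives $\sum_{p,p'}\min(N/P,\|(p-p')\alpha\|^{-1})$. The diagonal contributes $N\log P$ per block. The off-diagonal terms are controlled via the distribution of $(p-p')\alpha$, using $|\alpha-a/q|\le q^{-2}$ together with a Brun--Titchmarsh type sieve bound for the number of pairs of primes with $p-p'\equiv h$. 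This is where $\varphi(q)$ versus $q$ enters. Montgomery and Vaughan bound each block by $N P^{-1/2}$ plus $\sqrt{N\cdot}$ terms, and summing the dyadic blocks over $P\in[R,N/R]$ produces a logarithmic loss.

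\emph{Step 3 (main obstacle: removing the log loss).} The key is to avoid summing the blocks with the triangle inequality. Instead, we treat all $p\in(R,N/R]$ at once through a large-sieve inequality weighted by $\log p/\sqrt p$ (equivalently, a multiplicative large sieve over characters mod $q$). Since $|f|\le A$ pointwise, rather than only $\FA$, we can use $\sum_{m\le N/p}|f(m)|\le AN/p$ trivially inside. This yields an $\ell^1$ rather than an $\ell^2$ loss, and the resulting sum $\sum_{p>R}\log p/p^{3/2}$ converges to $R^{-1/2}$ with no extra logarithm. To replace $\varphi(q)$ by $q$, we group $m$ by $d=(m,q)$. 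The sum over $d$ of the contributions $N/\sqrt{qd}\cdot(\dots)$ is then dominated by a convergent series, so the worst case has size $N/\sqrt q$ rather than $N/\sqrt{\varphi(q)}$. I expect the sieve bookkeeping here, namely showing that the divisor sum $\sum_{d\mid q}d^{-1/2}\cdot(\text{density})$ stays bounded and that boundedness of $f$ is genuinely used (since the statement fails in $\FA$), to be the hardest step.

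\emph{Step 4 (consequences).} The bound \eqref{eq:three-term} follows by running the same argument with $R$ unrestricted. The primes $p>N/q$ then produce the $\sqrt{Nq}$ term through the standard bound $\sum_{m\le M}\min(N/m,\|m\alpha\|^{-1})\ll (M+q+N/q)\log q$, refined so that no logarithm appears. Finally, \eqref{eq:folklore} follows: if $q\le N/L^2$, then $\sqrt{Nq}\le N/L$.
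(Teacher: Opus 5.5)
You have a genuine gap in Step~3: nothing in it replaces $\varphi(q)$ by $q$, and the route of Steps~2--3 cannot supply such a mechanism. After your Cauchy--Schwarz inequality in $m$, the only information about $f$ that survives is $|f(p)|\le A$ and $\sum_{m\le x}|f(m)|^2\ll A^2x$. The other factor involves only primes, and at $\alpha=a/q$ it is genuinely of order $NP/\varphi(q)$. Indeed $\sum_{p\sim P}\log p\,\e(pma/q)$ is about $P\,c_q(m)/\varphi(q)$, where $c_q$ is the Ramanujan sum, because the primes are spread over the $\varphi(q)$ reduced classes. So each block is at best $N/\sqrt{\varphi(q)}$. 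Grouping $m$ by $(m,q)$ cannot help, because the $m$ coprime to $q$ already produce this. Your appeal to boundedness, $\sum_{m\le N/p}|f(m)|\le AN/p$, adds nothing. It follows from the mean-square bound by Cauchy--Schwarz, and inserted inside the prime sum it only gives the trivial bound $\sum_{p\le N}(\log p)AN/p\ll ANL$. No factor $p^{-1/2}$ arises, so the series $\sum_{p>R}(\log p)p^{-3/2}$ has no source.

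In fact no argument using only these properties can work. The functions $f_q$ of Theorem~\ref{thm:sharpness-summary}, for any fixed $A>1$, satisfy $|f_q(p)|\le A$ and $\sum_{n\le x}|f_q(n)|^2\le x$, and vanish when $(n,q)>1$. Your argument would apply to them verbatim, yet $|S_{f_q}(N,a/q)|\asymp N/\sqrt{\varphi(q)}$ with $q/\varphi(q)\asymp\log\log q$ and $q\le N/L^2$. Separately, discarding $p\le R$ trivially costs $N\log R/L$, which is too large when $R$ is near $L^2$. The paper removes small primes through \eqref{eq:reduction}, with error $O_A(N)$.

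The missing idea is to use multiplicativity in the long variable. The paper applies Cauchy--Schwarz over the primes and keeps $S_f(N/p,p\alpha)$ intact. Brun--Titchmarsh in short intervals and reduced classes then reduces each block $T(P)$ to the maximal mean square $\EE(U,r,T)$ of the partial sums of $f(n)\e(n(b/r+\theta))$, over reduced $b\bmod r$ and $|\theta|\le T/U$. The new input is $\EE(U,r,T)\ll A^2U/r$ in place of $U/\varphi(r)$ (Propositions~\ref{prop:energy-small} and~\ref{prop:energy-large}). It comes from three steps: expanding in characters modulo $r$ with the Gauss-sum coefficients $\Gamma_\chi(n;r)$; writing $n=sm$ with $s\in\Sr$; and applying the pretentious large sieve and Hal\'asz's theorem in short intervals to a completely multiplicative $g$ with $f=g*h$. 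Outside a bounded exceptional set the character sums of $f$ save a power of $\log U$. Each exceptional character costs only $O(A^2U/r)$, by $\sum_{v\bmod r}|\Gamma_\chi(v;r)|\le(4/\sqrt6)\sqrt r$ and Carleson--Hunt.

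This needs $r$ and $B$ to be at most powers of $\log U$, so it is proved on the arcs of Theorem~\ref{thm:aux}. Theorem~\ref{thm:global} then follows by Dirichlet approximation at level $N/L^3$. Your Step~4 is therefore also misdirected. The term $\sqrt{Nq}$ comes from the displacement, since $a/q\ne a_1/r$ forces $rB_1\ge N/(2q)$; it does not come from the primes $p>N/q$. Moreover, the logarithm in $\sum_{m\le M}\min(N/m,\|m\alpha\|^{-1})\ll(M+q+N/q)\log q$ is genuine, since $\sum_{1\le m<q}\|ma/q\|^{-1}\asymp q\log q$, so it cannot be refined away. Nor do you need a convergent series over blocks: the $O(L)$ dyadic blocks are summed directly, and the factor $L$ cancels against the $L$ in the Montgomery--Vaughan identity.
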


The implied constants are absolute. The hypotheses under which
\eqref{eq:folklore-conj} is stated in \cite{dBG} and in \cite{GL} both
imply $q\le N/L^2$ and $|\alpha-a/q|\le q^{-2}$, so that
\eqref{eq:folklore} with $A=1$ proves the folklore conjecture in both
forms. Complete multiplicativity is not required. The dependence on $A$
is linear, by the reduction to $\mathcal B_1$ in
Lemma~\ref{lem:linear-A}, and it cannot be improved; see
Section~\ref{sec:A-sharpness}.

Theorem~\ref{thm:global} follows from an estimate on the arcs of
\eqref{eq:R-intro}, in which $\varphi(r)$ is replaced by $r$.

\begin{theorem}\label{thm:aux}
Let $f\in\BA$, $Q_0=N/L^3$, and suppose
\begin{equation}\label{eq:aux-hyp}
 (a,r)=1,\qquad 1\le r\le Q_0,\qquad
 \alpha=\frac ar+\beta,\qquad |\beta|\le\frac1{rQ_0}.
\end{equation}
Writing $B=\max\{1,N|\beta|\}$, one has
\begin{equation}\label{eq:aux-main}
 |S_f(N,\alpha)|\ll A\bigg(\frac NL+\frac N{\sqrt{rB}}\bigg).
\end{equation}
\end{theorem}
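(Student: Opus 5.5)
We reduce first to $A=1$ through Lemma~\ref{lem:linear-A}; it writes $f\in\BA$ in terms of $O(A)$ functions from $\mathcal B_1$, or otherwise reduces to $\mathcal B_1$ at the cost of a factor $A$. So assume $|f|\le1$. The plan follows the proof of \eqref{eq:R-intro} in \cite{Robles}. That argument gives the bound with $\varphi(r)$ in place of $r$, and the loss arises only at one point. There one expands $\e(an/r)$ into Dirichlet characters after removing $\gcd(n,r)$, and uses only the trivial count of the reduced residues. We keep the first half of the argument. We write $S_f(N,\alpha)$ through the Montgomery--Vaughan identity $\log n=\sum_{d\mid n}\Lambda(d)$, which costs the term $N/L$. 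After partial summation to handle $\beta$, which produces the factor $B^{-1/2}$ from the length $\asymp N/B$ of the effective intervals, this reduces to bilinear sums $\sum_{p}\sum_m f(m)\Lambda(p)\e(pm\alpha)$ over primes $p\nmid r$, with the prime-power and $p\mid r$ terms treated trivially.

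The new step is to handle the non-reduced residues precisely. We write $m=dm'$ with $d=\gcd(m,r)$. Multiplicativity gives $f(dm')=f(d)f(m')$ only when $(d,m')=1$, so we split $m=d\,m'$ where $d$ is the $r$-part of $m$, meaning its prime factors all divide $r$, and $(m',r)=1$. The sum then runs over $d\mid r^\infty$. For each $d$ the phase becomes $\e(a d p m'/r)$, and the modulus is $r_d=r/\gcd(d,r)$. Expanding in characters mod $r_d$ and applying the pretentious large sieve / Montgomery--Vaughan mean value to the bilinear forms gives a contribution of roughly $\frac{N}{d}\cdot\frac{1}{\sqrt{\varphi(r_d)B}}$. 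The Gauss sum factor $\tau(\bar\chi)$, of size $\sqrt{\text{conductor}}$, against $1/\varphi(r_d)$ is what produces the square root. Summing over $d$ then gives
\[
 \frac N{\sqrt B}\sum_{d\mid r^\infty}\frac{1}{d\sqrt{\varphi(r/(d,r))}}
 \ll\frac N{\sqrt{rB}}\prod_{p\mid r}\Big(\text{factor }1+O(1/p)\Big)\cdot\sqrt{\frac{r}{\varphi(r)}}\cdot(\ldots).
\]
The main obstacle is that the naive product here is still of size $\sqrt{r/\varphi(r)}$. To beat it we must use that the pieces with small $d$ are not independent. For $p\mid r$, the terms with $p\mid m$ and those with $p\nmid m$ carry characters of different moduli. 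We need orthogonality across the whole family $\{\chi \bmod r_d\}$ jointly, rather than a separate bound for each $d$.

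To get this joint orthogonality, we apply the large sieve to the combined sum over $d$ before using Cauchy--Schwarz. For each $d$, $\sum_{m'}f(m')\chi(m')\cdots$ has $L^2$ norm controlled by $\sum|f(m')|^2\le N/d$, with no loss from $\varphi$. We then use Cauchy--Schwarz in the form
\[
\sum_d \frac{1}{d}\Big(\frac{d}{N}\Big)^{1/2}\|\cdot\|_d\le\Big(\sum_d\frac{w_d}{d}\Big)^{1/2}\Big(\sum_d\frac{\|\cdot\|_d^2}{d\,w_d}\Big)^{1/2},
\]
with weights $w_d=\varphi(r_d)/r_d$, so that the density $\varphi(r)/r$ cancels. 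For $d\mid r^\infty$ we have $\sum_d w_d/d\asymp r/\varphi(r)$, since the sum equals $\prod_{p\mid r}(1-1/p)^{-1}$ up to the weights. The weighted dual sum gains the matching factor $\varphi(r)/r$, because the large-sieve count of residues coprime to $r_d$ is $\varphi(r_d)$ and not $r_d$. The factors of $r/\varphi(r)$ then cancel, and the total is $N/\sqrt{rB}$. Boundedness of $f$ enters in the $L^2$ bound $\sum_{m\equiv0\,(d)}|f(m)|^2\le N/d$. The class $\FA$ only controls the average, and in $\FA$ the mass can concentrate on multiples of $r$; this is consistent with the failure stated in the abstract. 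Verifying this weighted balance is where we expect the real work to lie.
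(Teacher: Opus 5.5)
\textbf{There is a genuine gap: the weighted Cauchy--Schwarz step over $d$ cannot produce the saving, and the boundedness input you name cannot separate $\BA$ from the counterexamples in $\FA$.}

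\emph{Cauchy--Schwarz cannot beat the termwise sum.} If $\|\cdot\|_d\le X_d$, then the right side of your inequality is at least $\sum_d d^{-1}(d/N)^{1/2}X_d$, whatever the weights $w_d$. A gain would need a bound for $\sum_d\|\cdot\|_d^2/(dw_d)$ that beats the sum of the individual bounds, that is, real orthogonality between different $d$. You do not supply this. It also cannot exist when $f$ vanishes on multiples of the primes dividing $r$. Then only $d=1$ occurs, and your bound for that piece is already $N/\sqrt{\varphi(r)B}$.

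\emph{The bookkeeping does not balance.} For squarefree $r$ one has $\sum_{d\in\Sr}w_d/d=\prod_{p\mid r}(1+1/(p(p-1)))\asymp1$, not $\asymp r/\varphi(r)$.

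\emph{The input you name holds for the counterexamples.} The only place you use boundedness is $\sum_{m\le N,\,d\mid m}|f(m)|^2\le N/d$. The functions $f_q$ of Theorem~\ref{thm:sharpness-summary} satisfy it. They vanish on multiples of the primes dividing $q$, so their mass sits on integers \emph{coprime} to $q$, not on multiples of $r$ as you suggest. They have $|f_q(p)|\le A$ with $A>1$ arbitrarily close to $1$, yet $|S_{f_q}(N,a/q)|\asymp N/\sqrt{\varphi(q)}$. So this input cannot be what yields $r$ in place of $\varphi(r)$. Pointwise information such as $\sum_{m\le x,\,(m,r)=1}|f(m)|^2\le x\varphi(r)/r+O(2^{\omega(r)})$ does separate the two classes.

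\emph{Even the correct density input leaves an unbounded loss.} Suppose each $d$-piece were bounded by $\frac Nd\sqrt{\varphi(r)/r}\,(\varphi(r/(d,r))B)^{-1/2}$. Summing over $d\in\Sr$ by the triangle inequality then costs $\prod_{p\mid r}(1+(p-1)^{-1/2})$ for squarefree $r$. The local factor is $1+O(p^{-1/2})$, not $1+O(1/p)$. This product is unbounded and exceeds $\sqrt{r/\varphi(r)}$.

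\textbf{What is missing.} You misplace the source of the totient in the argument behind \eqref{eq:R-intro}. It is not the count of reduced residues in a character expansion; it is the maximal function. After Cauchy--Schwarz and Brun--Titchmarsh one must bound $\EE(U,r,T)$, where $\max_{k\le U}$ sits inside an average over \emph{reduced} $b$. Carleson--Hunt applies only after extending to all $b\bmod r$, and that step is sharp for bounded coefficients such as $\e(-n/r)$. Your sketch never confronts this maximum, which comes from the coupling $pm\le N$.

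The paper expands in characters with $n=sm$, $s\in\Sr$, as you do, but then needs two further inputs.
\begin{itemize}
\item \emph{Non-exceptional characters.} The pretentious large sieve and Hal\'asz in short intervals (Proposition~\ref{prop:GHS-long}, Lemma~\ref{lem:short-characters}) give a bounded exceptional set $\Eset$. Outside $\Eset$ the character sums save a power of $\log U$. This saving absorbs $W_r\le H_rV_r/\sqrt r$ from \eqref{eq:W}. It also pays for restricting the maximum to $G\asymp H_rT$ grid points (Lemma~\ref{lem:coarse-grid}); the gaps are then short enough for full sampling to absorb the totient.
\item \emph{Exceptional characters.} For the boundedly many $\chi\in\Eset$, the paper groups $n$ by residue modulo $r$ and uses $\sum_v|\Gamma_\chi(v;r)|\le(4/\sqrt6)\sqrt r$ with Carleson--Hunt (Lemma~\ref{lem:one-character}). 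This gives $O(U/r)$ per character with no divisor-type loss.
\end{itemize}
Because the saving is only logarithmic, this works only when $r$ and $B$ are below fixed powers of $\log U$. The remaining dyadic blocks need the separate counting argument with \eqref{eq:old-block} and \eqref{eq:old-short}.
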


The same estimate gives the refined conjecture \eqref{eq:refined-conj}
in the range $q\max\{1,N|\beta|\}\le N/L^2$.

\begin{theorem}\label{thm:refined}
Let $f\in\BA$, $(a,q)=1$, $q\ge1$, and $\beta\in\R$. Put
$B=\max\{1,N|\beta|\}$ and $D=qB$. Then
\begin{equation}\label{eq:displacement-global}
 |S_f(N,a/q+\beta)|\ll A\bigg(\frac NL+\frac N{\sqrt D}+\sqrt{ND}\bigg).
\end{equation}
If
\begin{equation}\label{eq:wider-local}
 D\le N/L^2,
\end{equation}
one has
\begin{equation}\label{eq:refined}
 |S_f(N,a/q+\beta)|\ll A\bigg(\frac NL+
 \frac N{\sqrt{q\max\{1,N|\beta|\}}}\bigg).
\end{equation}
Condition \eqref{eq:wider-local} is equivalent to $q\le N/L^2$ and
$|\beta|\le1/(qL^2)$. The estimate \eqref{eq:refined} also holds when
$q\le Q_2=N/L^2$ and $|\beta|\le1/(qQ_2)$. In the conclusions,
$\max\{1,N|\beta|\}$ may be replaced by $1+N|\beta|$.
\end{theorem}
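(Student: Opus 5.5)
We deduce Theorem~\ref{thm:refined} from Theorem~\ref{thm:aux} and a trivial estimate, with a case split on the size of $D=qB$ relative to $Q_0=N/L^3$. By Lemma~\ref{lem:linear-A} we may assume $A=1$.

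\emph{Case $D\le Q_0$.} Then $q\le D\le Q_0$. We check $|\beta|\le 1/(qQ_0)$. If $B=1$, then $|\beta|\le 1/N\le 1/(qQ_0)$, because $qQ_0\le Q_0^2\le N$ for large $N$. If $B=N|\beta|$, then $qN|\beta|\le Q_0$, so $|\beta|\le Q_0/(qN)\le 1/(qQ_0)$, since $Q_0^2\le N$. Thus \eqref{eq:aux-hyp} holds with $r=q$, and Theorem~\ref{thm:aux} gives $N/L+N/\sqrt{qB}=N/L+N/\sqrt D$.

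\emph{Case $D>Q_0$.} Here we want $|S|\ll N/L+\sqrt{ND}$, and the terms $\sqrt{ND}$ and $N/\sqrt D$ only compete with $N/L$ when $D\in[L^2,N/L^2]$. If $D\ge N/L^2$, then $\sqrt{ND}\ge N/L$, and the trivial bound $N$ suffices once $D\ge N$. The remaining range $Q_0<D<N$ is the one that needs an argument. The natural move is to reapproximate $\alpha$ by Dirichlet's theorem with parameter $Q_0$: choose $a'/r$ with $r\le Q_0$ and $|\alpha-a'/r|\le 1/(rQ_0)$, and apply Theorem~\ref{thm:aux} to get $N/L+N/\sqrt{rB'}$. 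It then suffices to show $rB'\gg\min\{D,\,N/D\}\cdot(\text{something}\ge L^2)$, or directly that $N/\sqrt{rB'}\ll N/L+\sqrt{ND}$.

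If $a'/r=a/q$, then $r=q$, $B'=B$, and we are done. Otherwise $|a/q-a'/r|\ge 1/(qr)$, so
\[
 \frac1{qr}\le|\beta|+|\beta'|\le \frac{B}{N}+\frac{B'}{N}.
\]
This gives $N\le qr(B+B')\le 2qrBB'=2DrB'$, hence $rB'\ge N/(2D)$ and $N/\sqrt{rB'}\ll\sqrt{ND}$. So \eqref{eq:displacement-global} holds in every case.

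For \eqref{eq:refined}, assume $D\le N/L^2$. Then $\sqrt{ND}\le N/L$, so \eqref{eq:displacement-global} gives the claim. For the equivalence: if $D\le N/L^2$ then $q\le N/L^2$, and either $B=1$ with $|\beta|\le 1/N\le 1/(qL^2)$, or $N|\beta|q\le N/L^2$. The converse is the same computation. Under the alternative hypothesis $q\le Q_2$ and $|\beta|\le 1/(qQ_2)$, we have $D\le\max\{q,\,N/Q_2\}\le N/L^2$ whenever $L^2\le N/L^2$, and trivially otherwise. Finally, $\max\{1,x\}\asymp 1+x$, so the last sentence of the theorem follows.

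The main obstacle is the middle range $Q_0<D<N$, but the reapproximation argument above handles it cleanly. The only points to watch are that the Dirichlet approximation really satisfies \eqref{eq:aux-hyp}, and that the inequality $B+B'\le 2BB'$, valid because both are at least $1$, gives $rB'\gg N/D$ without losing logarithms.
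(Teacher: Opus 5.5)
\textbf{Overall.} Your reapproximation argument is the paper's proof. The paper applies Dirichlet's theorem with $Q=Q_0$ and Theorem~\ref{thm:aux} to the resulting fraction $a'/r$. Then either $a'/r=a/q$, giving $N/\sqrt D$, or $1/(qr)\le|\beta|+|\beta'|\le 2BB'/N$, giving $\sqrt{ND}$. Your handling of \eqref{eq:wider-local}, of the $Q_2$ variant and of $1+N|\beta|$ also matches the paper.

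\textbf{The error.} Your separate ``Case $D\le Q_0$'' is wrong as written. You claim $Q_0^2\le N$ for large $N$, but $Q_0^2=N^2/L^6$, which is much larger than $N$. So $D\le Q_0$ does not imply $|\beta|\le 1/(qQ_0)$, in either of your subcases. For example, take $q\asymp Q_0$ and $\beta=1/N$. Then $D=q\le Q_0$, but $|\beta|=1/N$ is larger than $1/(qQ_0)\asymp L^6/N^2$. Hence \eqref{eq:aux-hyp} fails with $r=q$, and Theorem~\ref{thm:aux} cannot be applied directly. In fact $|\beta|\le 1/N$ forces $|\beta|\le1/(qQ_0)$ only when $q\le N/Q_0=L^3$.

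\textbf{The fix.} The error is harmless, because your reapproximation argument never uses $D>Q_0$. Its subcase $a'/r=a/q$ already yields $N/L+N/\sqrt D$, so it proves \eqref{eq:displacement-global} for every $D$. Drop the case split. Add the remark that for bounded $N$, where $Q_0<1$ and Dirichlet's theorem with $Q=Q_0$ is unavailable, the trivial bound $|S_f(N,\alpha)|\le AN$ suffices. What remains is exactly the paper's argument.
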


These bounds cannot be improved in several respects, even for
completely multiplicative functions in $\mathcal B_1$; the examples are
given in Section~\ref{sec:sharpness}. The term $N/L$ cannot be
decreased, by the construction of Montgomery and Vaughan
\cite[Section~7]{MV}, and primitive characters show that neither
$N/\sqrt R$ in \eqref{eq:global} nor $N/\sqrt q$ in \eqref{eq:folklore}
can be replaced by a function of smaller order. The factor $B^{-1/2}$
in \eqref{eq:aux-main} cannot be replaced by $B^{-1/2-\eps}$ for any
fixed $\eps>0$, as in \cite[Proposition~8.1]{Robles}. For every fixed
$0<\eta<2$, the bound \eqref{eq:folklore} fails for some
$q\asymp N/L^{2-\eta}$. Finally, \eqref{eq:refined} requires a
restriction on $\beta$: it fails if \eqref{eq:wider-local} is replaced
by $|\beta|\le q^{-2}$ and an upper bound for $q$.

By partial summation, Theorem~\ref{thm:global} also gives, for
$f\in\BA$, $(a,q)=1$, $1\le q\le N$ and $|\alpha-a/q|\le q^{-2}$,
\[
 \sum_{n\le N}\frac{f(n)}n\e(n\alpha)
 \ll A\bigg(\log(2q)+\frac L{\sqrt q}\bigg),
\]
which for $A=1$ is the bound conjectured in \cite[(1.5)]{GL}; see
Corollary~\ref{cor:log-weights}.

The situation is different in the class $\FA$. The next result shows
that, for every $A>1$, the totient in \eqref{eq:R-intro} cannot be
replaced by $r$.

\begin{theorem}\label{thm:sharpness-summary}
For every fixed $A>1$, there are arbitrarily large odd squarefree $q$
and functions $f_q\in\FA$ such that
\begin{equation}\label{eq:counter-summary}
 \sum_{n\le x}|f_q(n)|^2\le x\quad(x\ge1),\qquad
 |f_q(p^k)|\le A\quad(p\text{ prime},\ k\ge1),
\end{equation}
and, for every sufficiently large real $N$ depending on $q$,
simultaneously for every $(a,q)=1$,
\begin{equation}\label{eq:counter-order}
 |S_{f_q}(N,a/q)|\asymp\frac N{\sqrt{\varphi(q)}}.
\end{equation}
One may take $q/\varphi(q)\asymp\log\log q$, and $N$ with $L\ge q$;
then $q\le N/L^2$ and $N/L\le N/q$.
\end{theorem}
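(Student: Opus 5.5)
The plan is an explicit construction. Take $q$ odd, squarefree, with many small prime factors, say $q=\prod_{3\le p\le y}p$, so that $q/\varphi(q)\asymp\log y\asymp\log\log q$. Build $f_q$ around a primitive character $\chi$ modulo $q$, and use the freedom in $\FA$ to make $|f_q(p)|>1$ on a sparse set of primes. Sums of multiplicative functions twisted by $\e(an/q)$ reduce, via Gauss sums, to mean values of $f_q\bar\chi$. The target is therefore twofold: the mean value $M_1=\lim x^{-1}\sum_{n\le x}f_q(n)\bar\chi(n)$ should have size $\asymp\sqrt{\varphi(q)/q}$, while $\sum_{n\le x}|f_q(n)|^2\le x$ holds for all $x$. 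The gain over the character itself, whose mean is $\varphi(q)/q$, is what produces $N/\sqrt{\varphi(q)}$ instead of $N/\sqrt q$.

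\emph{Step 1 (reduction to a mean value).} For $f$ supported on $(n,q)=1$, expand
\[
\e(an/q)=\frac1{\varphi(q)}\sum_{\psi\bmod q}\bar\psi(a)\tau(\psi)\psi(n).
\]
Then $S_f(N,a/q)=\varphi(q)^{-1}\sum_\psi\bar\psi(a)\tau(\psi)\sum_{n\le N}f(n)\psi(n)$. Take $f=\bar\chi g$ with $g\ge0$ multiplicative, $g(p)=0$ for $p\mid q$, and $g$ supported on primes in a fixed window $(y,z]$ with $z$ depending only on $q$. Then the $\psi=\chi$ term equals $\chi(a)\tau(\chi)\varphi(q)^{-1}\sum_{n\le N}g(n)$. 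Since $|\tau(\chi)|=\sqrt q$ and $g$ is essentially $1$ beyond $z$, this term is $\asymp (N\sqrt q/\varphi(q))\,M_1(g)$ for $N$ large in terms of $q$. For $\psi\ne\chi$, the sum $\sum_{n\le N}g(n)\bar\chi\psi(n)$ is $o(N)$ as $N\to\infty$ with $q$ fixed, because $\bar\chi\psi$ is a nonprincipal character and $g$ differs from $1$ only at finitely many primes. So \eqref{eq:counter-order} holds for every $(a,q)=1$ simultaneously, as soon as $M_1(g)\asymp\sqrt{\varphi(q)/q}$. The conditions $L\ge q$, $q\le N/L^2$ and $N/L\le N/q$ are then automatic for $N$ large.

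\emph{Step 2 (choosing $g$).} Both mean values are Euler products:
\[
M_1=\prod_{p\mid q}(1-\tfrac1p)\prod_{p\in\PP}\Big(1+\tfrac{A-1}p\Big),\qquad
M_2=\prod_{p\mid q}(1-\tfrac1p)\prod_{p\in\PP}\Big(1+\tfrac{A^2-1}p\Big),
\]
up to factors $1+O(1/p)$ from higher prime powers. Here $\PP$ is the set of primes with $g(p)=A$. One must secure $M_2<1$ together with the pointwise statement $\sum_{n\le x}|f_q(n)|^2\le x$ for all $x$; the latter is handled by keeping $\PP$ above $y$ and checking small $x$ directly. However, the optimisation of $\sum_{p\in\PP}1/p$ shows this pure coprime construction gives only $M_1/\sqrt{M_2}\approx(\varphi(q)/q)^{A/(A+1)}$, short of $\sqrt{\varphi(q)/q}$. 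This is the main obstacle. To overcome it, I would also use the values $f_q(p^k)$ at primes $p\mid q$, with $|f_q(p^k)|\le A$ as \eqref{eq:counter-summary} allows. Then $n=dm$ with $d\mid q^\infty$ contributes Gauss sums of characters modulo $q/(d,q)$.

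The plan is to choose $f_q(p)=c_p\,\bar\chi_p(\cdot)$-compatible constants at $p\mid q$ so that these contributions add coherently to the $\psi=\chi$ main term. This multiplies $M_1$ by a factor $\prod_{p\mid q}(1+\theta_p/p)$ while costing only $\prod_{p\mid q}(1+|c_p|^2/p)$ in $M_2$. Since $A>1$, choosing $\theta_p$ comparable to $A$ at every $p\mid q$ should recover the missing power of $q/\varphi(q)$. I expect the delicate point to be this coherence computation, namely that the local factor at each $p\mid q$ of the twisted sum has a fixed phase independent of $a$. Once it is in place, the remaining estimates for the nonprincipal twists and for $\sum_{n\le x}|f_q|^2$ are routine Euler-product bookkeeping.
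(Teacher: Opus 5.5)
\textbf{There is a genuine gap in Step~2.} The exponent $A/(A+1)$ does not limit the coprime construction. It comes from putting the weight exactly $A$ on $\PP$.

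Suppose $g(p^k)=w$ for $p\in\PP$ and $k\ge1$. The local factors of your two mean values are then exactly $1+(w-1)/p$ and $1+(w^2-1)/p$. Concavity of the logarithm gives $\prod_{p\in\PP}(1+(w-1)/p)\ge\prod_{p\in\PP}(1+(w^2-1)/p)^{1/(w+1)}$. Take the second product $\asymp H_q$, so that the mean square is $\asymp1$. This yields $M_1\gg H_q^{-1/2}\,H_q^{-(w-1)/(2(w+1))}$. For fixed $w>1$, in particular $w=A$, this is your exponent. But the loss is bounded as soon as $w-1\ll1/\log H_q$.

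That is the missing idea: let the weight tend to $1$ with $q$, and make $\PP$ correspondingly large. The paper's construction is as follows.
\begin{itemize}
\item It takes $|f_q(p)|^2=1+\delta$ on $\PP=(y,z]$, with $\delta=1/\log(2H_q)$.
\item It chooses $z$ so that $\Pi_\delta=\prod_{p\in\PP}(1+\delta/p)\asymp H_q$. Hence $\sum_{p\in\PP}1/p\asymp(\log H_q)^2$, so $\PP$ is far from sparse.
\item With $\eta=\sqrt{1+\delta}-1$ and $\Pi_\eta=\prod_{p\in\PP}(1+\eta/p)$, it shows $e^{-1/8}\Pi_\delta^{1/2}\le\Pi_\eta\le\Pi_\delta^{1/2}$.
\item The Chinese remainder theorem over the period $q\prod_{p\in\PP}p$ then gives $|S_{f_q}(N,a/q)|=N\Pi_\eta/\sqrt q$ exactly, for multiples $N$ of that period.
\end{itemize}
The hypothesis $A>1$ is used only to ensure $\sqrt{1+\delta}\le A$. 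Your Step~1 is fine, though periodicity makes it unnecessary.

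\textbf{The remedy you propose at the primes $p\mid q$ cannot work.} Suppose $f_q=\bar\chi g$ on the integers coprime to $q$. Write $n=dm$ with $d\mid q^\infty$, $(m,q)=1$ and $d'=(d,q)>1$. Then $\e(adm/q)=\e(a(d/d')m/(q/d'))$. Expanding in characters $\psi$ modulo $q/d'$ leads to mean values of $g\bar\chi\psi$. Since $\chi$ is primitive modulo $q$, the character $\bar\chi\psi$ is nonprincipal, so these terms contribute only $o(N)$. Nothing adds coherently to the main term, and the values $f_q(p^k)$ with $p\mid q$ only enlarge the mean square.

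Abandoning $\chi$ at some $p\mid q$ in favour of the principal character does not help either. It gives a local factor of modulus at most $(A+1)/p$ in place of $p^{-1/2}$. That is an improvement only for $p\ll A^2$, so the total gain is bounded. Your weight-$A$ scheme would instead need a gain of $H_q^{(A-1)/(2(A+1))}\to\infty$.

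\textbf{A smaller point.} The condition $\sum_{n\le x}|f_q(n)|^2\le x$ for every $x$ is not settled by the asymptotic mean value together with small $x$. You need an upper bound uniform in $x>y$, such as \eqref{eq:Shiu}. Its absolute constant $C_1$ is why the paper stops $\Pi_\delta$ between $H_q/(2C_1)$ and $H_q/C_1$.
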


Thus neither \eqref{eq:folklore} nor the bound expected in \cite{BBK}
holds uniformly in $\FA$ for any fixed $A>1$, and the factor
$\sqrt{\log\log(3R)}$ in \cite[Corollary~1.1]{Robles} cannot be removed
in that class. The
functions $f_q$ have modulus greater than $1$ at some primes. The
extension of Theorems~\ref{thm:global}-\ref{thm:refined} to
$\mathcal F_1$ is not addressed here; see Section~\ref{sec:conclusion}.

\subsection{Outline of the proofs}\label{sec:ideas}

We describe the proof of Theorem~\ref{thm:aux}. The starting point is
the argument of \cite{Robles}. The identity of Montgomery and Vaughan
\[
 L\,S_f(N,\alpha)=\sum_{p\le N}f(p)S_f(N/p,p\alpha)\log p+O_A(N)
\]
reduces the problem to sums over primes. We may assume $r<L^3$, since
otherwise \eqref{eq:MV-intro} suffices. After the small primes are
removed as in \cite{Robles}, it remains to bound
\begin{equation}\label{eq:TP-intro}
 T(P)=\sum_{P<p\le2P}f(p)S_f(N/p,p\alpha)\log p
\end{equation}
for dyadic $P\ge16(rB)^2$. For such $P$, the Cauchy-Schwarz inequality
and the Brun-Titchmarsh inequality \cite[Theorem~2]{MV3} for the primes
in intervals of length about $P/B$ and in the reduced residue classes
modulo $r$ lead,
with $U=\lfloor N/P\rfloor$ and $T=4B$, to the mean square
\begin{equation}\label{eq:energy-intro}
 \EE(U,r,T)=\frac1{\varphi(r)}
 \sum_{\substack{b\bmod r\\(b,r)=1}}\int_{-T/U}^{T/U}
 \max_{k\le U}\bigg|\sum_{n\le k}f(n)\e\big(n(b/r+\theta)\big)\bigg|^2\,d\theta.
\end{equation}
In \cite{Robles} the sum over $b$ is extended to all residues modulo
$r$, and the Carleson-Hunt inequality \cite{C,H} then gives
$\EE(U,r,T)\ll_AU/\varphi(r)$ when $U\ge rT$. This step uses no
property of $f$ other than its mean square, and for arbitrary
coefficients of modulus $1$ it cannot be improved, as the coefficients
$\e(-n/r)$ show. It is the source of the totient in \eqref{eq:R-intro}.

The new ingredient is the bound
\begin{equation}\label{eq:energy-goal}
 \EE(U,r,T)\ll A^2\frac Ur
\end{equation}
for $f\in\BA$, when $r$ and $T$ are at most suitable powers of
$\log U$; see Propositions~\ref{prop:energy-small}
and~\ref{prop:energy-large}.
When \eqref{eq:energy-goal} holds, $T(P)\ll_AN/\sqrt{rB}$. To prove \eqref{eq:energy-goal} we expand in
Dirichlet characters modulo $r$ in the variable $b$. By orthogonality,
the mean over $b$ of
$|\sum_{n\le y}f(n)\e(n(b/r+\theta))|^2$ is
$\sum_{\chi\bmod r}|C_\chi(y,\theta)|^2$, where
\[
 C_\chi(y,\theta)=\sum_{n\le y}f(n)\Gamma_\chi(n;r)\e(n\theta),
 \qquad
 \Gamma_\chi(n;r)=\frac1{\varphi(r)}
 \sum_{\substack{b\bmod r\\(b,r)=1}}\chi(b)\e(bn/r).
\]
Write $n=sm$, where every prime factor of $s$ divides $r$ and
$(m,r)=1$. Then $\Gamma_\chi(sm;r)=\overline{\chi(m)}\Gamma_\chi(s;r)$
and $f(sm)=f(s)f(m)$, so that
\begin{equation}\label{eq:smooth-intro}
 C_\chi(y,0)=\sum_{\substack{s\le y\\p\mid s\Rightarrow p\mid r}}
 f(s)\Gamma_\chi(s;r)\sum_{m\le y/s}f(m)\overline{\chi(m)}.
\end{equation}
The Gauss sums $\Gamma_\chi(s;r)$ can be evaluated, and the sum of
$\max_\chi|\Gamma_\chi(s;r)|/s$ over these $s$ is
$\ll_\nu r^{-1/2+\nu}$ for every $\nu>0$. Together with the estimates
for character sums described next, this gives $r$ in place of
$\varphi(r)$.

The inner sums in \eqref{eq:smooth-intro} are character sums of $f$.
By the results of Granville, Harper and Soundararajan \cite{GHS}
around Hal\'asz's theorem, recalled in
Propositions~\ref{prop:GHS-long} and~\ref{prop:GHS-short-input}, they
are small outside a bounded set of
exceptional characters, in mean square over $\chi$ and, in short
intervals, for each $\chi$. The exceptional set is the same for all
the sums that occur. These results are applied to a completely
multiplicative function $g$ with $f=g*h$, where $h$ is small on
average (Lemma~\ref{lem:companion}). The saving is a power of $\log U$, and this restricts $r$ and
$T$ in \eqref{eq:energy-goal}. For an exceptional character we use
$\sum_{v\bmod r}|\Gamma_\chi(v;r)|\le(4/\sqrt6)\sqrt r$
(Lemma~\ref{lem:Gauss-mass}) and the Carleson-Hunt inequality, which
give a contribution $O(A^2U/r)$;
primitive characters show that this order is attained.

The integral over $\theta$ is treated in two ways. For
$T\le(\log U)^{1/4}$, the mean-square estimate at $\theta=0$ is
extended to $|\theta|\le T/U$ by partial summation. For
$T\le(\log U)^8$, we use the short-interval estimate and Gallagher's
lemma \cite[Lemma~1]{G}, in the form of Lemma~\ref{lem:Gallagher}. In
both cases the maximum over $k$ is first
restricted to about $(r/\varphi(r))T$ equally spaced points. The sums
between consecutive points are treated as in \cite{Robles}; they are
short enough for the factor $r/\varphi(r)$ to be absorbed.

The bound \eqref{eq:energy-goal} is available only when $\log U$
exceeds fixed powers of $r$ and $B$. For the remaining blocks $P<p\le2P$ with $P\le N/r$
we use the estimate
\[
 T(P)\ll_A\frac N{\sqrt{\varphi(r)B}}+\sqrt{\frac{rNP}{\varphi(r)}}
\]
of \cite[(6.10)]{Robles}. Their number is $O(1+(rB)^{1/2-c})$ for an
absolute constant $c>0$, so the first terms have total $O_A(N)$ in
spite of the factor $\sqrt{r/\varphi(r)}$. The second terms form a
geometric sum of size $O_A(N/\sqrt{\varphi(r)})$. The blocks with
$P>N/r$ are treated as in \cite{Robles}. There are $O(L)$ blocks in
all, and Theorem~\ref{thm:aux} follows on dividing by
$L$. The dependence on $A$ comes from the fact that the restriction of
$f\in\BA$ to an initial segment is a linear combination of functions
in $\mathcal B_1$ whose coefficients have absolute values summing to at
most $A$ (Lemma~\ref{lem:linear-A}). Theorems~\ref{thm:global} and~\ref{thm:refined} follow from
Theorem~\ref{thm:aux} by comparing $a/q$ with the fraction given by
Dirichlet's theorem, as in \cite{Robles}.

For Theorem~\ref{thm:sharpness-summary}, let $\chi$ be a primitive
character modulo a product $q$ of consecutive odd primes. The integers
coprime to $q$ have density $\varphi(q)/q$, so the mean square of
$\chi$ is $\varphi(q)/q$. We take
$f_q(n)=\chi(n)(1+\delta)^{\omega_{\PP}(n)/2}$, where $\delta>0$ is
small and $\omega_{\PP}(n)$ is the number of distinct prime factors of
$n$ in a set $\PP$ of primes not dividing $q$. The mean-value bound in
Proposition~\ref{prop:GHS-short-input}(iii) allows $\PP$ to be chosen so that
$\sum_{n\le x}|f_q(n)|^2\le x$ for every $x\ge1$, while the mean value
of $(1+\delta)^{\omega_{\PP}(n)/2}$ has order $\sqrt{q/\varphi(q)}$.
The Chinese remainder theorem then shows that $|S_{f_q}(N,a/q)|$
exceeds $|S_\chi(N,a/q)|\asymp N/\sqrt q$ by this factor.

\subsection{Notation}\label{sec:notation}

Sums with real upper limits run over positive integers up to their
floor; upper limits below $1$ give empty sums. The letter $p$ denotes a
prime, a dyadic number is a power of two, and $\ind_{\mathcal E}$ is $1$
when the indicated condition holds and $0$ otherwise. A star on a sum
over $b\bmod r$ restricts it to $(b,r)=1$. Characters are Dirichlet
characters modulo the stated modulus, including imprimitive characters.
Implied constants without subscripts are absolute; subscripts indicate
the permitted dependence. For positive quantities, $X\asymp Y$ means
$X\ll Y\ll X$, and $X\sim Y$ means $X/Y\to1$ in the limit considered.

\subsection{Organization of the paper}\label{sec:organization}

Section~\ref{sec:inputs} collects the imported estimates and the new
lemmas: the results of \cite{GHS} and \cite{Robles} that we use, the
reductions to completely multiplicative functions and to $A=1$, the
Gauss sums $\Gamma_\chi(n;r)$, and the passage from equally spaced
points to the maximum. Section~\ref{sec:energy} proves the bound
\eqref{eq:energy-goal} in the two ranges of $T$.
Section~\ref{sec:proofs} contains the proofs of
Theorems~\ref{thm:global}-\ref{thm:refined} and of the bound with
logarithmic weights. Section~\ref{sec:sharpness} gives the examples
showing that these results cannot be improved, and examples on
refinements proposed in the literature,
Section~\ref{sec:MV-counter} proves
Theorem~\ref{thm:sharpness-summary}, and Section~\ref{sec:conclusion}
states some open questions.

\section{Technical lemmas}\label{sec:inputs}

For complex coefficients $c_n$, a multiplicative function $f$, a
Dirichlet character $\chi$ and real $U,x\ge1$, write
\begin{equation}\label{eq:M-def}
 M_c(U;\theta)=\max_{0\le k\le U}\bigg|\sum_{n\le k}c_n\e(n\theta)\bigg|,
 \qquad A_f(x,\chi)=\sum_{n\le x}f(n)\overline{\chi(n)}.
\end{equation}
The maximum is over integers $k$. When $c_n=f(n)$ we write
$M_f(U;\theta)$.

\subsection{Imported estimates}\label{sec:imported}

For a multiplicative function $u$, let $\Lambda_u$ be defined by
\[
 -\frac{F_u'(s)}{F_u(s)}=\sum_{n\ge1}\frac{\Lambda_u(n)}{n^s},
 \qquad F_u(s)=\sum_{n\ge1}\frac{u(n)}{n^s},
\]
in a half-plane of absolute convergence. As in
\cite[Section~1]{GHS}, the class $\mathcal C(\kappa)$ consists of the
multiplicative functions $u$ for which $|\Lambda_u(n)|\le\kappa\Lambda(n)$
for every $n$. A completely multiplicative function in $\mathcal B_1$,
and each of its twists by a Dirichlet character, belongs to
$\mathcal C(1)$.

The first result is a case of the pretentious large sieve
\cite[Theorem~1.9]{GHS}. It bounds character sums in mean square, with
a set of exceptional characters that does not depend on the length of
the sum.

\begin{proposition}\label{prop:GHS-long}
Let $U$ be sufficiently large, $\ell=\log U$, $r\le\ell^4$, and let $g$
be completely multiplicative with $|g(n)|\le1$. There is a set $\Eset$
of at most $63$ characters modulo $r$ such that, for every $v$ with
$U^{1/4}\le v\le U$,
\begin{equation}\label{eq:GHS-long}
 \bigg(\sum_{\chi\notin\Eset}|A_g(v,\chi)|^2\bigg)^{1/2}
 \ll v\ell^{-7/8}(\log\ell)^{15/8}.
\end{equation}
\end{proposition}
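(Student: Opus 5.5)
This will be deduced from the pretentious large sieve \cite[Theorem~1.9]{GHS}, specialised to characters modulo $r$. That theorem says the following. Let $\Xi$ be a set of characters of modulus $\le Q$, and let $g\in\mathcal C(1)$. Order the characters $\chi_1,\chi_2,\dots$ so that $\max_{|t|\le\ell}|F_{g\bar\chi}(1+1/\ell+it)|$ is non-increasing. Then, for $x\ge x_0$ and $J\ge1$, the characters $\chi_j$ with $j>J$ satisfy
\[
\sum_{j>J}|A_g(x,\chi_j)|^2\ll x^2\,\ell^{-2+2/J}(\log\ell)^{c}
\]
for some explicit $c$, uniformly when $Q$ is a small power of $\ell$.

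The plan is to apply this with $\Xi$ the full group of characters modulo $r$. We choose $J$ so that $2-2/J\ge 7/4$; taking $J=63$ comfortably gives $\ell^{-7/8}$ after the square root, with the $(\log\ell)^{15/8}$ loss accounted for by the Halász-type step inside GHS. The essential point to stress is that the ordering, and hence the set $\Eset=\{\chi_1,\dots,\chi_{63}\}$, is defined only through the Dirichlet series $F_{g\bar\chi}$ near $s=1$ at height $|t|\le\ell$ with $\ell=\log U$. This set is therefore independent of $v$. Since $g\bar\chi$ is completely multiplicative and bounded by $1$, it lies in $\mathcal C(1)$, so the GHS hypotheses hold.

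The remaining step is uniformity in $v\in[U^{1/4},U]$. GHS state their bound with $\log x$ in place of $\ell$. For $v\ge U^{1/4}$ we have $\log v\asymp\ell$, so powers of $\log v$ and $\ell$ agree up to constants. We must, however, use the same exceptional set for all $v$. So I would rerun the GHS deduction (Halász via a Perron integral on the line $1+1/\log v$) with the ordering taken at $\sigma=1+1/\ell$. Moving between $1+1/\log v$ and $1+1/\ell$ changes $F$ only by a bounded factor, since $|\log F(1+a+it)-\log F(1+b+it)|\le\sum_p p^{-1}|p^{-a}-p^{-b}|=O(1)$ for $a\asymp b$. The condition $r\le\ell^4$ should ensure the number of characters, $\varphi(r)\le\ell^4$, is small enough for the large-sieve part of their argument. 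That part contributes a factor like $1+\varphi(r)\,x^{-1/2+o(1)}$, which is negligible.

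The hardest part is this bookkeeping. The exceptional set must be fixed from data at scale $\ell$ while the estimate is applied at every $v$, and the exponent $7/8$ and the logarithmic power must come out with $J=63$ exactly as GHS's explicit constants give. If the constants from \cite[Theorem~1.9]{GHS} do not match directly, I would take a larger $J$ and restate the count. Only boundedness of $|\Eset|$ matters downstream, so such a change is harmless.
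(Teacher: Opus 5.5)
Your route is the paper's: apply the pretentious large sieve \cite[Theorems~1.8 and~1.9]{GHS} to all characters modulo $r$ with $J=63$. The argument is sound once the citation is corrected. As written, though, it rests on a misquoted form of that theorem and misplaces where $63$, the factor $(\log\ell)^{15/8}$ and the uniformity in $v$ come from.

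\textbf{The form of the bound.} With $Q=r\log X$, the bound of \cite[Theorem~1.9]{GHS} for $J=63$ is
\[
 v\Big(\frac{\log Q}{\log v}\Big)^{7/8}\log\Big(\frac{\log v}{\log Q}\Big).
\]
The saving is measured by $\log Q/\log v$, not by $1/\ell$. The exponent is $1-1/\sqrt{J+1}$, not $1-1/J$; it comes from the repulsion among $J+1$ characters, as in Lemma~\ref{lem:repulsion}, where the gain is $J^{-1/2}$ for $J$ characters.

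Consequences:
\begin{itemize}
\item $J=63$ is not a comfortable choice but exactly the threshold, since $1-1/\sqrt{64}=7/8$. With your formula $\ell^{-2+2/J}$, $J=8$ would already suffice, which should have signalled that the quoted bound is not the real one. Your fallback of ``a larger $J$'' would also change the statement, which fixes the count at $63$.
\item The factor $(\log\ell)^{15/8}$ is not an unspecified loss from the Hal\'asz step. It comes from $\log Q\le5\log\ell$, which holds because $r\le\ell^4$. Combine this with $\log v\ge\ell/4$ and the fact that $t^{7/8}\log(1/t)$ is increasing for small $t$; this gives $\ell^{-7/8}(\log\ell)^{7/8}\cdot\log\ell$.
\item This is the real role of the hypothesis $r\le\ell^4$: it makes $\log Q\ll\log\ell$, so that there is any power saving in $\ell$ at all. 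If $r$ were a power of $U$, then $\log Q\asymp\ell$ and there would be none. It has nothing to do with the number of characters being small for a large-sieve term.
\end{itemize}

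\textbf{Uniformity in $v$.} The step you call the hardest is not needed. \cite[Theorem~1.8]{GHS} already gives an exceptional set that depends only on the parameter $X$ and is valid for every $x$ with $\sqrt X\le x\le X^2$. Take $X=U^{1/2}$. Then this range is exactly $U^{1/4}\le v\le U$, and the hypothesis $r\le X^{1/4}=U^{1/8}$ of that theorem follows from $r\le\ell^4$.

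So there is no need to rerun the Perron/Hal\'asz argument with the ordering normalised at $1+1/\ell$. Your comparison of the Dirichlet series at $1+1/\log v$ and at $1+1/\ell$ is correct but superfluous. Moreover, your description of the exceptional set as the first $63$ characters ordered by $\max_{|t|\le\ell}|F_{g\bar\chi}(1+1/\ell+it)|$ is your own construction, not the set that \cite{GHS} provides.
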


\begin{proof}
Apply \cite[Theorems~1.8 and~1.9]{GHS} with $X=U^{1/2}$, modulus $r$,
$\kappa=1$ and $J=63$. The requirement $r\le X^{1/4}=U^{1/8}$ follows
from $r\le\ell^4$, and the range $\sqrt X\le x\le X^2$ of those
theorems is $U^{1/4}\le x\le U$. With $Q=r\log X$, the bound of
\cite[Theorem~1.9]{GHS} for the left side of \eqref{eq:GHS-long} is
\[
 v\Big(\frac{\log Q}{\log v}\Big)^{7/8}
 \log\Big(\frac{\log v}{\log Q}\Big).
\]
Here $\log Q\le5\log\ell$ and $\log v\ge\ell/4$, and
$t^{7/8}\log(1/t)$ is increasing for small $t>0$, so this is bounded by
the right side of \eqref{eq:GHS-long}. The exceptional set in
\cite[Theorem~1.8]{GHS} is the same for every $v$ in the stated range.
\end{proof}

The next proposition records Hal\'asz's theorem in short intervals, a
bound for $L$-functions with the small primes removed, and a mean-value
estimate.

\begin{proposition}\label{prop:GHS-short-input}
The following estimates hold.
\begin{enumerate}
\item[\textup{(i)}] Let $u\in\mathcal C(1)$, let $x$ be sufficiently
large, $0<\delta<1/4$, and $z=x^{1-\delta}$. Then
\begin{equation}\label{eq:short-halasz}
 \bigg|\sum_{x<n\le x+z}u(n)\bigg|
 \ll z\bigg((1+M)e^{-M}+\frac{\log(x/z)+\log\log x}{\log x}\bigg),
\end{equation}
where $M$ is defined by
\begin{equation}\label{eq:short-M}
 e^{-M}\log x=\max_{|t|\le(x/z)\log x}
 \bigg|\sum_{n\ge1}\frac{u(n)}{n^{1+1/\log x+it}}\bigg|.
\end{equation}
\item[\textup{(ii)}] Let $K\ge1$ be fixed and let $\chi$ be a
nonprincipal character modulo $r$. If $X\ge10$, $y\ge r\log X$,
$\sigma\ge1+1/\log X$ and $|t|\le(\log X)^K$, then
\begin{equation}\label{eq:rough-L}
 |L_y(\sigma+it,\chi)|\ll_K1,
 \qquad L_y(s,\chi)=\prod_{p>y}\big(1-\chi(p)p^{-s}\big)^{-1}.
\end{equation}
\item[\textup{(iii)}] Let $\kappa>0$ be fixed and $u\in\mathcal C(\kappa)$.
If $x$ is sufficiently large in terms of $\kappa$, then
\begin{equation}\label{eq:Shiu}
 \sum_{n\le x}|u(n)|\ll_\kappa\frac x{\log x}
 \exp\bigg(\sum_{p\le x}\frac{|u(p)|}p\bigg).
\end{equation}
\end{enumerate}
\end{proposition}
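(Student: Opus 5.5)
This proposition collects imported results, so the plan is to locate each part in the literature and check that the hypotheses match. No new argument should be needed.

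For (i), I would quote the short-interval version of Hal\'asz's theorem from \cite{GHS}, stated there for $u\in\mathcal C(\kappa)$ with $\kappa=1$. That theorem bounds $\sum_{x<n\le x+z}u(n)$, for $z=x^{1-\delta}$ with $\delta$ small, by $z$ times two terms. The first is $(1+M)e^{-M}$, where $M$ is measured by the maximum of $|F_u(1+1/\log x+it)|$ over $|t|\le (x/z)\log x$. The second is an error of the shape $(\log(x/z)+\log\log x)/\log x$. The work is to match normalizations. I would check three things:
\begin{itemize}
\item the definition of $M$ through $e^{-M}\log x=\max|F_u|$ agrees with \eqref{eq:short-M};
\item the length of the $t$-range is $(x/z)\log x$;
\item the restriction $0<\delta<1/4$ lies inside the admissible range.
\end{itemize}
If the source states the result with an upper limit $T$ for $t$, I would substitute $T=(x/z)\log x$. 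Since the bound is monotone in $M$, I would also allow $M$ to be replaced by the defined quantity.

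For (ii), the plan is to write $\log L_y(s,\chi)=\sum_{p>y}\chi(p)p^{-s}+O(1/y)$ and bound the prime sum uniformly. I would split it at $p=\exp((\log X)^{K+1})$ or a similar height.
\begin{itemize}
\item In the range $y<p\le$ this height, I would use partial summation with the prime number theorem for primes in progressions modulo $r$, together with the zero-free region. Because $y\ge r\log X$ and $|t|\le(\log X)^K$, the character sum has enough cancellation to give $O_K(1)$. A possible Siegel zero is harmless here, because its effect at $\sigma\ge1+1/\log X$ stays bounded.
\item Beyond that height, I would bound the tail trivially using $\sigma\ge1+1/\log X$.
\end{itemize}
I expect this to be available as a lemma in \cite{GHS} (their estimate for $L$-functions with small primes removed), and I would cite it after checking that the conditions on $y$ and $t$ coincide. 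This is the main point to verify: that uniformity in $r$ holds with only $y\ge r\log X$. If Siegel zeros cause trouble, I would fall back on the Brun--Titchmarsh inequality plus the large sieve on average, but citing the lemma is the intended route.

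For (iii), I would use the standard Shiu/Hall--Tenenbaum mean-value bound for nonnegative multiplicative functions $|u|$. The condition $|\Lambda_u|\le\kappa\Lambda$ gives $|u(p^k)|\le\binom{\kappa+k-1}{k}$, which is bounded polynomially in $k$. That yields the sublinear prime-power growth the Hall--Tenenbaum theorem requires, and the stated bound follows directly, with constants depending only on $\kappa$.
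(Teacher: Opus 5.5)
Your plan matches the paper's. Parts (i) and (ii) are quoted as \cite[Corollary~1.4]{GHS} with $\kappa=1$ and as \cite[Lemma~5.1]{GHS}. Part (iii) comes from $|u(p^k)|\le d_\kappa(p^k)=\binom{\kappa+k-1}{k}$ together with a standard upper bound for nonnegative multiplicative functions: the paper uses \cite[Theorem~2.14 and Corollary~2.15]{MV2}, after checking $\sum_{p\le t}|u(p)|\log p\ll_\kappa t$ and $\sum_p\sum_{j\ge2}j|u(p^j)|\log p/p^j<\infty$, where you use Hall--Tenenbaum, and either works.

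Do not rely on your fallback sketch for (ii). The zero-free region gives no cancellation, uniformly in $r$, for primes just above $r\log X$; one would instead bound the range $y<p\le y^{C}$ trivially and look for cancellation only beyond it. Since you intend to cite the lemma, this does not affect the proof.
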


\begin{proof}
Part~(i) is \cite[Corollary~1.4]{GHS} with $\kappa=1$, and part~(ii) is
\cite[Lemma~5.1]{GHS}. For part~(iii), put $g(n)=|u(n)|$. By
\cite[Section~1]{GHS}, $g(n)\le d_\kappa(n)$, where $d_\kappa(n)$ is
the coefficient of $n^{-s}$ in $\zeta(s)^\kappa$, so that
$g(p^j)\le d_\kappa(p^j)=\binom{\kappa+j-1}{j}$.
Differentiating $\sum_{j\ge0}d_\kappa(p^j)z^j=(1-z)^{-\kappa}$ gives
\[
 \sum_{j\ge2}\frac{j g(p^j)}{p^j}
 \le\frac\kappa p\Big((1-p^{-1})^{-\kappa-1}-1\Big)
 \ll_\kappa p^{-2}.
\]
Since $g(p)\le\kappa$, the Chebyshev bound
\cite[Corollary~2.6]{MV2} now yields
\[
 \sum_{p\le t}g(p)\log p\ll_\kappa t\quad(t\ge1),\qquad
 \sum_p\sum_{j\ge2}\frac{j g(p^j)\log p}{p^j}
 \ll_\kappa\sum_p\frac{\log p}{p^2}<\infty.
\]
Thus \cite[Theorem~2.14 and Corollary~2.15]{MV2}, applied to the
nonnegative multiplicative function $g$, give, for $x\ge2$,
\[
 \sum_{n\le x}g(n)
 \ll_\kappa\frac x{\log x}
 \prod_{p\le x}\Big(1+\sum_{j\ge1}\frac{g(p^j)}{p^j}\Big)
 \ll_\kappa\frac x{\log x}
 \exp\bigg(\sum_{p\le x}\frac{g(p)}p\bigg),
\]
where the last step uses $\log(1+t)\le t$ for $t\ge0$ and the first
prime-power bound. For short intervals and arithmetic progressions,
see \cite[Lemma~2.3]{GHS} and Shiu's theorem \cite[Theorem~1]{S}.
\end{proof}

We also use the Carleson-Hunt inequality \cite{C,H} and its
consequence for sums over residue classes, in the forms given in
\cite{Robles}.

\begin{proposition}\label{prop:sampling}
Let $c=(c_n)$ be a sequence of complex numbers supported on at most $V$
consecutive integers, put $E=\sum_n|c_n|^2$, and let
$M_c(\theta)$ be the maximum over integers $k$ of
$|\sum_{n\le k}c_n\e(n\theta)|$. Then
\begin{align}
 \int_0^1M_c(\theta)^2\,d\theta&\ll E,\label{eq:CH}\\
 \sum_{b\bmod r}M_c(\theta+b/r)^2&\ll(V+r)E\label{eq:fullsample}
\end{align}
for every integer $r\ge1$ and every real $\theta$. The implied
constants are absolute.
\end{proposition}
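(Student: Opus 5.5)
The statement is Proposition~\ref{prop:sampling}. Its first part \eqref{eq:CH} is the Carleson--Hunt maximal inequality for trigonometric polynomials, and the second part \eqref{eq:fullsample} will be deduced from it by a discretisation argument of Gallagher type.

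\emph{Proof of \eqref{eq:CH}.} Assume the support is $\{n_0+1,\dots,n_0+V\}$. Multiplying $c_n$ by $\e(-n_0\theta)$ does not change $M_c(\theta)$, so we may take $n_0=0$. The partial sum $\sum_{n\le k}c_n\e(n\theta)$ is then a partial Fourier sum of the $L^2(\mathbb{R}/\mathbb{Z})$ function $F(\theta)=\sum_n c_n\e(n\theta)$, taken over the frequency range $(-\infty,k]$. One-sided partial sums are controlled by symmetric ones: they differ by a modulation $\e(m\theta)$ applied to $F$, and modulation preserves the $L^2$ norm. Hence the Carleson--Hunt inequality $\|\sup_K|S_KF|\|_2\ll\|F\|_2$, together with Parseval's identity, gives $\int_0^1M_c^2\ll E$.

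\emph{Proof of \eqref{eq:fullsample}.} The plan is to compare the discrete sum over $b$ with an integral over short arcs.
\begin{enumerate}
\item[(a)] For each fixed $k$, put $S_k(\phi)=\sum_{n\le k}c_n\e(n\phi)$, a polynomial supported on at most $V$ consecutive frequencies. Its derivative, after removing the centre frequency (which only multiplies by a unimodular factor), has size $\ll V|S'|$-type.
\item[(b)] Apply the Gallagher/Sobolev inequality on an interval $I_b$ of length $\delta=1/r$ around $\theta+b/r$:
\[
|g(x)|^2\le \delta^{-1}\int_{I_b}|g|^2+\int_{I_b}|g\,g'|.
\]
The obstacle is that the maximum over $k$ sits inside. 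I would handle this with the pointwise bound
\[
M_c(x)^2\le \delta^{-1}\int_{I_b}M_c^2+2\int_{I_b}M_c(\phi)\,\widetilde M(\phi)\,d\phi,
\]
where $\widetilde M$ is the maximal function of the derivative sequence $2\pi i n c_n$, with the centre frequency removed.
\item[(c)] Summing over $b$, the arcs $I_b$ tile the circle. The first term gives $r\int_0^1M_c^2\ll rE$ by \eqref{eq:CH}. For the second, apply Cauchy--Schwarz and then \eqref{eq:CH} to the sequence $n c_n$, whose $\ell^2$ mass is $\ll V^2E$ after centring. This gives $\ll\sqrt{E}\cdot VE^{1/2}=VE$.
\end{enumerate}
The total is $\ll(V+r)E$.

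The subtle point is step (b): the maximising $k$ depends on $x$, so the derivative argument has to be run on each fixed $S_k$ before taking the maximum. Concretely, for each fixed $k$,
\[
|S_k(x)|^2\le\delta^{-1}\int_{I_b}|S_k|^2+2\int_{I_b}|S_k|\,|S_k'|,
\]
and the right side is bounded by the same expression with $M_c$ and $\widetilde M$ in place of $|S_k|$ and $|S_k'|$. Since that bound is uniform in $k$, taking the maximum over $k$ on the left is legitimate.

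Centring requires care. The centre frequency $n_0+V/2$ introduces a factor $\e(n_0\phi)$, whose derivative is not small. However, $|S_k|$ is unchanged by this factor, so it suffices to differentiate the centred polynomial $\sum_{n\le k}c_n\e((n-n_1)\phi)$. Its coefficients $(n-n_1)c_n$ are of size $\le V|c_n|$, and \eqref{eq:CH} applies to them as well.
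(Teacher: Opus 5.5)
Your argument is correct, but it proves more than the paper does: the paper does not reprove either inequality. For support in $\{1,\dots,V\}$ it simply cites \cite[Proposition~2.3 and (4.4)]{Robles}. It then reduces the general case using the same observation you make at the start of your proof of \eqref{eq:CH}: shifting the support multiplies every partial sum by a common unimodular factor, so $M_c$ is unchanged.

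What you have written is a self-contained proof of the cited statements. For \eqref{eq:CH} you use Carleson--Hunt. For \eqref{eq:fullsample} you apply the Sobolev bound $|g(x)|^2\le r\int_I|g|^2+2\int_I|g||g'|$ on arcs of length $1/r$ to each fixed partial sum before maximizing over $k$. You then finish with Cauchy--Schwarz and \eqref{eq:CH} applied to the differentiated coefficients. This is the same kind of Sobolev inequality that the paper quotes from \cite[Lemma~4.4]{Robles} in the proof of Lemma~\ref{lem:new-block}. The paper's route buys brevity, and yours buys independence from the reference.

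Two simplifications are available. First, the shift leaves $M_c(\phi)$ unchanged at every $\phi$, so it also leaves the left side of \eqref{eq:fullsample} unchanged. You may therefore assume the support lies in $\{1,\dots,V\}$ for both parts and use $|2\pi nc_n|\le2\pi V|c_n|$ directly, which makes the centring discussion unnecessary. Second, with that support the one-sided sums $\sum_{n\le k}c_n\e(n\phi)$ are literally the symmetric Fourier partial sums of $F$, since $F$ has no negative frequencies. So no modulation remark is needed to invoke Carleson--Hunt.

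One small slip: the inequality displayed in your step (b) lacks the factor $2$ coming from $|(|g|^2)'|\le2|g||g'|$. You restore it in the later display.
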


\begin{proof}
When the support is contained in $\{1,\ldots,V\}$, these are
\cite[Proposition~2.3 and (4.4)]{Robles}. A shift of the support
multiplies every sum $\sum_{n\le k}c_n\e(n\theta)$ by the same factor of
modulus $1$, and so does not change $M_c$.
\end{proof}

If $c$ is supported on $1\le n\le U$, then $M_c(\theta)=M_c(U;\theta)$.

The next lemma is Gallagher's lemma \cite[Lemma~1]{G} for coefficients
in a Hilbert space. The proof is that of the scalar case; we include it
because it is short.

\begin{lemma}\label{lem:Gallagher}
Let $(a_n)$ be a finitely supported sequence in a complex Hilbert
space. For $\delta>0$ and $h_0=1/(2\delta)$,
\begin{equation}\label{eq:Gallagher}
 \int_{-\delta}^{\delta}\bigg\|\sum_na_n\e(n\theta)\bigg\|^2\,d\theta
 \ll h_0^{-2}\int_{\R}\bigg\|\sum_{t<n\le t+h_0}a_n\bigg\|^2\,dt.
\end{equation}
\end{lemma}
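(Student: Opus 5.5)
The plan is the standard proof of Gallagher's lemma, with absolute values replaced by Hilbert-space norms. Write $F(\theta)=\sum_na_n\e(n\theta)$, a vector-valued trigonometric polynomial, and put
\[
 G(t)=\sum_{t<n\le t+h_0}a_n .
\]
We use the Fej\'er-type kernel $K(\theta)=\big(\sin(\pi h_0\theta)/(\pi\theta)\big)^2$. It is the Fourier transform of the triangle function $h_0\,\Delta(x/h_0)$, where $\Delta(x)=\max\{0,1-|x|\}$. The key pointwise fact is
\[
 K(\theta)\gg h_0^2\qquad\text{for }|\theta|\le\delta=1/(2h_0).
\]
Indeed $\pi h_0|\theta|\le\pi/2$ there, and $\sin u\ge(2/\pi)u$ on $[0,\pi/2]$, so $K(\theta)\ge 4h_0^2/\pi^2$. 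Consequently
\[
 \int_{-\delta}^{\delta}\|F(\theta)\|^2\,d\theta\le\frac{\pi^2}{4}h_0^{-2}\int_{\R}\|F(\theta)\|^2K(\theta)\,d\theta ,
\]
and it remains to show that the last integral equals $\int_{\R}\|G(t)\|^2\,dt$.

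For this, compute the Fourier transform of $G$ formally. Since $G(t)=\sum_na_n\ind_{[n-h_0,n)}(t)$, one has
\[
 \hat G(\theta)=\int_{\R}G(t)\e(-t\theta)\,dt=\sum_na_n\int_{n-h_0}^{n}\e(-t\theta)\,dt
 =\overline{F}\text{-type expression}\times\frac{1-\e(h_0\theta)}{2\pi i\theta}\cdot(\text{unimodular factor}).
\]
More concretely, the inner integral equals $\e(-n\theta)\,\e(h_0\theta/2)\,\sin(\pi h_0\theta)/(\pi\theta)$. Therefore
\[
 \|\hat G(\theta)\|^2=\Big\|\sum_na_n\e(-n\theta)\Big\|^2K(\theta)=\|F(-\theta)\|^2K(\theta).
\]
Plancherel's theorem for $L^2(\R;H)$ then gives $\int\|G\|^2=\int\|F(\theta)\|^2K(\theta)\,d\theta$, using the evenness of $K$. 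Plancherel holds for Hilbert-space-valued functions, either by the same proof or by expanding in an orthonormal basis of the finite-dimensional span of the $a_n$ and summing the scalar identities. Here $G$ is a finite combination of indicator functions of intervals, so it lies in $L^1\cap L^2$ and there is no convergence issue.

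Combining the two displays gives \eqref{eq:Gallagher} with the constant $\pi^2/4$. No step is a real obstacle. The only point needing care is to justify vector-valued Plancherel, and the cleanest way is the reduction to coordinates: the $a_n$ span a finite-dimensional subspace, both sides of the identity are sums over an orthonormal basis of the corresponding scalar quantities, and the scalar identity is the classical one.
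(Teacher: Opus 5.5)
Your proposal is correct and is essentially the paper's proof: the same step function $\sum_n a_n\ind_{[n-h_0,n)}(t)$, the same Fourier transform $F(-\theta)$ times a factor $\sin(\pi h_0\theta)/(\pi\theta)$ bounded below by $2h_0/\pi$ on $|\theta|\le\delta$, and Plancherel applied coordinatewise in an orthonormal basis of the span of the $a_n$. The only blemish is the placeholder display with the ``$\overline{F}$-type expression''. Drop it, since your next line correctly gives $\e(-n\theta)\e(h_0\theta/2)\sin(\pi h_0\theta)/(\pi\theta)$ and hence $F(-\theta)$ rather than $\overline{F}$.
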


\begin{proof}
Let $\Phi(t)=\sum_na_n\ind_{[n-h_0,n)}(t)$, so that
$\Phi(t)=\sum_{t<n\le t+h_0}a_n$. The Fourier transform of $\Phi$ at
$\theta$ is the product of $\sum_na_n\e(-n\theta)$ and a factor of
modulus $|\sin(\pi h_0\theta)|/(\pi|\theta|)$, which is at least
$2h_0/\pi$ for $|\theta|\le1/(2h_0)$. Plancherel's theorem, applied to
the coordinates of $\Phi$ in an orthonormal basis of the span of the
$a_n$, gives \eqref{eq:Gallagher}.
\end{proof}

Finally we record what is used from \cite{Robles}. Recall the sums
$T(P)$ from \eqref{eq:TP-intro}.

\begin{proposition}\label{prop:old-blocks}
Let $f\in\FA$, suppose that \eqref{eq:aux-hyp} holds, and put $D=rB$.
\begin{enumerate}
\item[\textup{(i)}] If $r\ge L^3$, then $S_f(N,\alpha)\ll_AN/L$.
\item[\textup{(ii)}] Suppose that $r<L^3$ and that $N$ is sufficiently
large. Then $D\le L^3$. Let $Y$ be the least power of $2$ with
$Y\ge16D^2$. Then
\begin{equation}\label{eq:reduction}
 L\,S_f(N,\alpha)=\sum_{\substack{Y\le P<N\\P\textup{ dyadic}}}T(P)+O_A(N),
\end{equation}
and for every dyadic $P$ with $Y\le P<N$, writing
$U=\lfloor N/P\rfloor$,
\begin{align}
 |T(P)|&\ll_A\frac N{\sqrt{\varphi(r)B}}
 +\sqrt{\frac{rNP}{\varphi(r)}},\label{eq:old-block}\\
 |T(P)|&\ll_AN\sqrt{\frac{\log\log(3U)}U}
 \qquad(U\le r).\label{eq:old-short}
\end{align}
\item[\textup{(iii)}] Under the hypotheses of \textup{(ii)}, let $P$ be
dyadic with $Y\le P<N$, let $U=\lfloor N/P\rfloor$ and
$h=P/\lceil B\rceil$, and let $c_1,\ldots,c_U$ be complex numbers. Put
$d_n=nc_n$. Then $h|\beta|U\le1$ and
\begin{align}
 \sum_{P<p\le2P}M_c(U;\alpha p)^2\log p
 \ll\frac1{\varphi(r)}\sum_{b\bmod r}^{*}\int_P^{2P}
 \big(&M_c(U;ab/r+\beta t)^2\notag\\
 &+h^2|\beta|^2M_d(U;ab/r+\beta t)^2\big)\,dt,\label{eq:positive-transfer}
\end{align}
with an absolute implied constant.
\end{enumerate}
\end{proposition}

\begin{proof}
(i) For bounded $N$ the assertion follows from
$|S_f(N,\alpha)|\le AN$, which holds by the Cauchy-Schwarz inequality
and \eqref{eq:FA}. For large $N$ we have $L^3\le r\le N/L^3$ and
$|\beta|\le1/(rQ_0)\le r^{-2}$, so \eqref{eq:MV-intro} with $R=L^3$
gives
\[
 S_f(N,\alpha)\ll_A\frac NL+\frac{N(3\log L)^{3/2}}{L^{3/2}}
 \ll_A\frac NL.
\]
(ii) If $B=1$, then $D=r<L^3$; otherwise
$D=rN|\beta|\le N/Q_0=L^3$. The identity \eqref{eq:reduction} is
\cite[(7.2)]{Robles}; it is obtained in the proof of
\cite[Theorem~1.1]{Robles} from \cite[Lemmas~3.1 and~3.3]{Robles}. The
estimates \eqref{eq:old-block} and \eqref{eq:old-short} are
\cite[(6.10) and (6.11)]{Robles}; they follow from
\cite[Proposition~6.2]{Robles} and hold for $P\ge16D^2$.

(iii) The inequality \eqref{eq:positive-transfer} is
\cite[(6.6)]{Robles}, and $h|\beta|U\le1$ is part of
\cite[(6.4)]{Robles}. Both are obtained in the proof of
\cite[Proposition~6.1]{Robles}, for arbitrary complex numbers $c_n$
and $P\ge16D^2$.
\end{proof}

\subsection{Two reductions}\label{sec:reductions}

The first lemma replaces $f$ by a completely multiplicative function,
to which the results of \cite{GHS} apply.

\begin{lemma}\label{lem:companion}
If $f\in\BA$, there are a completely multiplicative function $g$ with
$|g(n)|\le1$ and a multiplicative function $h$ such that $f=g*h$ and
\begin{equation}\label{eq:h-norm}
 \sum_{d\ge1}\frac{|h(d)|}{d^\sigma}\ll_{A,\sigma}1\qquad(\sigma>1/2).
\end{equation}
In particular, for $x\ge1$,
\begin{equation}\label{eq:h-tail}
 \sum_{d>x}\frac{|h(d)|}d\ll_Ax^{-1/4},\qquad
 \sum_{d\le x}|h(d)|\ll_Ax^{3/4}.
\end{equation}
\end{lemma}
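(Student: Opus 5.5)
The construction of $g$ goes prime by prime, and then \eqref{eq:h-norm} reduces to an Euler product estimate. Define $g$ completely multiplicative by $g(p)=f(p)$ when $|f(p)|\le1$ and $g(p)=0$ when $|f(p)|>1$. Then $|g(n)|\le1$. The natural choice $g(p)=f(p)$ fails for $|f(p)|>1$, which is why $g(p)$ must be truncated.

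Put $h=f*g^{-1}$. Since $g$ is completely multiplicative, $g^{-1}=\mu g$, and $h$ is multiplicative with
\[
 h(p^k)=f(p^k)-g(p)f(p^{k-1})\qquad(k\ge1).
\]
In particular $h(p)=0$ whenever $|f(p)|\le1$. In every case $|h(p)|\le A$ and $|h(p^k)|\le 2A$ for $k\ge1$.

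The key point is that $h$ vanishes at primes with $|f(p)|\le 1$, while primes with $|f(p)|>1$ must satisfy $p$ small. This is where boundedness enters. Since $f\in\BA$ we only know $|f(p)|\le A$, so primes with $1<|f(p)|\le A$ can be arbitrarily large. This is the main obstacle.

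My first idea is to choose $g(p)=f(p)/|f(p)|$ when $|f(p)|>1$, but then $h(p)=f(p)-g(p)$ has size up to $A-1$ at many primes. In that case $\sum_p|h(p)|/p^\sigma$ diverges for $\sigma$ near $1$, and \eqref{eq:h-norm} fails.

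So I will reread the definition of $\BA$. If the intended normalization makes $|f(p)|\le1$ automatic, then all is fine. Otherwise I would replace $f$ by $f/A$ through Lemma~\ref{lem:linear-A}, working with $f\in\mathcal B_1$ scaled. This makes $h(p)=0$ for every prime $p$.

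Assume $h(p)=0$ for all $p$. Then
\[
 \sum_{d\ge1}\frac{|h(d)|}{d^\sigma}=\prod_p\Big(1+\sum_{k\ge2}\frac{|h(p^k)|}{p^{k\sigma}}\Big)\le\prod_p\Big(1+\frac{2A}{p^{2\sigma}}\cdot\frac1{1-p^{-\sigma}}\Big).
\]
This product converges for $\sigma>1/2$, with the factor $1/(1-p^{-\sigma})\le 1/(1-2^{-1/2})$ uniformly. That gives \eqref{eq:h-norm}.

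Finally, \eqref{eq:h-tail} follows from \eqref{eq:h-norm} with $\sigma=3/4$ by Rankin's trick:
\[
 \sum_{d>x}\frac{|h(d)|}{d}\le x^{-1/4}\sum_d\frac{|h(d)|}{d^{3/4}},
 \qquad
 \sum_{d\le x}|h(d)|\le x^{3/4}\sum_d\frac{|h(d)|}{d^{3/4}}.
\]
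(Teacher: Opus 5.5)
There is a genuine gap. Your argument proves the lemma only for $f\in\mathcal B_1$, where $|f(p)|\le1$ forces $h(p)=0$. Neither of your two routes to general $A$ works.

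\emph{The zeroing construction fails.} Setting $g(p)=0$ when $|f(p)|>1$ breaks down when infinitely many primes have $|f(p)|>1$. Take $f(p^k)=1+p^{-2}$ for every prime $p$ and every $k\ge1$. Then $|f(n)|\le\prod_p(1+p^{-2})=15/\pi^2$, so $f\in\BA$ with $A=15/\pi^2$. But your $g$ is the identity for Dirichlet convolution, so $h=f$, and $\sum_d|h(d)|d^{-\sigma}$ diverges for every $\sigma\le1$.

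\emph{The fallback is not valid.} Replacing $f$ by $f/A$ via Lemma~\ref{lem:linear-A} does not make sense here. The function $f/A$ is not multiplicative. Lemma~\ref{lem:linear-A} only writes $f$, on $n\le X$, as a combination of several different functions in $\mathcal B_1$. It gives no factorization $f=g*h$ of $f$ itself, which is what the statement asserts.

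\emph{The missing idea.} You discarded your first idea on the grounds that $h(p)$ could have size up to $A-1$ at many primes, making $\sum_p|h(p)|p^{-\sigma}$ diverge. That objection overlooks what membership in $\BA$ gives you. It bounds $|f(n)|$ for every $n$, not only at primes. So for any finite set $S$ of primes with $|f(p)|>1$, multiplicativity gives
\[
\prod_{p\in S}|f(p)|=\Big|f\Big(\prod_{p\in S}p\Big)\Big|\le A.
\]
Since $\prod(1+b_p)\ge1+\sum b_p$ for $b_p\ge0$, it follows that $\sum_p(|f(p)|-1)_+\le A-1$.

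With your first choice, $g(p)=f(p)/\max\{1,|f(p)|\}$, you have $|h(p)|=(|f(p)|-1)_+$, so $\sum_p|h(p)|\le A-1$. The Euler product is then at most
\[
\exp\Big(A-1+2A\sum_p\frac{p^{-2\sigma}}{1-p^{-\sigma}}\Big).
\]
This is exactly the paper's proof. Your bound $|h(p^k)|\le2A$ for $k\ge2$ and your Rankin step for \eqref{eq:h-tail} are fine as written once this is in place.
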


\begin{proof}
Set $g(p)=f(p)/\max\{1,|f(p)|\}$ and extend $g$ completely
multiplicatively. The function $h$ is then determined by
\[
 h(p)=f(p)-g(p),\qquad
 h(p^k)=f(p^k)-g(p)f(p^{k-1})\quad(k\ge2).
\]
For a finite set of primes with $|f(p)|>1$, multiplicativity gives
$\prod_p|f(p)|\le A$. Since a product of numbers $1+b_p$ with
$b_p\ge0$ is at least $1+\sum_pb_p$, it follows that
\[
 \sum_p\big(|f(p)|-1\big)_+\le A-1.
\]
Moreover, $|h(p)|=(|f(p)|-1)_+$ and $|h(p^k)|\le2A$ for $k\ge2$. The
Euler product for the left side of \eqref{eq:h-norm} is at most
\[
 \exp\bigg(A-1+2A\sum_p\frac{p^{-2\sigma}}{1-p^{-\sigma}}\bigg)<\infty.
\]
Taking $\sigma=3/4$ and comparing powers proves \eqref{eq:h-tail}.
\end{proof}

The second lemma gives the dependence on $A$ in all our estimates: it
suffices to prove them for $A=1$.

\begin{lemma}\label{lem:linear-A}
Let $A\ge1$, $X\ge1$ and $f\in\BA$. There are finitely many functions
$g_j\in\mathcal B_1$ and real numbers $w_j$ with $\sum_j|w_j|\le A$
such that
\begin{equation}\label{eq:decomposition-A}
 f(n)=\sum_jw_jg_j(n)\qquad(n\le X).
\end{equation}
Consequently, if $\mathscr N$ is a seminorm on
$\C^{\lfloor X\rfloor}$, then
\begin{equation}\label{eq:seminorm-A}
 \mathscr N\big((f(n))_{n\le X}\big)
 \le A\sup_{g\in\mathcal B_1}\mathscr N\big((g(n))_{n\le X}\big),
\end{equation}
and in particular, for all complex numbers $c_n$,
\begin{equation}\label{eq:linear-A}
 \bigg|\sum_{n\le X}c_nf(n)\bigg|
 \le A\sup_{g\in\mathcal B_1}\bigg|\sum_{n\le X}c_ng(n)\bigg|.
\end{equation}
\end{lemma}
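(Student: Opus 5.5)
The plan is to decompose $f$ prime power by prime power. For $n\le X$ only the finitely many prime powers $p^k\le X$ matter, and $f(n)=\prod_{p^k\| n}f(p^k)$. For each prime $p\le X$, consider the finite vector $v_p=(f(p),f(p^2),\dots,f(p^{k_p}))$, where $p^{k_p}\le X<p^{k_p+1}$. Put $A_p=\max\{1,\max_k|f(p^k)|\}$. Since $f\in\BA$, multiplicativity gives, for any finite set $S$ of primes and exponents $k_p$,
\[
 \prod_{p\in S}|f(p^{k_p})|=|f(\textstyle\prod_{p\in S}p^{k_p})|\le A.
\]
Choosing, for each $p$ with $A_p>1$, an exponent attaining the maximum, we get $\prod_pA_p\le A$ over $p\le X$.

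Next, write each local vector as a convex-type combination of unit-bounded vectors. The vector $v_p/A_p$ lies in the closed polydisc of radius $1$. Every point of the closed unit disc is the average of two points of the unit circle: $z=\tfrac12(\e(\phi_1)+\e(\phi_2))$. Taking products over coordinates, $v_p/A_p$ is the average of $2^{k_p}$ vectors with all coordinates on the unit circle, hence with coordinates of modulus $\le1$. So $v_p=A_p\sum_i\lambda_{p,i}u_{p,i}$ with $\lambda_{p,i}\ge0$, $\sum_i\lambda_{p,i}=1$ and $u_{p,i}\in\C^{k_p}$ having entries of modulus at most $1$.

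Then define, for each choice of indices $i=(i_p)_{p\le X}$, the multiplicative function $g_i$ by $g_i(p^k)=u_{p,i_p}(k)$ for $p^k\le X$ and $g_i(p^k)=0$ (say) otherwise. Each $g_i\in\mathcal B_1$, because every value is a product of numbers of modulus at most $1$. Expanding the product over $p$, we get, for $n\le X$,
\[
 f(n)=\prod_{p^k\|n}A_p\sum_{i}\lambda_{p,i}u_{p,i}(k).
\]
The primes not dividing $n$ also contribute their factor $A_p\sum_i\lambda_{p,i}=A_p$. To make this exact we include all primes $p\le X$ in the product, so that for every $n\le X$,
\[
 f(n)=\Big(\prod_{p\le X}A_p\Big)\sum_i\Big(\prod_p\lambda_{p,i_p}\Big)g_i(n).
\]
This uses $g_i(1)=1$ and $g_i(p^0)=1$. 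Thus $w_i=\prod_pA_p\prod_p\lambda_{p,i_p}$ are nonnegative reals with $\sum_iw_i=\prod_pA_p\le A$, which proves \eqref{eq:decomposition-A}.

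The consequences follow directly from this decomposition. The triangle inequality and homogeneity of the seminorm give
\[
 \mathscr N(f)\le\sum_i|w_i|\,\mathscr N(g_i)\le A\sup_g\mathscr N(g),
\]
which is \eqref{eq:seminorm-A}. The bound \eqref{eq:linear-A} is the special case $\mathscr N(x)=|\sum c_nx_n|$.

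The only delicate point is the inequality $\prod_pA_p\le A$. Since the maximal exponents chosen at distinct primes give a product $\prod p^{k_p}$ that may exceed $X$, the argument must use that $|f(m)|\le A$ for all $m$, not just for $m\le X$. That is exactly the hypothesis $f\in\BA$, so there is no problem.
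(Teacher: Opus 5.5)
\textbf{There is a genuine gap: your decomposition identity is false, and no decomposition with nonnegative weights can work.}

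Expand your product over $p\le X$. A prime $p\le X$ with $p\nmid n$ contributes the factor $A_p\sum_i\lambda_{p,i}\cdot1=A_p$, not $1$. So the right side of your identity equals
\[
 f(n)\prod_{\substack{p\le X\\ p\nmid n}}A_p .
\]
At $n=1$ this is $\prod_{p\le X}A_p>1=f(1)$ as soon as some $A_p>1$. For a concrete case, take $f(n)=1$ for odd $n$ and $f(n)=A\ge2$ for even $n$, with $X\ge2$. The only unit-circle vectors averaging to $(1,\dots,1)$ are $(1,\dots,1)$ itself, so your construction yields only the function $g=1$, with total weight $A$. The right side is then $A$ at every odd $n\le X$.

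You noticed the stray factor $A_p$, but including all primes $p\le X$ in the product is what produces it, not a cure. Nonnegative weights are ruled out in general. Every $g\in\mathcal B_1$ satisfies $g(1)=1$, so evaluating at $n=1$ forces $\sum_jw_j=f(1)=1$. With $w_j\ge0$ this gives $\sum_j|w_j|=1$, hence $|f(n)|\le1$ for all $n\le X$, which fails whenever $f$ exceeds $1$ somewhere.

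\textbf{The missing idea is a signed local decomposition.} At each prime the weights must have plain sum $1$, so that primes not dividing $n$ contribute $1$. Paired with the local values they must reproduce $f(p^k)$, and their absolute sum must be $m_p=A_p$.

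The paper does this with sign vectors $\eps=(\eps_p)_{p\le X}$. It takes $g_\eps(p^k)=\eps_pf(p^k)/m_p$ for $p^k\le X$ (and $0$ otherwise), and weights $w(\eps)=\prod_{p\le X}\frac{1+\eps_pm_p}2$. For $m\ge1$ one has the identities $\sum_{\eps=\pm1}\frac{1+\eps m}2=1$, $\sum_{\eps=\pm1}\eps\frac{1+\eps m}2=m$ and $\sum_{\eps=\pm1}\bigl|\frac{1+\eps m}2\bigr|=m$. Taking products gives $f(n)=\sum_\eps w(\eps)g_\eps(n)$ for $n\le X$, with $\sum_\eps|w(\eps)|=\prod_pm_p\le A$. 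The negative weight $\frac{1-m_p}2$ is essential.

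Your averaging over points of the unit circle is unnecessary, since $\mathcal B_1$ only requires $|g(n)|\le1$ and $f(p^k)/m_p$ can be used directly. You could also keep it, replacing $u_{p,i}$ by $\eps_pu_{p,i}$ and $\lambda_{p,i}$ by $\frac{1+\eps_pA_p}2\lambda_{p,i}$.

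The rest of your argument is correct and matches the paper. This covers the bound $\prod_pA_p\le A$, obtained through an integer that may exceed $X$, and the deduction of the seminorm inequalities from a valid decomposition.
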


\begin{proof}
For each prime $p\le X$, put
\[
 m_p=\max\big(\{1\}\cup\{|f(p^k)|:p^k\le X\}\big).
\]
For each $p$ with $m_p>1$, choose a prime power attaining the maximum,
and let $n_0$ be the product of these prime powers. Multiplicativity
and the bound $|f(n_0)|\le A$ give $\prod_{p\le X}m_p=|f(n_0)|\le A$.
The integer $n_0$ need not be at most $X$.

Define $u(p^k)=f(p^k)/m_p$ for $p^k\le X$. For each vector
$\eps=(\eps_p)_{p\le X}$ of signs $\eps_p=\pm1$, let $g_\eps$ be the
multiplicative function with
\[
 g_\eps(p^k)=\eps_pu(p^k)\quad(p^k\le X),
\]
and $g_\eps(p^k)=0$ at all other prime powers, and put
\[
 w(\eps)=\prod_{p\le X}\frac{1+\eps_pm_p}2.
\]
Each $g_\eps$ belongs to $\mathcal B_1$. For $m\ge1$,
\[
 \sum_{\eps=\pm1}\frac{1+\eps m}2=1,\qquad
 \sum_{\eps=\pm1}\eps\,\frac{1+\eps m}2=m,\qquad
 \sum_{\eps=\pm1}\bigg|\frac{1+\eps m}2\bigg|=m.
\]
Taking products of these identities over $p\le X$ gives
\[
 f(n)=\sum_\eps w(\eps)g_\eps(n)\quad(n\le X),
 \qquad\sum_\eps|w(\eps)|=\prod_{p\le X}m_p\le A,
\]
which is \eqref{eq:decomposition-A}. The triangle inequality and the
homogeneity of $\mathscr N$ give
\[
 \mathscr N\big((f(n))_{n\le X}\big)
 \le\sum_\eps|w(\eps)|\,\mathscr N\big((g_\eps(n))_{n\le X}\big),
\]
which proves \eqref{eq:seminorm-A}. Applying this to
$\mathscr N((z_n))=|\sum_{n\le X}c_nz_n|$ gives \eqref{eq:linear-A}.
\end{proof}

\subsection{Gauss sums and character projections}\label{sec:gauss}

Write
\[
 H_r=\frac r{\varphi(r)},\qquad V_r=\sum_{d\mid r}d^{-1/2},\qquad
 \Sr=\{s\ge1:p\mid s\Longrightarrow p\mid r\}.
\]
For functions of the reduced residues modulo $r$ we use the norm
\[
 \|F\|_{2,r}^2=\frac1{\varphi(r)}\sum_{b\bmod r}^{*}|F(b)|^2,
\]
for which the characters modulo $r$ form an orthonormal basis. Define
\begin{equation}\label{eq:Gamma}
 \Gamma_\chi(n;r)=\frac1{\varphi(r)}\sum_{b\bmod r}^{*}\chi(b)\e(bn/r).
\end{equation}
For complex coefficients $c_n$ and $y\ge0$, the sum
$F_y(b,\theta)=\sum_{n\le y}c_n\e(n(b/r+\theta))$ has the expansion
\begin{equation}\label{eq:char-expansion}
 F_y(b,\theta)=\sum_{\chi\bmod r}\overline{\chi(b)}C_\chi(y,\theta),
 \qquad C_\chi(y,\theta)=\sum_{n\le y}c_n\Gamma_\chi(n;r)\e(n\theta).
\end{equation}
For a set $\Eset$ of characters modulo $r$, we write $P_\Eset$ for the
orthogonal projection onto the span of the conjugates of the characters
in $\Eset$, so that
$P_\Eset F_y(b,\theta)=\sum_{\chi\in\Eset}\overline{\chi(b)}C_\chi(y,\theta)$.
Unless stated otherwise, $c_n=f(n)$.

The first lemma evaluates the sum of the absolute values of the Gauss
sums $\Gamma_\chi(v;r)$ over a period.

\begin{lemma}\label{lem:Gauss-mass}
If $\chi$ is a character modulo $r$ of conductor $d$, then
\begin{equation}\label{eq:Gamma-mass}
 \sum_{v\bmod r}|\Gamma_\chi(v;r)|
 =\sqrt d\,2^{\omega(r)-\omega(d)}\le\frac4{\sqrt6}\sqrt r.
\end{equation}
Also $|\Gamma_\chi(n;r)|\le1$.
\end{lemma}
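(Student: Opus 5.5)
The plan is to reduce \eqref{eq:Gamma-mass} to prime-power moduli by the Chinese remainder theorem and then evaluate the local Gauss sums explicitly.

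\textbf{The bound $|\Gamma_\chi(n;r)|\le1$.} This is immediate from the triangle inequality in \eqref{eq:Gamma}: there are $\varphi(r)$ terms, each of modulus at most $1$, divided by $\varphi(r)$.

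\textbf{Factorization.} Write $r=\prod_p p^{e_p}$ and $r_p=r/p^{e_p}$, and let $\chi=\prod_p\chi_p$ with $\chi_p$ a character modulo $p^{e_p}$ of conductor $p^{f_p}$. Then $d=\prod_p p^{f_p}$. Choose idempotents $\epsilon_p\equiv1\pmod{p^{e_p}}$ with $\epsilon_p\equiv0\pmod{r_p}$. Writing $b\equiv\sum_p b_p\epsilon_p$, the reduced residues $b$ modulo $r$ correspond to tuples of reduced residues $b_p$ modulo $p^{e_p}$. Since $\epsilon_p=r_p\overline{r_p}$, where $\overline{r_p}$ is the inverse of $r_p$ modulo $p^{e_p}$, we have
\[
 \e(bv/r)=\prod_p\e\big(b_pv\overline{r_p}/p^{e_p}\big).
\]
Also $\chi(b)=\prod_p\chi_p(b_p)$ and $\varphi(r)=\prod_p\varphi(p^{e_p})$. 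Hence $\Gamma_\chi(v;r)=\prod_p\Gamma_{\chi_p}(v\overline{r_p};p^{e_p})$. As $v$ runs over residues modulo $r$, the tuple $(v\overline{r_p}\bmod p^{e_p})_p$ runs over all tuples of residues, since multiplication by the unit $\overline{r_p}$ permutes residues modulo $p^{e_p}$. Therefore the left side of \eqref{eq:Gamma-mass} equals the product over $p$ of the local sums $\sum_{v\bmod p^{e}}|\Gamma_{\chi_p}(v;p^{e})|$. It then suffices to show that each local sum equals $2$ when $f=0$ and $p^{f/2}$ when $f\ge1$.

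\textbf{Local evaluation.} There are two cases.

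\emph{Principal $\chi_p$ ($f=0$).} Here $\varphi(p^e)\Gamma$ is the Ramanujan sum $c_{p^e}(v)$. It equals $\varphi(p^e)$ when $p^e\mid v$, equals $-p^{e-1}$ when $p^{e-1}\,\|\,v$, and vanishes otherwise. The second case occurs for $p-1$ residues $v$. So the local sum is $(\varphi(p^e)+(p-1)p^{e-1})/\varphi(p^e)=2$.

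\emph{$f\ge1$.} Let $\chi^*$ be the primitive character modulo $p^f$ inducing $\chi_p$. Because $\chi^*(b)=0$ when $p\mid b$, the restriction $(b,p)=1$ may be dropped. Summing over $b=b_0+p^fj$ then shows the sum vanishes unless $p^{e-f}\mid v$. When $v=p^{e-f}w$, it equals $p^{e-f}\tau(\chi^*,w)$, where $\tau(\chi^*,w)=\sum_{b\bmod p^f}\chi^*(b)\e(bw/p^f)$. For primitive $\chi^*$ one has $\tau(\chi^*,w)=\overline{\chi^*}(w)\tau(\chi^*)$ with $|\tau(\chi^*)|=p^{f/2}$. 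This uses the standard fact that for primitive characters the twisted Gauss sum vanishes when $(w,p)>1$. There are $\varphi(p^f)$ admissible $w$ modulo $p^f$. The local sum is therefore $\varphi(p^f)p^{e-f}p^{f/2}/\varphi(p^e)=p^{f/2}$. Multiplying over $p$ gives $\sqrt d\,2^{\omega(r)-\omega(d)}$.

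\textbf{The inequality.} For each $p$ the local factor is at most $\max\{2,p^{e/2}\}$. So
\[
 \frac{\sqrt d\,2^{\omega(r)-\omega(d)}}{\sqrt r}\le\prod_{p^e\|r}\max\{2p^{-e/2},1\}.
\]
The factor exceeds $1$ only when $e=1$ and $p\in\{2,3\}$, where it equals $\sqrt2$ and $2/\sqrt3$ respectively. The product is thus at most $\sqrt2\cdot2/\sqrt3=4/\sqrt6$.

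\textbf{Main obstacle.} The only delicate point is the vanishing and evaluation of $\tau(\chi^*,w)$ when $p\mid w$ for primitive $\chi^*$ of prime-power conductor. I would quote this from a standard reference, such as Montgomery–Vaughan's book, rather than reprove it.
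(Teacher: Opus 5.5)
Your proof is correct, but it takes a different route from the paper's. The paper does not factor over primes. It quotes the evaluation of Gauss sums of imprimitive characters, \cite[Theorem~9.12]{MV2}. With $m=r/(n,r)$, this gives $|\Gamma_\chi(n;r)|=\sqrt d/\varphi(m)$ when $d\mid m$ and $m/d$ is squarefree and coprime to $d$, and $\Gamma_\chi(n;r)=0$ otherwise. Exactly $\varphi(m)$ residues $n$ have $r/(n,r)=m$, so each such admissible $m$ contributes $\sqrt d$, and there are $2^{\omega(r)-\omega(d)}$ of them.

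You instead show that $\Gamma_\chi(\cdot\,;r)$ factors via the Chinese remainder theorem, and then evaluate the local sums directly. For principal components you use Ramanujan sums. Otherwise you use $\tau(\chi^*,w)=\overline{\chi^*}(w)\tau(\chi^*)$ for primitive $\chi^*$. In effect you reprove the needed case of that theorem and rely only on the primitive-character fact. This also makes the origin of the factor $2$ clear: at each prime of $r$ not dividing $d$, the Ramanujan sum takes exactly two nonzero values.

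The paper's route is shorter given the citation. It also gives directly the pointwise bound $\max_\chi|\Gamma_\chi(s;r)|\le\sqrt m/\varphi(m)$, which is reused in the proof of \eqref{eq:W}. Your local formulas give the same bound on taking products, since $(v\overline{r_p},p^{e_p})=(v,p^{e_p})$, but you would have to extract it separately. The final comparison of the local factors with $p^{e/2}$ is the same in both proofs.
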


\begin{proof}
For a character $\psi$ modulo $m$, write
\[
 \tau(\psi)=\sum_{v\bmod m}\psi(v)\e(v/m),
\]
with the character and the Gauss sum modulo $1$ both equal to $1$. Let
$\chi^*$ be the primitive character modulo $d$ inducing $\chi$. By
\cite[Theorem~9.7]{MV2},
\begin{equation}\label{eq:induced-Gauss}
 |\tau(\chi^*)|=\sqrt d.
\end{equation}
Put $m=r/(n,r)$ and $n'=n/(n,r)$. By \cite[Theorem~9.12]{MV2},
$\Gamma_\chi(n;r)=0$ unless $d\mid m$, and for $d\mid m$
\[
 \Gamma_\chi(n;r)=\overline{\chi^*(n')}\,\chi^*(m/d)\,\mu(m/d)\,
 \frac{\tau(\chi^*)}{\varphi(m)}.
\]
Here $|\chi^*(n')|=1$, because $(n',m)=1$ and $d\mid m$. Call $m$
admissible if $d\mid m$ and $m/d$ is squarefree and coprime to $d$.
By \eqref{eq:induced-Gauss}, $|\Gamma_\chi(n;r)|=\sqrt d/\varphi(m)$ if
$m$ is admissible, and $\Gamma_\chi(n;r)=0$ otherwise.
For each $m\mid r$ there are $\varphi(m)$ residue classes
$n$ modulo $r$ with $r/(n,r)=m$, so each admissible $m$ contributes
$\sqrt d$ to the sum in \eqref{eq:Gamma-mass}. The admissible $m$ are
the products $dk$, where $k$ is a product of distinct primes dividing
$r$ but not $d$; there are $2^{\omega(r)-\omega(d)}$ of them. This
gives the equality in \eqref{eq:Gamma-mass}.

For $p^\nu\Vert r$ and $p^\delta\Vert d$ with $\delta>0$, the local
factor of the ratio of $\sqrt d\,2^{\omega(r)-\omega(d)}$ to $\sqrt r$
is $p^{(\delta-\nu)/2}\le1$. If $\delta=0$, it is $2p^{-\nu/2}$. Only
$p=2$ and $p=3$ can give a factor larger than $1$, and the product of
these two factors is at most $4/\sqrt6$. The constant $4/\sqrt6$ is
attained by the principal character modulo $6$, for which the sum in
\eqref{eq:Gamma-mass} is $4$. The underlying bound
$2^{\omega(n)}\le(4/\sqrt6)\sqrt n$ is also used in \cite{GL}, after
(7.22) there. The bound $|\Gamma_\chi(n;r)|\le1$ is immediate from
\eqref{eq:Gamma}.
\end{proof}

The next lemma bounds the contribution of a single character, and of a
bounded set of characters, to the mean square \eqref{eq:energy-intro}.

\begin{lemma}\label{lem:one-character}
Let $U\ge1$, let $|c_n|\le A$ for $1\le n\le U$, and let $C_\chi$ and
$F_y$ be as in \eqref{eq:char-expansion}. Then
\begin{align}
 \sup_{y\le U,\,\theta\in\R}|C_\chi(y,\theta)|
 &\ll A\frac{U+r}{\sqrt r},\label{eq:one-point}\\
 \int_I\max_{y\le U}|C_\chi(y,\theta)|^2\,d\theta
 &\ll A^2\frac{U+r}r\label{eq:one-energy}
\end{align}
for every real interval $I$ of length at most $1/r$. If $\Eset$ is a
set of $J$ characters modulo $r$, then for every such interval $I$ one has
\begin{equation}\label{eq:finite-rank-energy}
 \frac1{\varphi(r)}\sum_{b\bmod r}^{*}\int_I
 \max_{y\le U}|P_\Eset F_y(b,\theta)|^2\,d\theta
 \ll A^2J^2\frac{U+r}r.
\end{equation}
\end{lemma}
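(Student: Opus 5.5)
The plan is to exploit the periodicity of $\Gamma_\chi(\cdot;r)$ modulo $r$ and split the sum defining $C_\chi(y,\theta)$ by residue classes. For $v\bmod r$ put
\[
 G_v(y,\theta)=\sum_{\substack{n\le y\\ n\equiv v\bmod r}}c_n\e(n\theta),
\]
so that $C_\chi(y,\theta)=\sum_{v\bmod r}\Gamma_\chi(v;r)G_v(y,\theta)$. Each $G_v$ has at most $U/r+1$ terms of size at most $A$. The triangle inequality and Lemma~\ref{lem:Gauss-mass} then give $|C_\chi|\le A(U/r+1)\cdot(4/\sqrt6)\sqrt r\ll A(U+r)/\sqrt r$, which is \eqref{eq:one-point}.

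For \eqref{eq:one-energy}, write $w_v=|\Gamma_\chi(v;r)|$ and $W=\sum_vw_v\ll\sqrt r$. The triangle inequality and Cauchy--Schwarz give
\[
 \max_{y\le U}|C_\chi(y,\theta)|^2\le W\sum_v w_v\max_{y\le U}|G_v(y,\theta)|^2 .
\]
For each $v$, write $n=v+rm$. Then $G_v(y,\theta)=\e(v\theta)\sum_{m\le(y-v)/r}c_{v+rm}\e(m r\theta)$, so $\max_y|G_v(y,\theta)|=M_{c^{(v)}}(r\theta)$ for the sequence $c^{(v)}_m=c_{v+rm}$. This sequence is supported on at most $U/r+1$ consecutive integers and has energy $E_v\le A^2(U/r+1)$. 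As $\theta$ runs over an interval of length at most $1/r$, the phase $r\theta$ runs over an interval of length at most $1$. Hence \eqref{eq:CH} gives $\int_I M_{c^{(v)}}(r\theta)^2\,d\theta\le r^{-1}\int_0^1M_{c^{(v)}}(\phi)^2\,d\phi\ll A^2(U/r+1)/r$, where we use periodicity in $\phi$ and cover by at most one period. Summing over $v$ gives
\[
 \int_I\max_y|C_\chi|^2\,d\theta\ll W^2\cdot A^2\frac{U+r}{r^2}\ll A^2\frac{U+r}r .
\]
The only point to check is that the variable change and periodicity are handled correctly. I expect this to be the one delicate step; the rest is bookkeeping.

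For \eqref{eq:finite-rank-energy}, note that $|P_\Eset F_y(b,\theta)|\le\sum_{\chi\in\Eset}|C_\chi(y,\theta)|$, since $|\chi(b)|=1$ for $(b,r)=1$. Cauchy--Schwarz over the $J$ characters bounds the maximum over $y$ of its square by $J\sum_{\chi\in\Eset}\max_y|C_\chi(y,\theta)|^2$. This bound is independent of $b$, so the average over reduced $b$ costs nothing. Integrating over $I$ and applying \eqref{eq:one-energy} to each of the $J$ characters yields $A^2J^2(U+r)/r$.
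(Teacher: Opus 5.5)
Your proposal is correct and follows the paper's proof. You group $n$ by residue class modulo $r$, apply \eqref{eq:CH} to each class in the variable $\phi=r\theta$, combine the classes using the Gauss-sum mass bound of Lemma~\ref{lem:Gauss-mass}, and finish with Cauchy--Schwarz over the $J$ characters; the only cosmetic difference is that you combine the classes by a weighted Cauchy--Schwarz inequality where the paper uses Minkowski's inequality, and both give the same factor $W^2\ll r$.
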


\begin{proof}
Group $n$ according to its residue $v$ modulo $r$. Each group has at
most $U/r+1$ terms, and \eqref{eq:one-point} follows from
\eqref{eq:Gamma-mass}. For the integral, the sum over the $n\le y$ in one group
is $\e(v\theta)\sum_kc_{v+rk}\e(rk\theta)$, where $k$ runs over the
integers with $1\le v+rk\le y$. Its modulus is a function of
$\phi=r\theta$ of period $1$, and $I$ corresponds to an interval of
$\phi$ of length at most $1$. By \eqref{eq:CH}, applied in the variable
$\phi$, and $d\theta=d\phi/r$, the integral over $I$ of the square of
its maximum over $y\le U$ is $\ll A^2(U/r+1)/r$. Minkowski's inequality and \eqref{eq:Gamma-mass}
then give \eqref{eq:one-energy}. Finally, the Cauchy-Schwarz inequality
gives, for every $b$ and $\theta$,
\[
 \max_{y\le U}|P_\Eset F_y(b,\theta)|^2
 \le J\sum_{\chi\in\Eset}\max_{y\le U}|C_\chi(y,\theta)|^2,
\]
and \eqref{eq:finite-rank-energy} follows from \eqref{eq:one-energy}
on integrating over $I$ and averaging over $b$.
\end{proof}

The third lemma collects bounds for sums over $\Sr$. The estimate
\eqref{eq:W} supplies the factor $r^{-1/2}$ used in the character-sum
estimates of Section~\ref{sec:energy}.

\begin{lemma}\label{lem:smooth-weights}
For every $r\ge1$ and $x\ge1$,
\begin{align}
 &\sum_{s\in\Sr}s^{-1}=H_r,\qquad
 \sum_{s\in\Sr}s^{-1/2}\le r^2,\label{eq:smooth-sums}\\
 &\sum_{\substack{s\in\Sr\\s>x}}s^{-1}\le r^2x^{-1/2},\qquad
 \#\{s\in\Sr:s\le x\}\le r^2x^{1/2},\label{eq:smooth-tail}\\
 &W_r:=\sum_{s\in\Sr}\frac{\max_\chi|\Gamma_\chi(s;r)|}s
 \le\frac{H_rV_r}{\sqrt r}.\label{eq:W}
\end{align}
For fixed nonnegative $a,b$ and every $\nu>0$,
\begin{equation}\label{eq:subpower}
 H_r^aV_r^b\ll_{a,b,\nu}r^\nu.
\end{equation}
\end{lemma}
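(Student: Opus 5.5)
The proof is a direct Euler-product computation for each quantity. Throughout, write $r=\prod_{p\mid r}p^{\nu_p}$.

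\textbf{The sums in \eqref{eq:smooth-sums}.} For the first identity, $\Sr$ is the set of integers composed only of primes dividing $r$. Hence
\[
 \sum_{s\in\Sr}s^{-1}=\prod_{p\mid r}(1-p^{-1})^{-1}=r/\varphi(r)=H_r .
\]
For the second,
\[
 \sum_{s\in\Sr}s^{-1/2}=\prod_{p\mid r}(1-p^{-1/2})^{-1}.
\]
Since $(1-p^{-1/2})^{-1}\le(1-2^{-1/2})^{-1}<3.5\le p^2$ for every prime $p$, the product is at most $\prod_{p\mid r}p^2\le r^2$.

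\textbf{The bounds in \eqref{eq:smooth-tail}.} Both follow from Rankin's trick using the second bound:
\[
 \sum_{s\in\Sr,\ s>x}s^{-1}\le x^{-1/2}\sum_{s\in\Sr}s^{-1/2}\le r^2x^{-1/2},
\]
and
\[
 \#\{s\in\Sr:s\le x\}\le x^{1/2}\sum_{s\in\Sr}s^{-1/2}\le r^2x^{1/2}.
\]

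\textbf{The bound \eqref{eq:W}.} This is the main step. I use the explicit formula from the proof of Lemma~\ref{lem:Gauss-mass}. Let $m=r/(s,r)$. For a character of conductor $d$, either $|\Gamma_\chi(s;r)|=\sqrt d/\varphi(m)$ (when $m$ is admissible) or $\Gamma_\chi(s;r)=0$. Admissibility requires $d\mid m$, so $\sqrt d\le\sqrt m$, and therefore
\[
 \max_\chi|\Gamma_\chi(s;r)|\le\sqrt m/\varphi(m).
\]
Next I parametrize $s\in\Sr$ by $e=(s,r)$, which is a divisor of $r$, and write $m=r/e$. The elements $s\in\Sr$ with $(s,r)=e$ are of the form $s=et$ with $t\in\Sr$. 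So
\[
 \sum_{s\in\Sr,\ (s,r)=e}s^{-1}\le e^{-1}\sum_{t\in\Sr}t^{-1}=H_r/e .
\]
This gives
\[
 W_r\le H_r\sum_{e\mid r}\frac1e\cdot\frac{\sqrt{r/e}}{\varphi(r/e)}
 = H_r\sum_{m\mid r}\frac{m}{r}\cdot\frac{\sqrt m}{\varphi(m)} .
\]
The crude bound $\varphi(m)\ge m/H_r$ would cost a second factor of $H_r$, which the target does not allow. Instead I rewrite the summand as
\[
 \frac{m^{3/2}}{r\varphi(m)}=\frac{1}{\sqrt r}\cdot\frac{m}{\varphi(m)}\cdot\Big(\frac{m}{r}\Big)^{1/2}.
\]
With $d'=r/m$, the target $H_rV_r/\sqrt r$ amounts to showing
\[
 \sum_{d'\mid r}d'^{-1/2}\,\frac{m}{\varphi(m)}\le V_r\qquad(m=r/d'),
\]
which is false as stated because $m/\varphi(m)\ge1$. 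So the parametrization has to be sharpened so that only one factor $H_r$ appears. My plan is to avoid the overcount in the sum over $t$: the constraint $(et,r)=e$ forces every prime $p$ with $p\nmid m$ to satisfy $p^{\nu_p}\mid e$, with no restriction on the $p$-part of $t$. Doing the computation prime by prime, the local factor for each $p\mid r$ is the sum over $0\le j\le\nu_p$ of the contribution of $e$ with $p^j\Vert e$ (for $j<\nu_p$ the $p$-part of $s$ is exactly $p^j$). These terms produce $p^{-j}$ together with the local ratio $\sqrt{p^{\nu_p-j}}/\varphi(p^{\nu_p-j})$. The single term $j=\nu_p$ carries the free geometric series $(1-p^{-1})^{-1}$, which is the $p$-factor of $H_r$. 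I then check the local inequality
\[
 \text{local factor}\le(1-p^{-1})^{-1}p^{-\nu_p/2}\sum_{i\le\nu_p}p^{-i/2},
\]
which is an elementary comparison of finite geometric sums. Multiplying over $p\mid r$ gives \eqref{eq:W}. This local verification is the only delicate point.

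\textbf{The bound \eqref{eq:subpower}.} This is standard. Since $V_r\le d(r)\ll_\nu r^\nu$, it suffices to handle $H_r$, and $H_r\ll\log\log(3r)\ll_\nu r^\nu$. Raising to fixed powers $a,b$ and renaming $\nu$ gives the claim.
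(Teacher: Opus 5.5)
Your proposal is correct and is essentially the paper's argument: the paper also bounds $\max_\chi|\Gamma_\chi(s;r)|$ by $\sqrt m/\varphi(m)$ with $m=r/(s,r)$, and then computes the sum of $1/s$ over each class exactly as $\frac mr\cdot\frac{H_r}{H_m}$, so that $H_m$ cancels. That is your prime-by-prime computation grouped by $m$, and your local inequality is in fact an identity, because for $j<\nu_p$ the factor $(1-p^{-1})^{-1}$ comes from $\varphi(p^{\nu_p-j})$.

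The only difference is in \eqref{eq:subpower}. You use the divisor bound and $H_r\ll\log\log(3r)$, while the paper compares the Euler factors $(1-p^{-1})^{-a}(1-p^{-1/2})^{-b}$ directly with $p^\nu$; both work.
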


\begin{proof}
The assertions in \eqref{eq:smooth-sums} follow from Euler products and
$(1-p^{-1/2})^{-1}\le p^2$, and those in \eqref{eq:smooth-tail} follow
by comparing powers. For \eqref{eq:W}, group $s$ according to
$m=r/(s,r)$, and write $H_m=m/\varphi(m)$. The condition $r/(s,r)=m$
means that, for each prime $p\mid r$, the power of $p$ dividing $s$ is
exactly that dividing $r/m$ if $p\mid m$, and is at least that dividing
$r$ if $p\nmid m$. Hence the sum of $1/s$ over the $s\in\Sr$ with
$r/(s,r)=m$ is
\[
 \frac mr\prod_{\substack{p\mid r\\p\nmid m}}(1-p^{-1})^{-1}
 =\frac mr\frac{H_r}{H_m}.
\]
The proof of Lemma~\ref{lem:Gauss-mass} gives
$\max_\chi|\Gamma_\chi(s;r)|\le\sqrt m/\varphi(m)=H_m/\sqrt m$ whenever
$r/(s,r)=m$. Hence
\[
 W_r\le\sum_{m\mid r}\frac{H_m}{\sqrt m}\cdot\frac mr\frac{H_r}{H_m}
 =\frac{H_r}r\sum_{m\mid r}\sqrt m
 =\frac{H_rV_r}{\sqrt r}.
\]
Finally, the Euler factors of $H_r^aV_r^b$ are at most
$(1-p^{-1})^{-a}(1-p^{-1/2})^{-b}$. For all sufficiently large $p$
these are at most $p^\nu$, and the remaining primes contribute a
constant independent of their exponents. This proves
\eqref{eq:subpower}.
\end{proof}

Every positive integer has a unique decomposition $n=sm$ with
$s\in\Sr$ and $(m,r)=1$. Then $(s,m)=1$, so that $f(sm)=f(s)f(m)$ for
every multiplicative $f$, and
$\Gamma_\chi(sm;r)=\overline{\chi(m)}\Gamma_\chi(s;r)$. Consequently,
when $c_n=f(n)$,
\begin{equation}\label{eq:smooth-expansion}
 C_\chi(y,0)=\sum_{\substack{s\in\Sr\\s\le y}}
 f(s)\Gamma_\chi(s;r)A_f(y/s,\chi).
\end{equation}
For the sum over an interval $x<n\le y$, the factor $A_f(y/s,\chi)$ is
replaced by $A_f(y/s,\chi)-A_f(x/s,\chi)$.

\subsection{Equally spaced points}\label{sec:grid}

The last lemma bounds the maximum over all $y\le U$ by the maximum
over a set of points $y_j$, at the cost of the sums between
consecutive points. In these sums no projection is applied.

\begin{lemma}\label{lem:coarse-grid}
Let $c_n$ be supported on $1\le n\le U$, with $E=\sum_n|c_n|^2$, and
let $F_y$ be as in \eqref{eq:char-expansion}. Let $P$ be an orthogonal
projection on the space of functions of the reduced residues modulo
$r$, and put $R_y=(1-P)F_y$. For a real interval $I$ and a function
$\Psi(b,\theta)$, write
\[
 \|\Psi\|_{I,r}^2=\frac1{\varphi(r)}\sum_{b\bmod r}^{*}
 \int_I|\Psi(b,\theta)|^2\,d\theta.
\]
Let $0=y_0<y_1<\cdots<y_G=W$ be integers with $W\ge U$, and put
$\Delta=\max_j(y_{j+1}-y_j)$. Then
\begin{equation}\label{eq:coarse-grid-transfer}
 \big\|\max_{y\le U}|F_y|\big\|_{I,r}^2
 \le3\big\|\max_{0\le j\le G}|R_{y_j}|\big\|_{I,r}^2
 +3\big\|\max_{y\le W}|PF_y|\big\|_{I,r}^2
 +C\frac{|I|(\Delta+r)}{\varphi(r)}E,
\end{equation}
where $C$ is an absolute constant.
\end{lemma}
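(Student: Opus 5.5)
For each $y\le U$, let $j$ be the index with $y_j\le y<y_{j+1}$; since $y_G=W\ge U$ such $j$ exists. Write
\[
 F_y=F_{y_j}+(F_y-F_{y_j}),\qquad F_{y_j}=R_{y_j}+PF_{y_j},
\]
so that pointwise in $(b,\theta)$
\[
 \max_{y\le U}|F_y|\le\max_{0\le j\le G}|R_{y_j}|+\max_{y\le W}|PF_y|
 +\max_{j}\max_{y_j\le y<y_{j+1}}|F_y-F_{y_j}|.
\]
Squaring with $(x_1+x_2+x_3)^2\le3(x_1^2+x_2^2+x_3^2)$ and integrating over $\theta\in I$ and averaging over $b$ gives the first two terms of \eqref{eq:coarse-grid-transfer}. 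It remains to show that the third term is at most $C|I|(\Delta+r)E/\varphi(r)$. Note that no projection is applied here, so only the size of the coefficients enters.

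For fixed $j$, put $c^{(j)}_n=c_n\ind_{y_j<n\le y_{j+1}}$, a sequence supported on at most $\Delta$ consecutive integers, with energy $E_j=\sum_{y_j<n\le y_{j+1}}|c_n|^2$. Then $F_y-F_{y_j}=\sum_{n\le y}c^{(j)}_n\e(n(b/r+\theta))$ for $y_j\le y<y_{j+1}$, so its maximum over such $y$ is at most $M_{c^{(j)}}(\theta+b/r)$. I bound the maximum over $j$ by the sum over $j$ of the squares:
\[
 \max_j\max_{y}|F_y-F_{y_j}|^2\le\sum_{j}M_{c^{(j)}}(\theta+b/r)^2 .
\]
Extending the sum over reduced residues $b$ to all $b\bmod r$ and applying \eqref{eq:fullsample} with $V=\Delta$ gives, for every real $\theta$,
\[
 \frac1{\varphi(r)}\sum_{b\bmod r}^{*}\sum_jM_{c^{(j)}}(\theta+b/r)^2
 \ll\frac{\Delta+r}{\varphi(r)}\sum_jE_j\le\frac{\Delta+r}{\varphi(r)}E .
\]
The last step uses that the blocks are disjoint, and that only the blocks meeting the support $[1,U]$ contribute. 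Integrating over $I$ then multiplies this by $|I|$, which is the required third term.

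The only delicate point is the replacement of the maximum over $j$ by the sum, which is harmless because the blocks partition the support and the bound from \eqref{eq:fullsample} is uniform in $\theta$. The interval $I$ has arbitrary length, which is why the factor $|I|$ appears, rather than the Carleson–Hunt integral \eqref{eq:CH} being used instead. I expect no genuine obstacle; the main care needed is in checking that the grid indices cover all $y\le U$ (this uses $W\ge U$) and that the support shift is permitted in Proposition~\ref{prop:sampling}, which it is by the remark in its proof.
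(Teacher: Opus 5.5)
Your proposal is correct and follows the paper's proof step by step: the same decomposition $F_y=R_{y_j}+PF_{y_j}+(F_y-F_{y_j})$, the bound of the maximum over $j$ by the sum of squares, the extension to all residues modulo $r$, and \eqref{eq:fullsample} applied block by block, uniformly in $\theta$, before integrating over $I$. One trivial correction: with half-open cells $y_j\le y<y_{j+1}$, the point $y=W$ is not covered when $W=U$. There $F_W=R_{y_G}+PF_{y_G}$, so the third term vanishes; alternatively, use closed cells as the paper does.
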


\begin{proof}
If $y_j\le y\le y_{j+1}$, then
\[
 F_y=R_{y_j}+PF_{y_j}+(F_y-F_{y_j}).
\]
Let $M_j(b,\theta)$ be the maximum over $y_j\le y\le y_{j+1}$ of
$|F_y(b,\theta)-F_{y_j}(b,\theta)|$. Then, for every $b$ and $\theta$,
\[
 \max_{y\le U}|F_y|
 \le\max_j|R_{y_j}|+\max_{y\le W}|PF_y|+\max_jM_j.
\]
The square of the right side is at most three times the sum of the
three squares, and $\max_jM_j^2\le\sum_jM_j^2$. For fixed $\theta$,
extend the sum over reduced residues of $\sum_jM_j^2$ to all residues
modulo $r$. Since $F_y-F_{y_j}$ involves only the coefficients $c_n$
with $y_j<n\le y_{j+1}$, the estimate \eqref{eq:fullsample} gives
\[
 \frac1{\varphi(r)}\sum_{b\bmod r}^{*}\sum_jM_j(b,\theta)^2
 \ll\frac{\Delta+r}{\varphi(r)}\sum_j\sum_{y_j<n\le y_{j+1}}|c_n|^2
 =\frac{\Delta+r}{\varphi(r)}E.
\]
Integrating over $I$ proves \eqref{eq:coarse-grid-transfer}.
\end{proof}

\section{Maximal mean-square estimates}\label{sec:energy}

In the notation \eqref{eq:M-def}, the mean square
\eqref{eq:energy-intro} is
\begin{equation}\label{eq:energy-def}
 \EE(U,r,T)=\frac1{\varphi(r)}\sum_{b\bmod r}^{*}
 \int_{-T/U}^{T/U}M_f(U;b/r+\theta)^2\,d\theta.
\end{equation}
Throughout this section $\ell=\log U$, and $F_y$, $C_\chi$ and
$P_\Eset$ are as in Section~\ref{sec:gauss}, with $c_n=f(n)$.

For fixed $U$, $r$ and $T$, the quantity $\EE(U,r,T)^{1/2}$ is a
seminorm of the vector $(f(n))_{n\le U}$. Indeed, the maximum in
\eqref{eq:M-def} is subadditive in the coefficients, and Minkowski's
inequality gives the triangle inequality after integration and
averaging. Lemma~\ref{lem:linear-A} therefore yields, for $f\in\BA$,
\begin{equation}\label{eq:energy-A-transfer}
 \EE(U,r,T)^{1/2}
 \le A\sup_{g\in\mathcal B_1}\mathcal E_g(U,r,T)^{1/2}.
\end{equation}
Thus it suffices to prove Propositions~\ref{prop:energy-small}
and~\ref{prop:energy-large} for $f\in\mathcal B_1$.

\subsection{Short arcs}\label{sec:small-energy}

The first estimate covers the range $T\le\ell^{1/4}$.

\begin{proposition}\label{prop:energy-small}
There is an absolute constant $U_0$ such that, if $f\in\BA$,
$U\ge U_0$, $r\le(\log U)^4$ and $1\le T\le(\log U)^{1/4}$, then
\begin{equation}\label{eq:energy-small}
 \EE(U,r,T)\ll A^2\frac Ur,
\end{equation}
with an absolute implied constant.
\end{proposition}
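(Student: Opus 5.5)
By \eqref{eq:energy-A-transfer} it suffices to treat $f\in\mathcal B_1$. Lemma~\ref{lem:companion} then gives $f=g*h$ with $g$ completely multiplicative, $|g|\le1$, and $h$ small on average. Let $\Eset$ be the set of at most $63$ exceptional characters attached to $g$ by Proposition~\ref{prop:GHS-long}, and put $P=P_\Eset$. I will apply Lemma~\ref{lem:coarse-grid} with $I=[-T/U,T/U]$ and the equally spaced points $y_j=j\lceil U/G\rceil$, where $G\asymp H_rT$. With these choices $\Delta\asymp U/(H_rT)$, and since $r\le\ell^4$ is much smaller than $U/(H_rT)$, the third term of \eqref{eq:coarse-grid-transfer} is
\[
 \ll\frac{T}{U}\cdot\frac{U/(H_rT)}{\varphi(r)}\,U=\frac Ur.
\]
This is exactly where the factor $H_r$ in the grid size absorbs the totient. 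The projected term is handled by \eqref{eq:finite-rank-energy} with $J\le63$: the interval $I$ has length $2T/U\le1/r$, so its contribution is $\ll(U+r)/r\ll U/r$.

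It remains to bound the grid term $\|\max_j|R_{y_j}|\|_{I,r}^2\le\sum_{j\le G}\|R_{y_j}\|_{I,r}^2$. By orthogonality,
\[
 \|R_{y_j}\|_{I,r}^2=\int_I\sum_{\chi\notin\Eset}|C_\chi(y_j,\theta)|^2\,d\theta .
\]
Partial summation in $n$ gives
\[
 C_\chi(y,\theta)=\e(y\theta)C_\chi(y,0)-2\pi i\theta\int_0^yC_\chi(t,0)\e(t\theta)\,dt .
\]
Because $|\theta|y\le T$, this yields $\sum_{\chi\notin\Eset}|C_\chi(y,\theta)|^2\ll T^2\max_{t\le U}\sum_{\chi\notin\Eset}|C_\chi(t,0)|^2$, after Cauchy--Schwarz in $t$. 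For $t\le U^{1/3}$ the trivial bound \eqref{eq:one-point} suffices. For larger $t$, the task is to show
\begin{equation}\label{eq:plan-target}
 \Big(\sum_{\chi\notin\Eset}|C_\chi(t,0)|^2\Big)^{1/2}\ll_\nu r^{-1/2+\nu}\,U\ell^{-7/8}(\log\ell)^{15/8}+U^{1-c}
\end{equation}
for some absolute $c>0$. Granting \eqref{eq:plan-target}, the grid term is
\[
 \ll G\cdot\frac TU\cdot T^2\Big(\frac{U^2r^{-1+2\nu}}{\ell^{7/4-\eps}}+U^{2-2c}\Big)\ll\frac{H_rT^4\,U}{r^{1-2\nu}\ell^{7/4-\eps}}+\cdots,
\]
and this is $\ll U/r$ once $T^4\le\ell$, $r\le\ell^4$ and $\nu$ is small, using \eqref{eq:subpower} for $H_r\ll r^\nu$.

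For \eqref{eq:plan-target} I use \eqref{eq:smooth-expansion}, together with $f=g*h$ on the coprime part. For $(m,r)=1$ this gives
\[
 A_f(x,\chi)=\sum_{(d,r)=1}h(d)\overline{\chi(d)}A_g(x/d,\chi).
\]
The pieces are then treated as follows.
\begin{itemize}
\item The terms with $s>U^{1/8}$ or $d>U^{1/8}$ are bounded trivially, via \eqref{eq:smooth-tail} and \eqref{eq:h-tail}, giving a power saving.
\item For the remaining terms, $x=t/(sd)\ge U^{1/4}$, so Proposition~\ref{prop:GHS-long} applies with the same $\Eset$ throughout; the fact that $\Eset$ does not depend on $v$ is essential here.
\item Minkowski's inequality over $s$ and $d$, with weights $\max_\chi|\Gamma_\chi(s;r)|/s$ and $|h(d)|/d$, bounds the norm by $W_r\cdot O(1)\cdot U\ell^{-7/8}(\log\ell)^{15/8}$.
\item Finally \eqref{eq:W} and \eqref{eq:subpower} give $W_r\ll r^{-1/2+\nu}$.
\end{itemize}

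The main obstacle is the bookkeeping of powers of $\ell$. The saving $\ell^{-7/4}$ has to beat $T^3$ from partial summation, $T$ from $G$, and $r^{2\nu}$. If $T^4\ell^{-7/4+\eps}$ with $T\le\ell^{1/4}$ turns out to be too tight, I will refine the partial summation to lose only $T^2$, by integrating against $\theta$ before taking the maximum.
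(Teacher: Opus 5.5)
Your proposal is correct and is essentially the paper's proof. The paper likewise bounds $(1-P_\Eset)F_y$ at $\theta=0$ using Proposition~\ref{prop:GHS-long}, Lemma~\ref{lem:companion}, \eqref{eq:smooth-expansion} and \eqref{eq:W} (this is Lemma~\ref{lem:long-remainder}), transfers the bound to $|\theta|\le T/U$ by partial summation, and concludes with Lemma~\ref{lem:coarse-grid} using about $H_rT$ equally spaced points.

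There is one arithmetic slip. With $t\ge U^{1/3}$ and $s,d\le U^{1/8}$ you only get $t/(sd)\ge U^{1/12}$, not $U^{1/4}$, so take the threshold $t\ge U^{1/2}$ as the paper does. Below that threshold the trivial bound $\sum_{\chi\notin\Eset}|C_\chi(t,0)|^2\le t^2\le U$ is harmless. Your fallback plan is also unnecessary: $T^4\ell^{-7/4}\le\ell^{-3/4}$ leaves ample room.
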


The proof rests on the following bound at $\theta=0$ for the part of
$F_y$ orthogonal to a bounded set of characters, which is the same for
all $y$.

\begin{lemma}\label{lem:long-remainder}
Let $f\in\BA$, $U\ge U_1(A)$ and $r\le\ell^4$. There is a set $\Eset$
of at most $63$ characters modulo $r$ such that $R_y=(1-P_\Eset)F_y$
satisfies
\begin{equation}\label{eq:long-remainder}
 \|R_y(\cdot,0)\|_{2,r}\ll_A\frac y{\sqrt r}\ell^{-3/4}
 \qquad(U^{1/2}\le y\le U).
\end{equation}
For $0\le y<U^{1/2}$ one has $\|R_y(\cdot,0)\|_{2,r}\le Ay$.
\end{lemma}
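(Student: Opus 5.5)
We prove the lemma for $f\in\mathcal B_1$ first; by Lemma~\ref{lem:linear-A} this is enough, since $\|R_y(\cdot,0)\|_{2,r}$ is a seminorm in $(f(n))_{n\le U}$ once $\Eset$ is fixed. The catch is that the exceptional set depends on the function. So instead we work directly with $f\in\BA$ through the decomposition $f=g*h$ of Lemma~\ref{lem:companion}. This gives one completely multiplicative $g$ with $|g|\le1$, and we take $\Eset$ to be the set attached to $g$ by Proposition~\ref{prop:GHS-long}; it is independent of $y$.

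The trivial bound for $y<U^{1/2}$ is immediate. Since $1-P_\Eset$ is an orthogonal projection, $\|R_y(\cdot,0)\|_{2,r}\le\|F_y(\cdot,0)\|_{2,r}\le\sum_{n\le y}|f(n)|\le Ay$.

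For $U^{1/2}\le y\le U$, orthogonality and \eqref{eq:smooth-expansion} give
\[
 \|R_y(\cdot,0)\|_{2,r}^2=\sum_{\chi\notin\Eset}|C_\chi(y,0)|^2,\qquad
 C_\chi(y,0)=\sum_{s\in\Sr,\,s\le y}f(s)\Gamma_\chi(s;r)A_f(y/s,\chi).
\]
Write $A_f(x,\chi)=\sum_{d\le x}h(d)\overline{\chi(d)}A_g(x/d,\chi)$. We then apply Minkowski's inequality in $\ell^2(\chi\notin\Eset)$ to the double sum over $s$ and $d$, and split into two ranges.

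\textbf{Main range, $sd\le U^{1/4}$.} Here $y/(sd)\ge U^{1/4}$, so \eqref{eq:GHS-long} applies with $v=y/(sd)$. Using $|f(s)|\le A$, this range contributes at most
\[
 A\sum_{s}\frac{\max_\chi|\Gamma_\chi(s;r)|}{s}\sum_d\frac{|h(d)|}{d}\cdot y\ell^{-7/8}(\log\ell)^{15/8}
 \ll_A W_r\,y\ell^{-7/8}(\log\ell)^{15/8}.
\]
By \eqref{eq:W} and \eqref{eq:subpower} with $\nu=1/64$ we have $W_r\ll r^{-1/2+1/64}$. Since $r\le\ell^4$, the factor $r^{1/64}\le\ell^{1/16}$. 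The exponent bookkeeping $7/8-1/16>3/4$ leaves room to absorb the $\log\ell$ power, giving $\ll_A y r^{-1/2}\ell^{-3/4}$.

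\textbf{Tail range, $sd>U^{1/4}$.} For each $\chi$, bound $|A_g(x,\chi)|\le x$ and take the $\ell^2$ norm over all $\varphi(r)\le\ell^4$ characters, losing a factor $\ell^2$. This range is then at most
\[
 A\ell^2 y\sum_{sd>U^{1/4}}\frac{|h(d)|}{sd}.
\]
Either $s>U^{1/8}$ or $d>U^{1/8}$. Using \eqref{eq:smooth-tail}, \eqref{eq:smooth-sums} and \eqref{eq:h-tail}, this is $\ll_A\ell^2y(r^2U^{-1/16}+H_rU^{-1/32})$. That is a negative power of $U$ times a power of $\ell$, hence far below $yr^{-1/2}\ell^{-3/4}$ once $U\ge U_1(A)$.

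The main obstacle is the first step: making sure that a single exceptional set serves all $\chi$-sums. This holds because only $g$ enters Proposition~\ref{prop:GHS-long}, and twisting by $\overline{\chi(d)}$ and scaling by $s$ keep us inside the admissible range of $v$. The requirement $r\le\ell^4$ of that proposition is exactly our hypothesis.
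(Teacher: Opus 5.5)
Your proof is correct and follows the paper's argument. It uses the same exceptional set attached to $g$ via Proposition~\ref{prop:GHS-long}, Parseval with \eqref{eq:smooth-expansion}, and the bounds \eqref{eq:h-norm}, \eqref{eq:W}, \eqref{eq:smooth-tail}, \eqref{eq:h-tail} and \eqref{eq:subpower}. The only difference is organizational: you split the double sum once at $sd\le U^{1/4}$, whereas the paper first proves \eqref{eq:f-long-character} for $A_f(z,\chi)$ and then splits the $s$-sum at $U^{1/8}$; and you were right to drop the reduction to $\mathcal B_1$, since the exceptional set depends on $f$.
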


\begin{proof}
Let $g$ and $h$ be as in Lemma~\ref{lem:companion}, and let $\Eset$ be
the set given by Proposition~\ref{prop:GHS-long} for $g$. Write
$E_0=\ell^{-7/8}(\log\ell)^{15/8}$. Since $f=g*h$,
\[
 A_f(z,\chi)=\sum_{d\le z}h(d)\overline{\chi(d)}A_g(z/d,\chi).
\]
We claim that, uniformly for $U^{3/8}\le z\le U$,
\begin{equation}\label{eq:f-long-character}
 \bigg(\sum_{\chi\notin\Eset}|A_f(z,\chi)|^2\bigg)^{1/2}
 \ll_Az\big(E_0+\sqrt r\,U^{-1/32}\big).
\end{equation}
For $d\le U^{1/8}$ we have $U^{1/4}\le z/d\le U$, and Minkowski's
inequality, \eqref{eq:GHS-long} and \eqref{eq:h-norm} with $\sigma=1$
show that these $d$ contribute $\ll_AzE_0$. For $d>U^{1/8}$ we use
$|A_g(z/d,\chi)|\le z/d$ and \eqref{eq:h-tail}, and that there are at
most $r$ characters.

Now let $U^{1/2}\le y\le U$. By Parseval's identity for
\eqref{eq:char-expansion},
\[
 \|R_y(\cdot,0)\|_{2,r}^2=\sum_{\chi\notin\Eset}|C_\chi(y,0)|^2,
\]
and we apply \eqref{eq:smooth-expansion}. For $s\le U^{1/8}$ we have
$y/s\ge U^{3/8}$, so by Minkowski's inequality,
\eqref{eq:f-long-character} and \eqref{eq:W}, the terms with
$s\le U^{1/8}$ contribute to $\|R_y(\cdot,0)\|_{2,r}$ at most
\[
 C_A\frac y{\sqrt r}H_rV_r\big(E_0+\sqrt r\,U^{-1/32}\big).
\]
For $s>U^{1/8}$, the bounds $|A_f(y/s,\chi)|\le Ay/s$ and
$|\Gamma_\chi(s;r)|\le1$, followed by \eqref{eq:smooth-tail}, give a
contribution at most $C_Ayr^{5/2}U^{-1/16}$. By \eqref{eq:subpower}
with $\nu=1/128$ and $r\le\ell^4$, both expressions are at most
$C_Ay\ell^{-3/4}/\sqrt r$ for sufficiently large $U$. This proves
\eqref{eq:long-remainder}. Finally, an orthogonal projection does not
increase the norm and $|F_y(b,0)|\le Ay$, which gives the last
assertion.
\end{proof}

\begin{proof}[Proof of Proposition~\ref{prop:energy-small}]
By \eqref{eq:energy-A-transfer}, we may assume $f\in\mathcal B_1$. All
constants and all lower bounds for $U$ in this proof are then absolute.
Let $\Eset$ and $R_y$ be as in Lemma~\ref{lem:long-remainder}; the same
set $\Eset$ is used for every $y$ and $\theta$. Partial summation gives
\begin{equation}\label{eq:Abel-remainder}
 R_y(b,\theta)=\e(\theta y)R_y(b,0)
 -2\pi i\theta\int_0^y\e(\theta v)R_v(b,0)\,dv.
\end{equation}
We use \eqref{eq:long-remainder} for $v\ge U^{1/2}$ and the bound
$\|R_v(\cdot,0)\|_{2,r}\le v$ for $v<U^{1/2}$. The range $v<U^{1/2}$
contributes $O(T)$ to the norm of the second term in
\eqref{eq:Abel-remainder}, which is smaller than the right side of
\begin{equation}\label{eq:displaced-remainder}
 \|R_y(\cdot,\theta)\|_{2,r}\ll\frac U{\sqrt r}T\ell^{-3/4}
 \qquad(U^{1/2}\le y\le U,\ |\theta|\le T/U).
\end{equation}

Set $G=\lceil H_rT\rceil$ and $y_j=\lfloor jU/G\rfloor$ for
$0\le j\le G$. For sufficiently large $U$ we have $y_1\ge U^{1/2}$,
and $y_{j+1}-y_j\le\Delta=\lceil U/G\rceil$ for every $j$. Bounding the
maximum over $j$ by the sum and using \eqref{eq:displaced-remainder},
we obtain
\begin{align}
 \frac1{\varphi(r)}\sum_{b\bmod r}^{*}\int_{-T/U}^{T/U}
 \max_{1\le j\le G}|R_{y_j}(b,\theta)|^2\,d\theta
 &\le\sum_{j=1}^G\int_{-T/U}^{T/U}
 \|R_{y_j}(\cdot,\theta)\|_{2,r}^2\,d\theta\notag\\
 &\ll\frac UrH_rT^4\ell^{-3/2}\ll\frac Ur.\label{eq:small-endpoints}
\end{align}
Here $T^4\le\ell$, and \eqref{eq:subpower} with $\nu=1/128$ gives
$H_r\ll r^{1/128}\le\ell^{1/32}$, so that
$H_rT^4\ell^{-3/2}\ll\ell^{-15/32}$.

The interval $I=[-T/U,T/U]$ has length $2T/U\le1/r$. Hence
\eqref{eq:finite-rank-energy}, with $J\le63$, gives
\[
 \frac1{\varphi(r)}\sum_{b\bmod r}^{*}\int_I
 \max_{y\le U}|P_\Eset F_y(b,\theta)|^2\,d\theta\ll\frac Ur.
\]

We now apply Lemma~\ref{lem:coarse-grid} with this interval $I$, with
$P=P_\Eset$ and with the points $y_j$, so that $W=y_G=\lfloor U\rfloor$.
The coefficients $f(n)$ with $n\le U$ are supported on
$n\le\lfloor U\rfloor$, so the lemma applies with $\lfloor U\rfloor$ in
place of $U$, and this does not change the sums $F_y$. Since
$M_f(U;b/r+\theta)=\max_{y\le U}|F_y(b,\theta)|$, the left side of
\eqref{eq:coarse-grid-transfer} is $\EE(U,r,T)$. Since $R_{y_0}=0$,
the first two terms on the right of \eqref{eq:coarse-grid-transfer}
have just been bounded. Since $\sum_{n\le U}|f(n)|^2\le U$ and
$|I|=2T/U$, the third term is
\begin{equation}\label{eq:cell-energy}
 \ll\frac{T(\Delta+r)}{\varphi(r)}\ll\frac Ur+TH_r\ll\frac Ur.
\end{equation}
The last step holds because $r$, $T$ and $H_r$ are at most fixed
powers of $\ell$. This proves \eqref{eq:energy-small} for
$f\in\mathcal B_1$.
\end{proof}

\subsection{A fixed exceptional set for short intervals}\label{sec:short-characters}

We now bound sums of $f(n)\overline{\chi(n)}$ over short intervals,
with a set of exceptional characters that is the same for all the
intervals used in Section~\ref{sec:large-energy}. The first lemma and
its proof follow \cite[Proposition~5.2]{GHS};
Proposition~\ref{prop:GHS-short-input}(ii) allows $|t_j|$ to be as
large as a fixed power of $\log X$.

\begin{lemma}\label{lem:repulsion}
Let $g$ be completely multiplicative with $|g(n)|\le1$, let $K\ge1$ be
fixed and let $J\ge1$ be an integer. Let $X$ be sufficiently large,
$r\log X\le y\le\sqrt X$, and let $\chi_1,\ldots,\chi_J$ be distinct
characters modulo $r$. Put
\[
 \mathcal G_\chi(s;y)
 =\prod_{p>y}\big(1-g(p)\overline{\chi(p)}p^{-s}\big)^{-1}.
\]
Then, for all real $\delta_j\ge0$ and $t_j$ with $|t_j|\le(\log X)^K$,
\begin{equation}\label{eq:repulsion}
 \min_{1\le j\le J}
 \log\big|\mathcal G_{\chi_j}(1+1/\log X+\delta_j+it_j;y)\big|
 \le\frac1{\sqrt J}\log\frac{\log X}{\log y}+O_K(\sqrt J).
\end{equation}
\end{lemma}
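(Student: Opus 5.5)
We follow the repulsion argument of \cite[Proposition~5.2]{GHS}. Write $\sigma_j=1+1/\log X+\delta_j$ and $s_j=\sigma_j+it_j$. Since $|g(p)|\le1$, the Euler product gives
\[
 \log\big|\mathcal G_{\chi_j}(s_j;y)\big|
 =\operatorname{Re}\sum_{p>y}\frac{g(p)\overline{\chi_j(p)}}{p^{s_j}}+O\Big(\sum_{p>y}p^{-2}\Big),
\]
and the error is $O(1)$. Let $m$ denote the minimum on the left of \eqref{eq:repulsion}. Summing over $j$ gives
\[
 Jm\le\operatorname{Re}\sum_{p>y}g(p)\,p^{-1/\log X}\,\overline{Z(p)}+O(J),
 \qquad Z(p)=\sum_{j=1}^J\chi_j(p)p^{-1-\delta_j+it_j}.
\]
By Cauchy--Schwarz, with weights $1/p$ and $|g(p)|\le1$,
\[
 Jm\le\Big(\sum_{p>y}\frac{p^{-2/\log X}}p\Big)^{1/2}\Big(\sum_{p>y}p\,p^{-2/\log X}|Z(p)|^2\Big)^{1/2}+O(J).
\]

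The first factor is $\big(\log\frac{\log X}{\log y}+O(1)\big)^{1/2}$ by Mertens' theorem, since the weight $p^{-2/\log X}$ effectively truncates at $p\approx X$.

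Expanding $|Z(p)|^2$, the second factor becomes a double sum over $j,k$ of
\[
 \operatorname{Re}\sum_{p>y}\frac{\chi_j\overline{\chi_k}(p)}{p^{1+2/\log X+\delta_j+\delta_k-i(t_j-t_k)}}.
\]
Each term is $\log|L_y(\cdot,\chi_j\overline{\chi_k})|+O(1)$, where $L_y$ is as in \eqref{eq:rough-L}.
\begin{itemize}
\item For the $J$ diagonal terms $j=k$, the character is principal, and since $\delta_j\ge0$ the sum is at most $\sum_{p>y}p^{-1-2/\log X}=\log\frac{\log X}{\log y}+O(1)$.
\item For $j\ne k$, the character $\chi_j\overline{\chi_k}$ is nonprincipal modulo $r$, because the $\chi_j$ are distinct. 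The real part of the exponent is at least $1+2/\log X$, and $|t_j-t_k|\le2(\log X)^K$. Proposition~\ref{prop:GHS-short-input}(ii) applies, with $X$ replaced by $X^{1/2}$ and $K$ by $K+1$ so that the range of $t$ and the condition $y\ge r\log X^{1/2}$ are both satisfied. It gives that each off-diagonal term is $O_K(1)$.
\end{itemize}
Thus the second factor is at most $\big(J\log\frac{\log X}{\log y}+O_K(J^2)\big)^{1/2}$.

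Write $\Lambda=\log\frac{\log X}{\log y}$. Combining the two factors,
\[
 Jm\le(\Lambda+O(1))^{1/2}\big(J\Lambda+O_K(J^2)\big)^{1/2}+O(J).
\]
Then $\sqrt{a+b}\le\sqrt a+\sqrt b$ and $\sqrt{\Lambda}\,J\le\frac12(\Lambda+J^2)$ give $Jm\le\sqrt J\,\Lambda+O_K(J^{3/2})$. Dividing by $J$ yields \eqref{eq:repulsion}.

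The step I expect to be the main obstacle is the off-diagonal bound. The shifts $\delta_j+\delta_k$ and the imaginary parts must stay within the hypotheses of Proposition~\ref{prop:GHS-short-input}(ii), and this requires checking that $y\ge r\log X$ suffices after $X$ is replaced by $X^{1/2}$; this is the reason for the factor $2/\log X$ above. One must also check that the $O(1)$ errors from the prime-power terms are uniform. The restriction $y\le\sqrt X$ guarantees $\Lambda\ge\log 2$, so the Mertens estimates are not degenerate.
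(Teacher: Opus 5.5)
Your route is the paper's: the repulsion argument of \cite[Proposition~5.2]{GHS}, with Cauchy--Schwarz, the diagonal bounded by Mertens' theorem, and the off-diagonal terms bounded above by Proposition~\ref{prop:GHS-short-input}(ii). Discarding prime powers at a cost $O(1)$ per character is harmless. Two steps, however, do not work as written.

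\textbf{The Cauchy--Schwarz display.} Your sum is $\sum_{p>y}g(p)p^{-1/\log X}\overline{Z(p)}$, in which the factor $p^{-1/\log X}$ appears once. The two factors you wrote are the Cauchy--Schwarz bound for $\sum_{p>y}p^{-2/\log X}|Z(p)|$ instead.

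The clean bookkeeping uses $w_p=p^{-1-1/\log X}$ as the measure, which is what the paper does with the weights $\Lambda(n)/(n^{\sigma_0}\log n)$. This gives the bound $(\sum_{p>y}w_p)^{1/2}(\sum_{p>y}w_p\,p^2|Z(p)|^2)^{1/2}$. The off-diagonal terms are then at real part $1+1/\log X+\delta_j+\delta_k$, so Proposition~\ref{prop:GHS-short-input}(ii) applies with $X$ itself. The passage to $X^{1/2}$, and your explanation of the factor $2/\log X$, are not needed.

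Your display can be rescued: replacing $p^{-1/\log X}$ by $p^{-2/\log X}$ costs at most $J\sum_{p>y}p^{-1}(p^{-1/\log X}-p^{-2/\log X})=O(J)$. That step has to be stated, though.

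\textbf{The final algebra.} This is the more serious problem, because the step as described does not give \eqref{eq:repulsion}. Applying $\sqrt{a+b}\le\sqrt a+\sqrt b$ to $(\Lambda+O(1))^{1/2}(J\Lambda+O_K(J^2))^{1/2}$ produces a cross term of size $\asymp_K J\sqrt\Lambda$. The inequality $J\sqrt\Lambda\le\frac12(\Lambda+J^2)$ turns it into $O_K(\Lambda)+O_K(J^2)$. After dividing by $J$ you obtain only
\[
m\le\Lambda/\sqrt J+O_K(\Lambda/J)+O_K(J).
\]
Here $\Lambda=\log(\log X/\log y)$ is unbounded, so $O_K(\Lambda/J)$ is not $O_K(\sqrt J)$. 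The coefficient of $\Lambda$ must be exactly $J^{-1/2}$. This is precisely what Lemma~\ref{lem:short-characters} needs, with $J=4096$ and the exponent $1/64$ in the definition of $\Eset$.

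The fix is to keep one quantity in both factors. Put $Z_1=\sum_{p>y}w_p$, so that the first factor is $Z_1^{1/2}$ and the diagonal is at most $JZ_1$. By the AM--GM inequality,
\[
\sqrt{Z_1(JZ_1+C_KJ^2)}\le\sqrt J\,(Z_1+C_KJ/2),
\]
and $Z_1=\Lambda+O(1)$. This gives $Jm\le\sqrt J\,\Lambda+O_K(J^{3/2})$, as required. The paper concludes the same way, via $\sqrt{Z(JZ+O_K(J^2))}\le\sqrt J\,Z+O_K(J^{3/2})$.
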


\begin{proof}
Write $p(n)$ for the least prime factor of an integer $n>1$, put
$\sigma_0=1+1/\log X$ and
\[
 Z=\sum_{p(n)>y}\frac{\Lambda(n)}{n^{\sigma_0}\log n}
 =\log\frac{\log X}{\log y}+O(1).
\]
The sum is $\log\zeta(\sigma_0)=\log\log X+O(1)$ less the contribution
of the primes $p\le y$, which is $\log\log y+O(1)$ by Mertens' theorem
\cite[Theorem~2.7(d)]{MV2},
since $0\le p^{-k}-p^{-k\sigma_0}\le kp^{-k}(\log p)/\log X$ for
$k\ge1$, $\sum_{p\le y}(\log p)/p\ll\log y$ by \cite[(2.2)]{Robles},
and $y\le\sqrt X$. Since $g$ is completely multiplicative,
$\log\mathcal G_\chi(s;y)$ is the sum of
$\Lambda(n)g(n)\overline{\chi(n)}/(n^s\log n)$ over $p(n)>y$ when
$\Re s>1$. Put $s_j=\sigma_0+\delta_j+it_j$. Taking real parts, using
$|g(n)|\le1$ and applying the Cauchy-Schwarz inequality, we obtain
\[
 \sum_{j=1}^J\log|\mathcal G_{\chi_j}(s_j;y)|
 \le\bigg(Z\sum_{p(n)>y}\frac{\Lambda(n)}{n^{\sigma_0}\log n}
 \bigg|\sum_{j=1}^J\overline{\chi_j(n)}n^{-\delta_j-it_j}\bigg|^2
 \bigg)^{1/2}.
\]
We expand the square. The terms with $j=k$ contribute at most $JZ$ to
the sum over $n$. For $j\ne k$, the term $(j,k)$ contributes
\[
 \log L_y\big(\sigma_0+\delta_j+\delta_k+i(t_j-t_k),
 \overline{\chi_j}\chi_k\big),
\]
whose real part is the logarithm of the modulus. The character
$\overline{\chi_j}\chi_k$ is nonprincipal and
$|t_j-t_k|\le2(\log X)^K\le(\log X)^{K+1}$, so
Proposition~\ref{prop:GHS-short-input}(ii), with $K+1$ in place of $K$,
shows that this real part is at most $O_K(1)$. The sum over $n$ is
real, and is therefore at most $JZ+O_K(J^2)$. Hence
\[
 \sum_{j=1}^J\log|\mathcal G_{\chi_j}(s_j;y)|
 \le\sqrt{Z\big(JZ+O_K(J^2)\big)}\le\sqrt J\,Z+O_K(J^{3/2}).
\]
The minimum does not exceed the mean, and \eqref{eq:repulsion} follows
on dividing by $J$.
\end{proof}

The next lemma is the estimate for short intervals. The exceptional
set depends on $K$, but not on the interval.

\begin{lemma}\label{lem:short-characters}
Let $K\ge1$ be fixed, let $f\in\BA$, let $U$ be sufficiently large in
terms of $A$ and $K$, and let $r\le\ell^4$. There is a set $\Eset$ of at
most $4095$ characters modulo $r$ such that, for every
$\chi\notin\Eset$,
\begin{equation}\label{eq:short-character}
 \bigg|\sum_{x<n\le x+z}f(n)\overline{\chi(n)}\bigg|
 \ll_{A,K}z\ell^{-31/32}
\end{equation}
for all $x$ and $z$ with $U^{1/2}\le x\le U$ and $x/\ell^K\le z\le x$.
\end{lemma}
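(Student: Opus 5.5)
First I replace $f$ by its completely multiplicative companion. By Lemma~\ref{lem:companion}, write $f=g*h$ with $|g|\le1$ completely multiplicative and $h$ small on average. Then
\[
 \sum_{x<n\le x+z}f(n)\overline{\chi(n)}=\sum_d h(d)\overline{\chi(d)}\sum_{x/d<m\le(x+z)/d}g(m)\overline{\chi(m)}.
\]
The terms with $d>\ell^{2}$ are handled trivially. Their contribution is $\ll\sum_{d>\ell^2}|h(d)|(z/d+1)$, which by \eqref{eq:h-tail} is $\ll_A z\ell^{-1/2}$ plus $\ll_A x^{3/4}$. The bound $z\ell^{-1/2}$ is too weak, so I instead cut at $d>D_0=\ell^{64}$. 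Then \eqref{eq:h-tail} gives $z\ell^{-16}$, and the $+1$ terms summed over $d\le U^{1/4}$ cost at most $U^{3/16}$, which is negligible since $z\ge U^{1/2}\ell^{-K}$. Beyond $U^{1/4}$ the $h$-tail is again tiny. For $d\le D_0$ the inner sum is a short-interval sum of $u=g\overline{\chi}\in\mathcal C(1)$, at height $x'=x/d\ge U^{1/2}\ell^{-64}$ and of length $z'=z/d\ge x'\ell^{-K}$. Since $z'=x'^{1-\delta}$ with $\delta\asymp K\log\ell/\ell$ small, it suffices to prove $\ll z'\ell^{-31/32}$ for these inner sums, uniformly in the admissible $(x',z')$, with one exceptional set for $g$. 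Summing with weights $|h(d)|/d$ and using \eqref{eq:h-norm} then gives the lemma.

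For the inner sums I apply Proposition~\ref{prop:GHS-short-input}(i) with $u=g\overline{\chi}$. The second term there is $(\log(x'/z')+\log\log x')/\log x'\ll_K\log\ell/\ell$, which is acceptable. It remains to make $(1+M)e^{-M}\ll\ell^{-31/32}$. It suffices that $M\ge\tfrac{31}{32}\log\ell+2\log\log\ell$, that is,
\[
 \max_{|t|\le\ell^{K+1}}|F_u(1+1/\log x'+it)|\ll\log x'\,\ell^{-31/32}(\log\ell)^{-2}.
\]
The admissible $x'$ all satisfy $\log x'\asymp\ell$. I therefore fix $X=U$ and $y=r\log X\le\ell^5$. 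I split $F_u(s)=\prod_{p\le y}(\cdots)\cdot\mathcal G_\chi(s;y)$. The small-prime product is bounded by $\prod_{p\le y}(1-1/p)^{-1}\ll\log y\ll\log\ell$, uniformly in the real part $\ge1$.

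Next I choose the exceptional set by a greedy procedure, which also removes its dependence on $x'$. Call $\chi$ bad if there are some $\sigma\ge1+1/\log X$, which covers all $1/\log x'\ge1/\log U$, and some $|t|\le(\log X)^{K+1}$ with $\log|\mathcal G_\chi(\sigma+it;y)|>\tfrac1{64}\log(\log X/\log y)+C_K$. If there were $J=4096$ bad characters, I would take their witnesses $\delta_j=\sigma_j-\sigma_0\ge0$ and $t_j$ and apply Lemma~\ref{lem:repulsion} with $K+1$ in place of $K$. That lemma gives a $j$ with $\log|\mathcal G_{\chi_j}|\le\tfrac1{64}\log(\log X/\log y)+O_K(64)$, a contradiction once $C_K$ is chosen large. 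So there are at most $4095$ bad characters, and I put them in $\Eset$. For $\chi\notin\Eset$ this gives
\[
 |F_u|\ll_K\log\ell\,(\ell/\log\ell)^{1/64}\ll\ell^{1/64}\log\ell.
\]
Then $e^{-M}\ll\ell^{1/64-1}\log\ell$, so $(1+M)e^{-M}\ll\ell^{-63/64}(\log\ell)^2\ll\ell^{-31/32}$, with room to spare.

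The main obstacle is the uniformity. The set $\Eset$ must be independent of $x$, $z$ and $d$, while Corollary~1.4 of \cite{GHS} uses $\sigma=1+1/\log x'$ and a $t$-range depending on $x'/z'$. This is handled as above. The $t$-range $(x'/z')\log x'\le\ell^{K}\cdot\ell\le(\log X)^{K+1}$ is covered by one fixed $K$. The shift $\delta_j\ge0$ in Lemma~\ref{lem:repulsion} absorbs the variable real part, since $1/\log x'\ge1/\log U$. The remaining care is to check that $x'$ is large enough for Proposition~\ref{prop:GHS-short-input}(i), and that $z'\ge x'^{1-\delta}$ with $\delta<1/4$. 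Both hold for $U$ large in terms of $K$.
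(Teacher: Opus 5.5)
Your proposal is essentially the paper's proof: the same factorization $f=g*h$, the same exceptional set, and Proposition~\ref{prop:GHS-short-input}(i) applied to $g\overline\chi$. The exceptional set consists of the characters for which $\mathcal G_\chi(\sigma+it;y)$ exceeds $C_K(\log X/\log y)^{1/64}$ somewhere on $\sigma\ge1+1/\log X$, $|t|\le(\log X)^{O(K)}$, and it is capped at $4095$ by Lemma~\ref{lem:repulsion} with $J=4096$. Cutting $d$ at $\ell^{64}$ rather than $U^{1/8}$, and taking $X=U$ rather than $4U$, are harmless. The one omission is the endpoint $z=x$, where $\delta=0$, which Proposition~\ref{prop:GHS-short-input}(i) excludes; your $\delta\asymp K\log\ell/\ell$ is only an upper bound. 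The paper handles this case by splitting $(x,2x]$ into two halves, which is why it takes $X=4U$. With your $X=U$ it is simpler to drop the at most one integer in $(2x-\tfrac12,2x]$, at cost $O(A)$, and then use $z=x-\tfrac12$.
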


\begin{proof}
Let $g$ and $h$ be as in Lemma~\ref{lem:companion}. Put $X=4U$ and
$y=r\log X$, so that $y\le\sqrt X$ and $\log\ell\le\log y\ll\log\ell$
for large $U$. For a constant $C_K$, let $\Eset$ be the set of
characters $\chi$ modulo $r$ for which
\[
 \sup_{\substack{\sigma\ge1+1/\log X\\|t|\le(\log X)^{K+3}}}
 |\mathcal G_\chi(\sigma+it;y)|
 >C_K\Big(\frac{\log X}{\log y}\Big)^{1/64}.
\]
If $\Eset$ contained $4096$ distinct characters $\chi_j$, there would
be for each of them a point $\sigma_j+it_j$ in the stated range at which
$|\mathcal G_{\chi_j}(\sigma_j+it_j;y)|$ exceeds the right side. This
contradicts Lemma~\ref{lem:repulsion}, applied with $J=4096$ and with
$K+3$ in place of $K$, once $\log C_K$ is larger than the term
$O_K(\sqrt J)$ in \eqref{eq:repulsion}. With this choice of $C_K$, the
set $\Eset$ has at most $4095$ elements. It depends only on $g$, $r$,
$U$ and $K$.

Write $G_\chi(s)=\sum_{n\ge1}g(n)\overline{\chi(n)}n^{-s}$. This is the
product of $\mathcal G_\chi(s;y)$ and the Euler factors at the primes
$p\le y$, and for $\Re s>1$ these factors have a product of modulus at
most $\prod_{p\le y}(1-p^{-1})^{-1}\ll\log y\ll\log\ell$. If
$U^{1/4}\le x\le2U$, then $1+1/\log x\ge1+1/\log X$. Hence, for
$\chi\notin\Eset$, $U^{1/4}\le x\le2U$ and $|t|\le(\log X)^{K+3}$,
\begin{equation}\label{eq:G-nonexceptional}
 |G_\chi(1+1/\log x+it)|\ll_K\ell^{1/64}(\log\ell)^{63/64}.
\end{equation}

Now let $\chi\notin\Eset$, $U^{1/4}\le x\le2U$ and $x/\ell^K\le z<x$.
We apply Proposition~\ref{prop:GHS-short-input}(i) to
$u=g\overline\chi\in\mathcal C(1)$, with $\delta=\log(x/z)/\log x$. For large $U$ we
have $0<\delta<1/4$, because $\log(x/z)\le K\log\ell$ and
$\log x\ge\ell/4$, and
$(x/z)\log x\le\ell^K\log(2U)\le(\log X)^{K+3}$. By \eqref{eq:short-M}
and \eqref{eq:G-nonexceptional}, we have $e^{-M}\le b_U$, where
$b_U=C_K'\ell^{-63/64}(\log\ell)^{63/64}$ for a suitable constant
$C_K'\ge1$. For sufficiently large $U$ we have $b_U<1$, so that
$M\ge\log(1/b_U)>0$. Since $(1+m)e^{-m}$ is decreasing for $m\ge0$,
\[
 (1+M)e^{-M}\le\big(1+\log(1/b_U)\big)b_U
 \ll_K\ell^{-63/64}(\log\ell)^2.
\]
The second term on the right of \eqref{eq:short-halasz} is
$\ll_K(\log\ell)/\ell$. Therefore
\begin{equation}\label{eq:g-short}
 \bigg|\sum_{x<n\le x+z}g(n)\overline{\chi(n)}\bigg|
 \ll_Kz\ell^{-31/32}\qquad(\chi\notin\Eset)
\end{equation}
for $U^{1/4}\le x\le2U$ and $x/\ell^K\le z<x$. If $z=x$ and
$U^{1/4}\le x\le U$, we divide $(x,2x]$ into the intervals $(x,3x/2]$
and $(3x/2,2x]$. Their left endpoints are in $[U^{1/4},2U]$, and their
length $x/2$ is smaller than the left endpoint and at least $\ell^{-K}$
times it. Thus \eqref{eq:g-short} also holds for $z=x$ when
$U^{1/4}\le x\le U$.

Finally let $U^{1/2}\le x\le U$ and $x/\ell^K\le z\le x$. Since
$f=g*h$,
\[
 \sum_{x<n\le x+z}f(n)\overline{\chi(n)}
 =\sum_{d\le x+z}h(d)\overline{\chi(d)}
 \sum_{x/d<m\le(x+z)/d}g(m)\overline{\chi(m)}.
\]
For $d\le U^{1/8}$ we have $U^{3/8}\le x/d\le U$, and the ratio of the
length $z/d$ to the left endpoint $x/d$ is $z/x$. By \eqref{eq:g-short}
and \eqref{eq:h-norm} with $\sigma=1$, these $d$ contribute
$\ll_{A,K}z\ell^{-31/32}$. For $d>U^{1/8}$ the inner sum has at most
$z/d+1$ terms, and \eqref{eq:h-tail} gives
\[
 \sum_{U^{1/8}<d\le2x}|h(d)|\Big(\frac zd+1\Big)
 \ll_AzU^{-1/32}+(2x)^{3/4}.
\]
Since $z\ge x/\ell^K$ and $x\ge U^{1/2}$, both terms are
$\ll_Kz\ell^{-31/32}$. This proves \eqref{eq:short-character}.
\end{proof}

In Section~\ref{sec:large-energy} the lemma is applied to the inner
sums of \eqref{eq:smooth-expansion}, over intervals of arbitrary
length, in the following form.

\begin{corollary}\label{cor:short-dilated}
Let $f\in\BA$, let $U$ be sufficiently large in terms of $A$, and let
$r\le\ell^4$. Let $\Eset$ be the set of
Lemma~\ref{lem:short-characters} with $K=16$, and write
$\epsU=\ell^{-31/32}$. Let $U^{3/4}\le v\le v'\le U$ and
$1\le s\le U^{1/8}$. Then, for every $\chi\notin\Eset$,
\begin{equation}\label{eq:short-dilated}
 \bigg|\sum_{v/s<m\le v'/s}f(m)\overline{\chi(m)}\bigg|
 \ll_A\frac{v'-v}s\epsU+\frac Us\ell^{-16}.
\end{equation}
\end{corollary}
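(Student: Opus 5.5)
The plan is to reduce to Lemma~\ref{lem:short-characters} with $K=16$ after rescaling. Put $x=v/s$ and $w=v'/s$. Since $v\ge U^{3/4}$ and $s\le U^{1/8}$, we have $x\ge U^{5/8}\ge U^{1/2}$, and also $x\le U$ and $w\le U$. The sum in \eqref{eq:short-dilated} runs over $x<m\le w$, an interval of length $z_0=(v'-v)/s$. We split according to the size of $z_0$ relative to $x/\ell^{16}$.

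\emph{Case 1: $z_0<x/\ell^{16}$.} Here we bound trivially using $|f(m)|\le A$. The sum is at most $A(z_0+1)$, and
\[
 A(z_0+1)\le A\Big(\frac x{\ell^{16}}+1\Big)\le A\Big(\frac U{s\ell^{16}}+1\Big).
\]
The extra $A$ is harmless, since $U/(s\ell^{16})\ge U^{7/8}\ell^{-16}\ge1$. This gives the second term on the right of \eqref{eq:short-dilated}.

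\emph{Case 2: $x/\ell^{16}\le z_0$.} If $z_0\le x$, Lemma~\ref{lem:short-characters} applies directly with $z=z_0$. It gives $\ll_A z_0\ell^{-31/32}=\frac{v'-v}{s}\epsU$ for $\chi\notin\Eset$, since $U^{1/2}\le x\le U$ and $x/\ell^{16}\le z_0\le x$.

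If $z_0>x$, the interval $(x,w]$ must be subdivided dyadically, because the lemma requires $z\le x$. Take points $x=x_0<x_1<\cdots<x_k=w$ with $x_{i+1}=\min(2x_i,w)$, so that each piece $(x_i,x_{i+1}]$ has length at most $x_i$. Every left endpoint lies in $[U^{1/2},U]$. A piece with length at least $x_i/\ell^{16}$ contributes $\ll_A(x_{i+1}-x_i)\epsU$ by the lemma. Only the last piece can be shorter than that, and it is bounded trivially by $A(x_i\ell^{-16}+1)\ll A\,U/(s\ell^{16})$. Summing the lengths of the good pieces gives at most $z_0$, so the total is $\ll_A z_0\epsU+U s^{-1}\ell^{-16}$.

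The key point is that $\Eset$ in Lemma~\ref{lem:short-characters} is uniform over all $x$ and $z$ in its range. That uniformity is what lets one exceptional set serve all the pieces and all the $s$.

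The only delicate step is the bookkeeping of the ranges: the endpoints $x_i$ must lie in $[U^{1/2},U]$, and $U$ must be sufficiently large in terms of $A$ for the lemma, with $K=16$ fixed. There is no real obstacle beyond this, since non-integer endpoints are handled by the floor convention.
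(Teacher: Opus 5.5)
Your proposal is correct and is essentially the paper's argument: you peel off dyadic pieces $(x_i,2x_i]$, apply Lemma~\ref{lem:short-characters} to each with $z=x_i$, and treat the final remainder either by the lemma (if its length is at least $x_i\ell^{-16}$) or trivially by $\ll_A (U/s)\ell^{-16}$, using that $\Eset$ is uniform over all admissible $x,z$. Your range checks ($U^{1/2}\le x_i\le U/s\le U$) are exactly the ones the paper makes.
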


\begin{proof}
The sum is over an interval with left endpoint $v/s\ge U^{5/8}$,
length $(v'-v)/s$ and right endpoint at most $U/s$. As long as the length
of the remaining interval exceeds its left endpoint $x$, we remove the
interval $(x,2x]$, to which \eqref{eq:short-character} applies with
$z=x$. This leaves an interval $(x,x+z]$ with $z\le x\le U/s$. If
$z\ge x/\ell^{16}$, then \eqref{eq:short-character} applies to it as
well. Otherwise it contains at most $z+1\le(U/s)\ell^{-16}+1$ integers,
and since $|f(m)|\le A$ its contribution is $\ll_A(U/s)\ell^{-16}$. All
the left endpoints lie between $U^{1/2}$ and $U$, and the lengths of
the intervals to which \eqref{eq:short-character} has been applied
have sum at most $(v'-v)/s$. This proves \eqref{eq:short-dilated}.
\end{proof}

\subsection{Wider arcs}\label{sec:large-energy}

The second estimate allows $T$ up to $\ell^8$, for a smaller range of
$r$.

\begin{proposition}\label{prop:energy-large}
There is an absolute constant $U_0$ such that, if $f\in\BA$,
$U\ge U_0$, $r\le(\log U)^{15/8}$ and $1\le T\le(\log U)^8$, then
\begin{equation}\label{eq:energy-large}
 \EE(U,r,T)\ll A^2\frac Ur,
\end{equation}
with an absolute implied constant.
\end{proposition}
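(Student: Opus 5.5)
By \eqref{eq:energy-A-transfer} it suffices to take $f\in\mathcal B_1$. I would follow the skeleton of the proof of Proposition~\ref{prop:energy-small}, but with a different bound for the remainder $R_y$. Partial summation over $\theta$ costs a factor $T$, which is too much once $T$ reaches $\ell^8$. So I would instead control the displaced sums by Gallagher's lemma, with the short-interval estimates of Corollary~\ref{cor:short-dilated} as input.

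Let $\Eset$ be the set of Corollary~\ref{cor:short-dilated} with $K=16$; it has at most $4095$ characters and is independent of the interval. Put $I=[-T/U,T/U]$, $G=\lceil H_rT\rceil$ and $y_j=\lfloor jU/G\rfloor$, and apply Lemma~\ref{lem:coarse-grid} with $P=P_\Eset$. Since $|I|\le 1/r$ (because $rT\le\ell^{8+15/8}\ll U$), the projected term is $\ll U/r$ by \eqref{eq:finite-rank-energy} with $J\le4095$. The cell term is $\ll T(\Delta+r)/\varphi(r)\ll U/r+TH_r\ll U/r$, exactly as in \eqref{eq:cell-energy}. It remains to bound the grid term. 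Bounding the maximum by the sum, this is at most
\[
\sum_{j\le G}\int_I\|R_{y_j}(\cdot,\theta)\|_{2,r}^2\,d\theta
=\sum_{j\le G}\sum_{\chi\notin\Eset}\int_I|C_\chi(y_j,\theta)|^2\,d\theta .
\]
The term with $y_j<U^{3/4}$ is trivially small.

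For fixed $j$, I would apply Lemma~\ref{lem:Gallagher} in the Hilbert space $\ell^2(\chi\notin\Eset)$ with $a_n=(f(n)\Gamma_\chi(n;r)\ind_{n\le y_j})_\chi$, $\delta=T/U$ and $h_0=U/(2T)\ge U\ell^{-8}/2$. This bounds the integral by
\[
h_0^{-2}\int\sum_{\chi\notin\Eset}\bigg|\sum_{t<n\le t+h_0,\ n\le y_j}f(n)\Gamma_\chi(n;r)\bigg|^2dt .
\]
Each inner sum is expanded as in \eqref{eq:smooth-expansion}, over an interval, as $\sum_{s\in\Sr}f(s)\Gamma_\chi(s;r)\sum_{v/s<m\le v'/s}f(m)\overline{\chi(m)}$. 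For $s\le U^{1/8}$ and $t\ge U^{3/4}$, Corollary~\ref{cor:short-dilated} bounds the $m$-sum by $\ll (h_0/s)\epsU+(U/s)\ell^{-16}$. Because $h_0\gg U\ell^{-8}$, the second term is $\le (h_0/s)\ell^{-8}$. Summing over $s$ with weight $\max_\chi|\Gamma_\chi(s;r)|$ gives the factor $W_r\le H_rV_r/\sqrt r$ from \eqref{eq:W}. The terms with $s>U^{1/8}$ and with $t<U^{3/4}$ are negligible by \eqref{eq:smooth-tail}.

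The main obstacle is that this estimate holds for each $\chi$ separately, not in mean square. Summing over the at most $r$ characters costs a factor $r$, which gives
\[
\sum_{\chi\notin\Eset}|\cdots|^2\ll r\,h_0^2\,\frac{H_r^2V_r^2}{r}\,\epsU^2 .
\]
The $t$-integral over a range of length $\le U$ then gives $\int_I\|R_{y_j}\|^2\ll U H_r^2V_r^2\ell^{-31/16}$. Summing over $G\ll H_rT$ grid points, the grid term is $\ll UTH_r^3V_r^2\ell^{-31/16}$, whereas the target is $U/r$. This requires $rT\ll\ell^{31/16-\nu}$. For $T$ up to $\ell^8$ that fails, so the plan must be refined. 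I expect the fix is to avoid losing the full factor $T$ from the grid. One option is to apply Gallagher's lemma to the differences $R_{y_j}$ jointly, that is, to a vector-valued sequence indexed by $(\chi,j)$. Another is to choose the grid spacing comparable to $h_0$, so that each Gallagher window meets only $O(1)$ grid cells and the sum over $j$ telescopes into one integral over $t$.

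The genuinely available saving is $\epsU^2r=\ell^{-31/16}r$. Presumably this is exactly where the hypothesis $r\le\ell^{15/8}$ enters: it gives $r\epsU^2\ll\ell^{-1/16}$, with just enough room for $H_r^3V_r^2\ll r^{\nu}$ to be absorbed. Then the $T$-dependence must cancel against $h_0^{-2}$ and the length $|I|$, rather than accumulate over the grid. Making this bookkeeping close, so that the final bound is $O(U/r)$ uniformly for $T\le\ell^8$, is the step I expect to be hardest.
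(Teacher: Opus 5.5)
The proposal has a genuine gap: it does not control the maximum over the grid points.

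Your setup matches the paper's. Your window bound $\sum_{\chi\notin\Eset}|\cdots|^2\ll h_0^2\epsU^2H_r^2V_r^2$ for windows of length $h_0$ in $(U^{3/4},U]$ is the paper's \eqref{eq:window-vector}. Gallagher's lemma then gives \eqref{eq:interval-energy} for every interval $(v,v']$ in that range. The failure is where you locate it: bounding $\max_j|R_{y_j}|$ by the sum over the $G\asymp H_rT$ grid points loses a factor $G$. Neither suggested fix removes it.

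\begin{itemize}
\item Joint Gallagher on the increments does give $\sum_j\int_I\|R_{y_{j+1}}-R_{y_j}\|_{2,r}^2\,d\theta\ll(U+Gh_0)\epsU^2H_r^2V_r^2$. But recovering $\max_j|R_{y_j}|$ from the increments by Cauchy--Schwarz costs the factor $G$ again.
\item Enlarging the spacing is not available. The cell term of Lemma~\ref{lem:coarse-grid} forces $\Delta\ll U/(H_rT)\asymp h_0/H_r$, so the spacing is already at most comparable to $h_0$.
\item Nothing telescopes. Each $R_{y_j}$ is a partial sum of length $y_j$, so its Gallagher integral runs over a $t$-range of length about $y_j$, not $h_0$.
\end{itemize}

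The missing idea is a Rademacher--Menshov type dyadic chaining of the grid maximum.

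\begin{itemize}
\item Take $W$ the least power of $2$ with $W\ge U$, $G$ a power of $2$ with $H_rT\le G<2H_rT$, and $y_j=jW/G$.
\item Then each $(0,y_j]$ is a disjoint union of at most one interval from each level $\mathcal D_k=\{((i-1)W/2^k,iW/2^k]:1\le i\le 2^k\}$, $0\le k\le\log_2G$. Hence
\[
 \max_{j}|R_{y_j}(b,\theta)|\le\sum_{k=0}^{\log_2G}\bigg(\sum_{D\in\mathcal D_k}\bigg|\sum_{n\in D}a_n(b)\e(n\theta)\bigg|^2\bigg)^{1/2}.
\]
\item Apply Minkowski's inequality in $\|\cdot\|_{I,r}$, then \eqref{eq:interval-energy} on each block. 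Using $Gh_0<H_rU$, level $k$ contributes $\ll(U+2^kh_0)\epsU^2H_r^2V_r^2\ll U\epsU^2H_r^3V_r^2$. Your joint Gallagher over the increments is exactly this per-level step.
\item There are only $1+\log_2G\ll\log\ell$ levels. So the loss is $(\log\ell)^2$ instead of $G$, and the grid term is $\ll U\epsU^2H_r^3V_r^2(\log\ell)^2$.
\item At this point $r\le\ell^{15/8}$ closes the argument, as you guessed: $rH_r^3V_r^2\epsU^2(\log\ell)^2\ll\ell^{-49/1024}(\log\ell)^2$. The dependence on $T$ enters only through $\log G$.
\end{itemize}

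A smaller point concerns the coefficients with $n\le U^{3/4}$. \eqref{eq:smooth-tail} does not address them. They are most simply removed at the outset by \eqref{eq:CH}, as the paper does, so that every block lies in $(U^{3/4},U]$ where Corollary~\ref{cor:short-dilated} applies.
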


\begin{proof}
By \eqref{eq:energy-A-transfer}, we may assume $f\in\mathcal B_1$. All
constants and all lower bounds for $U$ in this proof are then absolute.
Put $I=[-T/U,T/U]$ and $h_0=U/(2T)$, and let $\Eset$ be the set of
Corollary~\ref{cor:short-dilated}.

We first remove the terms with $n\le U^{3/4}$. Let $c_n=f(n)$ for
$U^{3/4}<n\le U$ and $c_n=0$ otherwise. Then
$M_f(U;\alpha)\le M_{f-c}(U;\alpha)+M_c(U;\alpha)$ for every real
$\alpha$, and the coefficients $f(n)-c_n$ with $n\le U$ are supported
on $n\le U^{3/4}$. Since $|I|<1$, the estimate \eqref{eq:CH} gives
$\int_IM_{f-c}(U;b/r+\theta)^2\,d\theta\ll U^{3/4}$ for every $b$, and
this is $\ll U/r$. In the rest of the proof $F_y$ is formed with the
coefficients $c_n$, and $R_y=(1-P_\Eset)F_y$. It remains to show that
\begin{equation}\label{eq:large-goal}
 \big\|\max_{y\le U}|F_y|\big\|_{I,r}^2\ll\frac Ur,
\end{equation}
in the notation of Lemma~\ref{lem:coarse-grid}.

Let $a_n=(1-P_\Eset)\big[c_n\e(nb/r)\big]$, a function of the reduced
residues $b$ modulo $r$, so that
$R_y(\cdot,\theta)=\sum_{n\le y}a_n\e(n\theta)$. We claim that
\begin{equation}\label{eq:window-vector}
 \bigg\|\sum_{v<n\le v'}a_n\bigg\|_{2,r}\ll h_0\epsU H_rV_r
 \qquad(U^{3/4}\le v\le v'\le U,\ v'-v\le h_0).
\end{equation}
By Parseval's identity for \eqref{eq:char-expansion} and by
\eqref{eq:smooth-expansion}, the square of the left side is
\[
 \sum_{\chi\notin\Eset}\bigg|\sum_{s\in\Sr}f(s)\Gamma_\chi(s;r)
 \sum_{v/s<m\le v'/s}f(m)\overline{\chi(m)}\bigg|^2.
\]
For $s\le U^{1/8}$ the inner sum is
$\ll(h_0\epsU+U\ell^{-16})/s$ by \eqref{eq:short-dilated}. Since there
are at most $r$ characters, \eqref{eq:W} shows that the terms with
$s\le U^{1/8}$ contribute to the norm at most
\[
 C\sqrt r\,W_r\big(h_0\epsU+U\ell^{-16}\big)
 \le CH_rV_r\big(h_0\epsU+U\ell^{-16}\big).
\]
For $s>U^{1/8}$ the inner sum has at most $h_0/s+1$ terms and
$|\Gamma_\chi(s;r)|\le1$, so by \eqref{eq:smooth-tail} these $s$
contribute at most $r^{5/2}(h_0U^{-1/16}+U^{1/2})$. Now $T\le\ell^8$ and
$r\le\ell^{15/8}$ give $h_0\epsU\ge U\ell^{-9}/2$ and
$r^{5/2}\le\ell^5$. Hence $U\ell^{-16}$, $r^{5/2}h_0U^{-1/16}$ and
$r^{5/2}U^{1/2}$ are all $\ll h_0\epsU$. As $H_rV_r\ge1$, this proves
\eqref{eq:window-vector}.

Next let $U^{3/4}\le v\le v'\le U$, with no condition on $v'-v$, and
apply Lemma~\ref{lem:Gallagher} with $\delta=T/U$ to the functions
$a_n$ with $v<n\le v'$. For every real $t$, the integers $n$ with
$t<n\le t+h_0$ and $v<n\le v'$ are those of an interval to which
\eqref{eq:window-vector} applies, and there are none unless
$v-h_0<t<v'$. Therefore
\begin{equation}\label{eq:interval-energy}
 \int_I\bigg\|\sum_{v<n\le v'}a_n\e(n\theta)\bigg\|_{2,r}^2\,d\theta
 \ll(v'-v+h_0)\epsU^2H_r^2V_r^2.
\end{equation}

Let $W$ be the least power of $2$ with $W\ge U$, and let $G$ be the
power of $2$ with $H_rT\le G<2H_rT$. For sufficiently large $U$ we have
$G\le W$, so that $\Delta=W/G$ is an integer; put $y_j=j\Delta$ for
$0\le j\le G$. For $0\le k\le\log_2G$, let $\mathcal D_k$ be the set of
the $2^k$ intervals $((i-1)W/2^k,iW/2^k]$ with $1\le i\le2^k$. By the
binary expansion of $j/G$, each interval $(0,y_j]$ with $1\le j\le G$
is a disjoint union of intervals taken from the sets $\mathcal D_k$, at
most one from each. Hence, for all $b$ and $\theta$,
\[
 \max_{0\le j\le G}|R_{y_j}(b,\theta)|
 \le\sum_{k=0}^{\log_2G}\bigg(\sum_{D\in\mathcal D_k}
 \bigg|\sum_{n\in D}a_n(b)\e(n\theta)\bigg|^2\bigg)^{1/2},
\]
and Minkowski's inequality for the norm $\|\cdot\|_{I,r}$ gives
\begin{equation}\label{eq:dyadic-grid-maximal}
 \big\|\max_{0\le j\le G}|R_{y_j}|\big\|_{I,r}
 \le\sum_{k=0}^{\log_2G}\bigg(\sum_{D\in\mathcal D_k}\int_I
 \bigg\|\sum_{n\in D}a_n\e(n\theta)\bigg\|_{2,r}^2\,d\theta\bigg)^{1/2}.
\end{equation}
Since $a_n=0$ unless $U^{3/4}<n\le U$, the estimate
\eqref{eq:interval-energy} applies to the intersection of each $D$ with
$(U^{3/4},U]$. For fixed $k$ these intersections have total length at
most $U$, and there are at most $G$ of them. As $Gh_0<H_rU$, the sum
over $D\in\mathcal D_k$ in \eqref{eq:dyadic-grid-maximal} is
\[
 \ll(U+Gh_0)\epsU^2H_r^2V_r^2\ll U\epsU^2H_r^3V_r^2.
\]
The number of $k$ is $1+\log_2G\ll\log\ell$. Hence
\begin{equation}\label{eq:large-grid}
 \big\|\max_{0\le j\le G}|R_{y_j}|\big\|_{I,r}^2
 \ll U\epsU^2H_r^3V_r^2(\log\ell)^2\ll\frac Ur.
\end{equation}
For the last step, \eqref{eq:subpower} with $\nu=1/128$ and
$r\le\ell^{15/8}$ give
\[
 rH_r^3V_r^2\epsU^2(\log\ell)^2
 \ll r^{129/128}\ell^{-31/16}(\log\ell)^2
 \le\ell^{-49/1024}(\log\ell)^2.
\]

Since $|I|=2T/U\le1/r$ and $c_n=0$ for $n>U$, the estimate
\eqref{eq:finite-rank-energy}, with $J\le4095$, gives
$\|\max_{y\le W}|P_\Eset F_y|\|_{I,r}^2\ll U/r$. We now apply
Lemma~\ref{lem:coarse-grid} to the coefficients $c_n$, with $P=P_\Eset$
and the points $y_j$. The first two terms on the right of
\eqref{eq:coarse-grid-transfer} have just been bounded. Since
$\sum_n|c_n|^2\le U$ and $\Delta<2U/(H_rT)$, the third term is
\[
 \ll\frac{T(\Delta+r)}{\varphi(r)}\ll\frac Ur+TH_r\ll\frac Ur.
\]
This proves \eqref{eq:large-goal}, and with it \eqref{eq:energy-large}
for $f\in\mathcal B_1$.
\end{proof}

\begin{remark}\label{rem:mechanism}
Proposition~\ref{prop:sampling} and Lemmas~\ref{lem:Gallagher}
and~\ref{lem:coarse-grid} hold for arbitrary coefficients. In the
proofs of Propositions~\ref{prop:energy-small}
and~\ref{prop:energy-large}, multiplicativity is used through
Lemmas~\ref{lem:companion} and~\ref{lem:linear-A}, the character sum
estimates
\eqref{eq:GHS-long} and \eqref{eq:short-character}, and the expansion
\eqref{eq:smooth-expansion}.
\end{remark}

\section{Proofs of the main results}\label{sec:proofs}

\subsection{Proof of Theorem~\ref{thm:aux}}\label{sec:prime-blocks}

Recall the sums $T(P)$ from \eqref{eq:TP-intro}. The first step is the
bound for $T(P)$ that results from the estimates of
Section~\ref{sec:energy}.

\begin{lemma}\label{lem:new-block}
Assume the hypotheses of Proposition~\ref{prop:old-blocks}(ii), with
$f\in\BA$, and let $P$ be dyadic with $Y\le P<N$ and
$U=\lfloor N/P\rfloor$. If the hypotheses of
Proposition~\ref{prop:energy-small} or of
Proposition~\ref{prop:energy-large} hold with $T=4B$, then
\begin{equation}\label{eq:new-block}
 |T(P)|\ll_A\frac N{\sqrt{rB}}.
\end{equation}
\end{lemma}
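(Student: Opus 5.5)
We bound $T(P)$ by the Cauchy--Schwarz inequality, the transfer inequality \eqref{eq:positive-transfer}, and then the new energy bound \eqref{eq:energy-goal}. Since $|f(p)|\le A$ and $S_f(N/p,p\alpha)$ has modulus at most $M_f(U;p\alpha)$ (note $N/p<U+1$ and $\lfloor N/p\rfloor\le U$), we get
\[
 |T(P)|\le A\Big(\sum_{P<p\le2P}\log p\Big)^{1/2}
 \Big(\sum_{P<p\le2P}M_f(U;\alpha p)^2\log p\Big)^{1/2}
 \ll A\sqrt P\Big(\sum_{P<p\le2P}M_f(U;\alpha p)^2\log p\Big)^{1/2}
\]
by Chebyshev's bound. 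We then apply \eqref{eq:positive-transfer} with $c_n=f(n)$ and $d_n=nf(n)$, taking $h=P/\lceil B\rceil$.

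The first term on the right of \eqref{eq:positive-transfer} is treated by substituting $\theta=\beta t$. As $t$ runs over $[P,2P]$, $\theta$ runs over an interval of length $P|\beta|$ contained in $[-2P|\beta|,2P|\beta|]$, with $dt=d\theta/|\beta|$. When $B=1$ (or $\beta=0$) the integrand does not depend on $t$ beyond a bounded range, and we instead use a direct bound: $2P|\beta|\le 2P/N\le 2/U$ and the $t$-integral has length $P$. In general, since $ab$ runs over the reduced residues as $b$ does, and $2P|\beta|\le 2PB/N\le 4B/U=T/U$ for $U\ge N/(2P)$, the first term is
\[
 \ll\frac{1}{\varphi(r)}\sum_{b}^{*}\min\Big(P,\frac1{|\beta|}\Big)\cdot\frac{\max(1,\cdot)}{\cdots}
\]
More precisely, we aim for the inequality
\[
 \frac1{\varphi(r)}\sum_b^{*}\int_P^{2P}M_f(U;ab/r+\beta t)^2dt\ll \frac{P}{B}\cdot\frac{U}{T}\,\mathcal E_f(U,r,T)\cdot\frac{1}{\,\cdot\,}.
\]
In the case $N|\beta|\ge1$ this is just $|\beta|^{-1}\mathcal E_f(U,r,T)$, and $|\beta|^{-1}=N/B\asymp PU/B$. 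In the case $B=1$ we average over a longer window: we bound $M^2$ at each $t$ by the mean of $M^2$ over $\theta$ in a window of length $\asymp1/U$ around $\beta t$. This is the delicate point, since $M^2$ is not subharmonic in $\theta$. Instead, we simply use $\beta$ replaced by the enlarged interval $[-T/U,T/U]$ after first rescaling $t$: we write the $t$-integral as $P$ times an average and compare it with the $\theta$-integral over an interval of length $\gg 1/U$, using the fact that $h|\beta|U\le1$ makes the phases nearly constant. Proposition~\ref{prop:energy-small} or~\ref{prop:energy-large} then gives $\mathcal E_f\ll A^2U/r$, so the first term is $\ll A^2PU^2/(rB)\cdot U^{-1}\asymp A^2 NU/(rB)$.

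The second term is handled in the same way with $d_n=nf(n)$. By partial summation, $M_d(U;\cdot)\le 2U\,M_f(U;\cdot)$, since $\sum_{n\le k}nf(n)\e(n\phi)=k\sum_{n\le k}-\int_0^k\sum_{n\le v}\,dv$. Hence $h^2|\beta|^2M_d^2\le 4(h|\beta|U)^2M_f^2\le4M_f^2$, and the second term is dominated by the first. Combining,
\[
 |T(P)|\ll A\sqrt P\cdot A\sqrt{NU/(rB)}\asymp A^2 N/\sqrt{rB},
\]
using $PU\asymp N$; this is \eqref{eq:new-block}.

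The main obstacle is the change of variables in the case $B=1$, where $P|\beta|$ may be far smaller than $1/U$. There I would use the fact that $M_f(U;\phi+\theta)^2$ for $|\theta|\le 1/U$ is comparable on average: I would bound $M_f(U;ab/r+\beta t)\le M_f(U;ab/r+\theta)+O(U|\theta-\beta t|)\cdot\max$, via partial summation as above, and average over $\theta\in[-T/U,T/U]$. This costs only the $M_d$-type term, which is bounded as before.
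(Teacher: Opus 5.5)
Your argument follows the paper's proof: Cauchy--Schwarz over $p$; the transfer inequality \eqref{eq:positive-transfer} together with $M_d\le2UM_f$ and $h|\beta|U\le1$ to absorb the second term; for $B>1$, the substitution $\theta=\beta t$, giving $Q\ll|\beta|^{-1}\EE(U,r,4B)\ll_A NU/(rB)$; and for $B=1$, a local bound of the supremum by an integral on $I=[-4/U,4/U]$. The paper quotes that last bound as \cite[Lemma~4.4]{Robles}, and your final paragraph rederives it. Written cleanly, it reads $M_f(U;\xi+\beta t)\le M_f(U;\xi+\theta)+4\pi U\int_I M_f(U;\xi+s)\,ds$; averaging over $\theta\in I$ and applying Cauchy--Schwarz gives $M_f(U;\xi+\beta t)^2\ll U\int_I M_f(U;\xi+s)^2\,ds$, hence $Q\ll PU\,\EE(U,r,4)\ll_A NU/r$. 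Keep the derivative term as this integral, not as a supremum of $M_f$ over the window. Your garbled intermediate displays and the arithmetic slip $A^2PU^2/(rB)\cdot U^{-1}$ should simply be replaced by these bounds; nothing else is missing.
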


\begin{proof}
Let $Q$ be the left side of \eqref{eq:positive-transfer} with
$c_n=f(n)$, so that $d_n=nf(n)$. For every integer $k\le U$ and every
real $\xi$, partial summation gives
\[
 \sum_{n\le k}nf(n)\e(n\xi)=k\sum_{n\le k}f(n)\e(n\xi)
 -\sum_{m<k}\sum_{n\le m}f(n)\e(n\xi),
\]
and therefore
\begin{equation}\label{eq:weighted-max}
 M_d(U;\xi)\le2UM_f(U;\xi).
\end{equation}
Since $h|\beta|U\le1$, it follows from \eqref{eq:positive-transfer}
that
\begin{equation}\label{eq:Q-transfer}
 Q\ll\frac1{\varphi(r)}\sum_{b\bmod r}^{*}\int_P^{2P}
 M_f(U;ab/r+\beta t)^2\,dt.
\end{equation}
We show that
\begin{equation}\label{eq:Q-bound}
 Q\ll_A\frac{NU}{rB}.
\end{equation}

Suppose first that $B>1$, so that $|\beta|=B/N$. For $P\le t\le2P$ we
have $|\beta t|\le2P|\beta|\le2B/U$, because $PU\le N$, and
multiplication by $a$ permutes the reduced residues modulo $r$. The
substitution $\theta=\beta t$ in \eqref{eq:Q-transfer} therefore gives
\[
 Q\ll\frac1{|\beta|}\EE(U,r,2B)\le\frac NB\EE(U,r,4B),
\]
and \eqref{eq:Q-bound} follows from \eqref{eq:energy-small} or
\eqref{eq:energy-large}.

Now suppose that $B=1$, so that $N|\beta|\le1$; here $\beta$ may be
$0$. Put $I=[-4/U,4/U]$. For an integer $k\le U$ and a reduced residue
$b$ modulo $r$, let
$F(\theta)=\sum_{n\le k}f(n)\e(n(b/r+\theta))$. By
\cite[Lemma~4.4]{Robles}, applied on the interval $I$ of length $8/U$,
\[
 \sup_{\theta\in I}|F(\theta)|^2\le\frac U4\int_I|F(\theta)|^2\,d\theta
 +\frac{16}U\int_I|F'(\theta)|^2\,d\theta.
\]
Here $|F(\theta)|\le M_f(U;b/r+\theta)$ and, by
\eqref{eq:weighted-max},
$|F'(\theta)|\le2\pi M_d(U;b/r+\theta)\le4\pi UM_f(U;b/r+\theta)$.
Taking the maximum over $k$ and the mean over $b$, we obtain, for every
$\theta_0\in I$,
\[
 \frac1{\varphi(r)}\sum_{b\bmod r}^{*}M_f(U;b/r+\theta_0)^2
 \ll U\,\EE(U,r,4)\ll_A\frac{U^2}r,
\]
by \eqref{eq:energy-small} or \eqref{eq:energy-large} with $T=4$. For
$P\le t\le2P$ we have $|\beta t|\le2P/N\le2/U$, so that $\beta t\in I$.
Using the last bound in \eqref{eq:Q-transfer}, again after replacing
$ab$ by $b$, we get $Q\ll_APU^2/r\le NU/r$, which is
\eqref{eq:Q-bound}.

Finally, for $P<p\le2P$ we have $N/p<N/P$, hence
$|S_f(N/p,p\alpha)|\le M_f(U;\alpha p)$. By the Cauchy-Schwarz
inequality, the bound $\sum_{P<p\le2P}\log p\ll P$ of
\cite[(2.2)]{Robles} and \eqref{eq:Q-bound},
\[
 |T(P)|^2\le\bigg(\sum_{P<p\le2P}|f(p)|^2\log p\bigg)Q
 \ll_AP\frac{NU}{rB}\le\frac{N^2}{rB}.
\]
This proves \eqref{eq:new-block}.
\end{proof}

\begin{proof}[Proof of Theorem~\ref{thm:aux}]
By \eqref{eq:linear-A} with $X=N$ and $c_n=\e(n\alpha)$, it suffices to
prove \eqref{eq:aux-main} for $f\in\mathcal B_1$, with an absolute
implied constant. Let $f\in\mathcal B_1$. For bounded $N$ the assertion
follows from $|S_f(N,\alpha)|\le N$, and for $r\ge L^3$ from
Proposition~\ref{prop:old-blocks}(i). We may therefore assume that $N$
is sufficiently large and that $r<L^3$, and we use
\eqref{eq:reduction}.

Let $U_0$ be an absolute constant for which
Propositions~\ref{prop:energy-small} and~\ref{prop:energy-large} both
hold, and put $w_0=\log U_0$. Define
\begin{equation}\label{eq:cutoffs}
 w=\begin{cases}
 \max\{w_0,r^{1/4},(4B)^4\}&\text{if }B\le r^{1/8},\\
 \max\{w_0,r^{8/15},(4B)^{1/8}\}&\text{if }B>r^{1/8},
 \end{cases}
 \qquad U_*=e^w.
\end{equation}
Let $P$ be dyadic with $Y\le P<N$, and $U=\lfloor N/P\rfloor$. If
$U\ge U_*$, then $U\ge U_0$ and $\log U\ge w$. In the first case of
\eqref{eq:cutoffs} this gives $r\le(\log U)^4$ and $4B\le(\log U)^{1/4}$, the hypotheses
of Proposition~\ref{prop:energy-small} with $T=4B$. In the second case
it gives $r\le(\log U)^{15/8}$ and $4B\le(\log U)^8$, the hypotheses of
Proposition~\ref{prop:energy-large}. Hence Lemma~\ref{lem:new-block}
gives $T(P)\ll N/\sqrt{rB}$ whenever $U\ge U_*$. We divide the sum in
\eqref{eq:reduction} into three parts.

The $P$ with $P\le N/r$ and $U\ge U_*$ number $O(L)$, and by the bound
just obtained their contribution is $O(NL/\sqrt{rB})$.

For the $P$ with $P\le N/r$ and $U<U_*$ we use \eqref{eq:old-block}.
For these $P$ we have $N/P<U_*+1$, so their number is $O(1+w)$. The
second terms of \eqref{eq:old-block} form a geometric sum:
\[
 \sum_{\substack{P\le N/r\\P\textup{ dyadic}}}
 \sqrt{\frac{rNP}{\varphi(r)}}\ll\frac N{\sqrt{\varphi(r)}}\le N.
\]
The first terms contribute
\begin{equation}\label{eq:short-first}
 \ll\frac{(1+w)N}{\sqrt{\varphi(r)B}}
 =\frac{(1+w)\sqrt{H_r}}{\sqrt{rB}}N\ll N.
\end{equation}
To prove the last step, suppose first that $B\le r^{1/8}$. Then
$1+w\ll r^{1/4}+B^4$ and
\[
 \frac{1+w}{\sqrt{rB}}\ll r^{-1/4}B^{-1/2}+r^{-1/2}B^{7/2}
 \le2r^{-1/16}.
\]
If $B>r^{1/8}$, then $1+w\ll r^{8/15}+B^{1/8}$ and
\[
 \frac{1+w}{\sqrt{rB}}\ll r^{1/30}B^{-1/2}+r^{-1/2}B^{-3/8}
 \le2r^{-7/240}.
\]
In both cases \eqref{eq:short-first} follows from \eqref{eq:subpower}.

For the $P$ with $P>N/r$ we have $U<r$, and we use
\eqref{eq:old-short}. For each $j\ge0$ there is at most one of these $P$
with $2^j\le N/P<2^{j+1}$, and then $2^j\le U<2^{j+1}$. These $P$
therefore contribute
\[
 \ll N\sum_{j\ge0}2^{-j/2}\sqrt{\log\log(3\cdot2^{j+1})}\ll N.
\]

Adding the three parts, we obtain from \eqref{eq:reduction} that
$L\,S_f(N,\alpha)\ll NL/\sqrt{rB}+N$, and \eqref{eq:aux-main} for
$f\in\mathcal B_1$ follows on dividing by $L$.
\end{proof}

\subsection{Proofs of Theorems~\ref{thm:global} and~\ref{thm:refined}}\label{sec:deductions}

Both theorems are deduced from Theorem~\ref{thm:aux} by comparing the
given fraction $a/q$ with a fraction supplied by Dirichlet's theorem,
which we use in the form of \cite[Lemma~2.2]{Robles}: for every real
$Q\ge1$ and every real $\alpha$ there are integers $a_1$ and $r$ with
\begin{equation}\label{eq:dirichlet}
 (a_1,r)=1,\qquad 1\le r\le Q,\qquad
 \bigg|\alpha-\frac{a_1}r\bigg|\le\frac1{rQ}.
\end{equation}
In both proofs we may assume that $N$ is so large that
$Q_0=N/L^3\ge1$, since for bounded $N$ the assertions follow from
$|S_f(N,\alpha)|\le AN$. We apply \eqref{eq:dirichlet} with $Q=Q_0$ and
write
\[
 \beta_1=\alpha-\frac{a_1}r,\qquad B_1=\max\{1,N|\beta_1|\}.
\]
Then \eqref{eq:aux-hyp} holds with $a_1$ and $\beta_1$ in place of $a$
and $\beta$, and Theorem~\ref{thm:aux} gives
\begin{equation}\label{eq:aux-applied}
 |S_f(N,\alpha)|\ll A\bigg(\frac NL+\frac N{\sqrt{rB_1}}\bigg).
\end{equation}

\begin{proof}[Proof of Theorem~\ref{thm:global}]
Let $(a,q)=1$ and $|\alpha-a/q|\le q^{-2}$. If $r\ge q/2$, then
$rB_1\ge q/2$, and the right side of \eqref{eq:aux-applied} is
$\ll A(N/L+N/\sqrt q)$. If $r<q/2$, the reduced fractions $a_1/r$ and
$a/q$ are distinct, so that $|a_1/r-a/q|\ge1/(rq)$ and
\[
 |\beta_1|\ge\frac1{rq}-\frac1{q^2}\ge\frac1{2rq}.
\]
Hence $rB_1\ge N/(2q)$, and the right side of \eqref{eq:aux-applied} is
$\ll A(N/L+\sqrt{Nq})$. This proves \eqref{eq:three-term}. If
$R\le q\le N/R$, then $N/\sqrt q$ and $\sqrt{Nq}$ are both at most
$N/\sqrt R$, which gives \eqref{eq:global}. If $q\le N/L^2$, then
$\sqrt{Nq}\le N/L$, which gives \eqref{eq:folklore}.
\end{proof}

\begin{proof}[Proof of Theorem~\ref{thm:refined}]
Here $\alpha=a/q+\beta$. If $a_1/r=a/q$, then $r=q$, $\beta_1=\beta$
and $rB_1=D$, and the right side of \eqref{eq:aux-applied} is
$A(N/L+N/\sqrt D)$. Otherwise, since $B\ge1$ and $B_1\ge1$,
\[
 \frac1{qr}\le|\beta|+|\beta_1|\le\frac{B+B_1}N\le\frac{2BB_1}N.
\]
Thus $D\,rB_1\ge N/2$, and the right side of \eqref{eq:aux-applied} is
$\ll A(N/L+\sqrt{ND})$. This proves \eqref{eq:displacement-global}.

If $D\le N/L^2$, then $\sqrt{ND}\le N/L$, and \eqref{eq:refined}
follows. Since $D=\max\{q,qN|\beta|\}$, the condition
\eqref{eq:wider-local} is equivalent to $q\le N/L^2$ and
$|\beta|\le1/(qL^2)$. If $q\le Q_2$ and $|\beta|\le1/(qQ_2)$, then
$qN|\beta|\le L^2$, so that $D\le\max\{q,L^2\}\le N/L^2$ as soon as
$N\ge L^4$; the remaining $N$ are bounded. Finally,
$\max\{1,x\}\le1+x\le2\max\{1,x\}$ for $x\ge0$.
\end{proof}

The arcs in Theorem~\ref{thm:refined} can be defined with any fixed
power of $L$; those of Theorem~\ref{thm:aux} correspond to $C=3$.

\begin{corollary}\label{cor:fixed-log}
Let $C\ge2$ be fixed. If $f\in\BA$, $(a,q)=1$, $q\le N/L^C$ and
$|\beta|\le L^C/(qN)$, then
\[
 |S_f(N,a/q+\beta)|\ll_CA\bigg(\frac NL
 +\frac N{\sqrt{q\max\{1,N|\beta|\}}}\bigg).
\]
\end{corollary}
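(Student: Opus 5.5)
The plan is to reduce Corollary~\ref{cor:fixed-log} to Theorem~\ref{thm:refined} in the main range, and to the estimate \eqref{eq:aux-applied} of Theorem~\ref{thm:aux} by a Dirichlet comparison otherwise. Write $B=\max\{1,N|\beta|\}$ and $D=qB$. By Lemma~\ref{lem:linear-A} we may take $A=1$. We may also take $N$ large in terms of $C$, since otherwise $|S_f|\le N\ll_C N/L$. The hypotheses give $q\le N/L^C$ and $N|\beta|\le L^C/q$, so $D\le\max\{q,L^C\}$.

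\emph{Case $D\le N/L^2$.} Here \eqref{eq:refined} of Theorem~\ref{thm:refined} applies directly and gives the claim with an absolute constant.

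\emph{Case $D>N/L^2$.} This forces either $q>N/L^2$, or $q\le N/L^2$ and $B>L^2/q$. In both subcases I will use \eqref{eq:displacement-global}, which gives
\[
|S_f|\ll N/L+N/\sqrt D+\sqrt{ND}.
\]
The term $N/\sqrt D$ is the desired one. The difficulty is $\sqrt{ND}$, which can be large when $D$ is near $N$. So instead I will argue through \eqref{eq:aux-applied} as in the proof of Theorem~\ref{thm:refined}, choosing the Dirichlet parameter adapted to $C$. Take $Q=N/L^{C'}$ with $C'=\max\{C,3\}+1$; for this I need a version of Theorem~\ref{thm:aux} on arcs $r\le N/L^{C'}$. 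That version follows since these arcs are contained in those of Theorem~\ref{thm:aux}, with $Q_0$ replaced by the smaller $N/L^{C'}$ and $|\beta_1|\le 1/(rQ)$. The containment is not exact, however, because $1/(rQ)>1/(rQ_0)$, so I would redo this step with $Q=Q_0$ and compare as follows.

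\emph{Comparison.} If $a_1/r=a/q$, then $rB_1=D$ and we are done. Otherwise
\[
1/(qr)\le|\beta|+|\beta_1|\le L^C/(qN)+1/(rQ_0).
\]
Multiplying by $qrN$ gives $N\le rL^C+qL^3$. Since $q\le N/L^C$ with $C\ge2$, and hence $qL^3\le N L^{3-C}$, this is informative when $C>3$: it gives $r\gg N/L^C$. Then $rB_1\ge r\gg N/L^C\ge q$. For the final bound I also need $rB_1\gg D$. Using $1/(qr)\le 2BB_1/N$ as in the paper, we get $rB_1\ge N/(2D)$. When $D\le\sqrt N$ this is at least $\sqrt N/2\ge D/2$; when $D>\sqrt N$, the inequality $D\le L^{C}\max\{1,N/L^C\}$ shows $N/\sqrt D\le N/L$ only if $D\ge L^2$, which holds, so that subcase is absorbed in $N/L$.

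The main obstacle is the range $2\le C\le3$ together with a large displacement, where neither comparison is automatic. For these $C$, I expect the monotonicity in $C$ to settle it: the hypotheses for $C$ imply those for $C=3$ up to the region $N/L^3<q\le N/L^C$. In that region $N/\sqrt q$ and $\sqrt{Nq}$ must be compared with $N/L$ via \eqref{eq:three-term}, which gives $|S_f|\ll N/L+N/\sqrt q+\sqrt{Nq}$. This is the step where the dependence of the implied constant on $C$ enters, and I would check it most carefully.
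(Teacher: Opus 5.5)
Your first case is the whole proof, and you already have what is needed to see that the second case never occurs, but you do not combine it. You note $D\le\max\{q,L^C\}$ and that $N$ may be taken large in terms of $C$. Since $C\ge2$, the hypothesis gives $q\le N/L^C\le N/L^2$. Also $L^C\le N/L^2$ as soon as $N\ge L^{C+2}$. Hence $D\le N/L^2$ always, and \eqref{eq:refined} gives the bound, with the linear dependence on $A$ already included, so the reduction to $A=1$ is unnecessary. For $N$ below a threshold depending on $C$, use $|S_f|\le AN\ll_C AN/L$; this is the only place where the constant depends on $C$. That is exactly the paper's argument.

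As written, however, your proposal does not prove the statement. It treats $D>N/L^2$ as a genuine case and ends it with an expectation (``I expect the monotonicity in $C$ to settle it'') rather than a proof. The analysis of that case is also partly wrong:
\begin{itemize}
\item The subcase $q>N/L^2$ contradicts the hypothesis $q\le N/L^C$ with $C\ge2$.
\item When $q\le N/L^2$, the condition $D>N/L^2$ reads $B>N/(qL^2)$, not $B>L^2/q$.
\item The region $N/L^3<q\le N/L^C$ that worries you for $2\le C\le3$ is already covered by Theorem~\ref{thm:refined}. Its hypothesis is $D\le N/L^2$, not $q\le Q_0$.
\item Saying a subcase is ``absorbed in $N/L$'' because $N/\sqrt D\le N/L$ does not bound the actual term $N/\sqrt{rB_1}$ of \eqref{eq:aux-applied}. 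That needs $rB_1\gg L^2$, which is what $rB_1\ge N/(2D)$ together with $D\le N/L^2$ supplies inside the proof of Theorem~\ref{thm:refined}.
\end{itemize}
No new idea is required to repair this: observe that the second case is empty once $N\ge L^{C+2}$, and delete it.
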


\begin{proof}
With $D$ as in Theorem~\ref{thm:refined}, we have
$D\le\max\{q,L^C\}$ and $q\le N/L^C\le N/L^2$. If $N$ is sufficiently
large in terms of $C$, then also $L^C\le N/L^2$, and
\eqref{eq:refined} applies. For the remaining $N$ the assertion follows
from $|S_f(N,a/q+\beta)|\le AN$.
\end{proof}

\subsection{Logarithmic weights}\label{sec:log-weights}

By partial summation, Theorem~\ref{thm:global} gives the following
bound.

\begin{corollary}\label{cor:log-weights}
Let $f\in\BA$, $(a,q)=1$, $1\le q\le N$ and $|\alpha-a/q|\le q^{-2}$.
Then
\begin{equation}\label{eq:log-weights}
 \bigg|\sum_{n\le N}\frac{f(n)}n\e(n\alpha)\bigg|
 \ll A\bigg(\log(2q)+\frac L{\sqrt q}\bigg).
\end{equation}
\end{corollary}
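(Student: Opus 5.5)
By \eqref{eq:linear-A} with $c_n=\e(n\alpha)/n$ and $X=N$, it suffices to treat $f\in\mathcal B_1$ with an absolute constant. Write $S(x)=S_f(x,\alpha)$. Partial summation gives
\[
 \sum_{n\le N}\frac{f(n)}n\e(n\alpha)=\frac{S(N)}N+\int_1^N\frac{S(x)}{x^2}\,dx .
\]
The boundary term is $O(1)$. I would split the integral at $x=q$ and at $x=x_1$, where $x_1$ is the least number at least $q$ with $x_1\ge qL_1^2$ and $L_1=\log x_1$; concretely $x_1\asymp q(\log 3q)^2$.

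For $1\le x\le q$ I use the trivial bound $|S(x)|\le x$. This range contributes at most $\log q$.

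For $x>q$ the hypotheses of Theorem~\ref{thm:global} hold with $N$ replaced by $x$: $(a,q)=1$, $1\le q\le x$ and $|\alpha-a/q|\le q^{-2}$. So \eqref{eq:three-term} gives
\[
 |S(x)|\ll \frac{x}{\log x}+\frac{x}{\sqrt q}+\sqrt{xq}.
\]
On $q<x\le x_1$ this is at most $O(x)$, and that range contributes $\ll\log(x_1/q)\ll\log\log(3q)\le\log(2q)$.

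For $x>x_1$ we have $q\le x/(\log x)^2$, because $x/(\log x)^2$ is increasing for $x\ge e^2$. Hence \eqref{eq:folklore} applies and gives $|S(x)|\ll x/\log x+x/\sqrt q$. Integrating $S(x)/x^2$ over $x_1<x\le N$ gives
\[
 \ll\int_{x_1}^N\frac{dx}{x\log x}+\frac1{\sqrt q}\int_{x_1}^N\frac{dx}x\ll \log\frac{\log N}{\log x_1}+\frac{L}{\sqrt q}.
\]

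The main obstacle is the first term, $\log(L/\log x_1)$. It is $\log\log N$-sized when $q$ is small, so it does not fit under $\log(2q)+L/\sqrt q$. I would handle it as follows.

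\emph{Large $q$.} If $\log(L/\log x_1)\le \log(2q)$, for instance when $\log x_1\ge L/(2q)$, we are done.

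\emph{Small $q$.} Otherwise $q$ is small compared with $\log L$ and $L/\sqrt q$ is huge. I would then bound the troublesome term crudely: $\int_{x_1}^N dx/(x\log x)\le\log L\ll L/\sqrt q$ whenever $\sqrt q\le L/\log L$. This holds in this case, since $\log(2q)<\log(L/\log x_1)\le\log L$ gives $q<L$. I expect the two cases to overlap, because $\log(L/\log x_1)>\log(2q)$ forces $q<L/\log x_1\le L$, hence $\sqrt q\le\sqrt L\le L/\log L$ for large $N$.

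So every range contributes $\ll\log(2q)+L/\sqrt q$, and the dependence on $A$ is restored by Lemma~\ref{lem:linear-A}.

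Bounded $N$, and the degenerate case $x_1>N$ (where the third range is empty), are handled by the same trivial estimates.
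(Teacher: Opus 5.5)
Your proof is correct and follows essentially the paper's route: partial summation, the trivial bound on an initial range, a bound $x/\log x+x/\sqrt q$ from Theorem~\ref{thm:global} on the tail, and the same dichotomy $q\ge L$ versus $q<L$ to absorb the $\log\log$-sized term. The differences are minor: the paper cuts at $q^2$ and uses \eqref{eq:global} with $R=q$, whereas you cut at $x_1\asymp q(\log 3q)^2$ and use \eqref{eq:folklore}; and as you defined it $x_1=q$ for $q\le2$, so for $q=1$ the integral of $1/(x\log x)$ from $x_1$ diverges and for $q=2$ the monotonicity of $x/(\log x)^2$ is not available, which is repaired by requiring $x_1\ge\max\{q,e^2\}$.
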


\begin{proof}
Since $|f(n)|\le A$, the sum of $|f(n)|/n$ over $n\le x$ is at most
$A(1+\log x)$ for every $x\ge1$. This gives \eqref{eq:log-weights} when
$q=1$, and also when $q^2\ge N$, since then $L\le2\log q$. Let
$2\le q<\sqrt N$. The terms with $n\le q^2$ contribute
$\ll A\log(2q)$. For $q^2\le t\le N$ we have $2\le q\le t/q$, and
\eqref{eq:global}, with $t$ in place of $N$ and with $R=q$, gives
\[
 |S_f(t,\alpha)|\ll A\bigg(\frac t{\log t}+\frac t{\sqrt q}\bigg).
\]
By partial summation,
\begin{align*}
 \bigg|\sum_{q^2<n\le N}\frac{f(n)}n\e(n\alpha)\bigg|
 &\le\frac{|S_f(N,\alpha)|}N+\frac{|S_f(q^2,\alpha)|}{q^2}
 +\int_{q^2}^N\frac{|S_f(t,\alpha)|}{t^2}\,dt\\
 &\ll A\bigg(1+\log(2L)+\frac L{\sqrt q}\bigg).
\end{align*}
If $q\ge L$, then $\log(2L)\le\log(2q)$. If $q<L$, then
$L/\sqrt q>\sqrt L\gg\log(2L)$. This proves \eqref{eq:log-weights}.
\end{proof}

For $A=1$, the estimate \eqref{eq:log-weights} is the bound conjectured
in \cite[(1.5)]{GL}. It is stated there for $q\le N$ and
$|\alpha-a/q|\le1/(qN)$, which implies $|\alpha-a/q|\le q^{-2}$. The
leading constant proposed in \cite[(1.6)]{GL} is a separate question;
see Section~\ref{sec:constant}.

\section{Sharpness}\label{sec:sharpness}

This section contains the examples behind the assertions made after
Theorem~\ref{thm:refined}. Except in Sections~\ref{sec:A-sharpness}
and~\ref{sec:constant}, the functions are completely multiplicative and
belong to $\mathcal B_1$. The basic examples are due to Montgomery and
Vaughan \cite[Section~7]{MV}; see also \cite[Section~1]{MS}.
Sections~\ref{sec:density} and~\ref{sec:constant} contain further
examples, which concern refinements proposed in \cite{Bachman} and in
\cite{GL}.

\subsection{The $N/\log N$ term}\label{sec:NlogN-sharpness}

For every real $\alpha$ and every large $N$ there is a completely
multiplicative $f\in\mathcal B_1$ with
\[
 |S_f(N,\alpha)|\ge\pi(N)-\pi(N/2)\gg\frac NL.
\]
The values $f(p)$ at the primes $N/2<p\le N$ are chosen so that the
terms $f(p)\e(p\alpha)$ all have the argument of the sum over the
remaining $n\le N$; see \cite[Section~7]{MV} and
\cite[Section~1, footnote~1]{dBG}. Taking $\alpha=1/q$ with
$q=\lfloor\sqrt N\rfloor$ and $R=q$ in \eqref{eq:global}, we see that
the term $N/L$ cannot be decreased even when $N/\sqrt R=o(N/L)$.

\subsection{The $q^{-1/2}$ and $R^{-1/2}$ terms}\label{sec:q-sharpness}

Let $\chi$ be a primitive character modulo $q$, let $(a,q)=1$, and let
$N$ be a multiple of $q$. Summing over the residue classes modulo $q$
and using \eqref{eq:induced-Gauss}, we obtain
\begin{equation}\label{eq:character-obstruction}
 S_\chi(N,a/q)=\frac Nq\,\overline{\chi(a)}\tau(\chi),
 \qquad|S_\chi(N,a/q)|=\frac N{\sqrt q}.
\end{equation}
Now let $q\ge5$ be prime and let $N$ be a multiple of $q$ with
$L\ge q$. Then $R=q$ satisfies $R\le q\le N/R$, and moreover
$q\le N/L^2$ and $N/L\le N/q$. Hence, as $q\to\infty$, neither
$N/\sqrt R$ in \eqref{eq:global} nor $N/\sqrt q$ in
\eqref{eq:folklore} can be replaced by a function of smaller order.

\subsection{Dependence on the coefficient bound}\label{sec:A-sharpness}

For $A\ge2$, let $f_A(n)=1$ for odd $n$ and $f_A(n)=A$ for even $n$.
Of two coprime integers at most one is even, so $f_A$ is multiplicative,
and $f_A\in\BA$. For even $N\ge4$,
\begin{equation}\label{eq:A-sharpness}
 S_{f_A}(N,1/2)=\frac{(A-1)N}2\ge\frac{AN}4.
\end{equation}
Since $q=R=2$ is admissible in Theorem~\ref{thm:global}, the factor $A$
in \eqref{eq:global} cannot be replaced by a function of smaller order.

The factor $A^2$ in Propositions~\ref{prop:energy-small}
and~\ref{prop:energy-large} is also necessary. Let $U$ be even and
$|\theta|\le1/(12U)$. Then $\cos(2\pi n\theta)\ge1/2$ for $n\le U$, and
therefore
\[
 \bigg|\sum_{n\le U}f_A(n)\e(n\theta)\bigg|
 \ge\frac12\sum_{n\le U}f_A(n)\ge\frac{AU}4.
\]
The left side is at most $M_{f_A}(U;\theta)$. Integrating its square
over $|\theta|\le1/(12U)$ gives, for $r=T=1$,
\begin{equation}\label{eq:A-energy-sharpness}
 \mathcal E_{f_A}(U,1,1)\ge\frac{A^2U}{96}.
\end{equation}

\subsection{The logarithmic terms}\label{sec:log-sharpness}

Both terms of \eqref{eq:log-weights} are necessary, already for
completely multiplicative functions in $\mathcal B_1$; compare
\cite[Sections~1.7 and~1.8]{GL}. For the first term, take $f=1$ and
$\alpha=1/q$ with $q\ge2$. Since $|\e(n/q)-1|\le2\pi n/q$,
\[
 \sum_{n\le q}\frac{\e(n/q)}n=\sum_{n\le q}\frac1n+O(1)=\log q+O(1).
\]
The sums $\sum_{n\le t}\e(n/q)$ are $O(q)$, so by partial summation the
terms with $q<n\le N$ contribute $O(1)$, uniformly for $N\ge q$. Hence
\begin{equation}\label{eq:logq-sharpness}
 \sum_{n\le N}\frac{\e(n/q)}n=\log q+O(1)\qquad(N\ge q\ge2),
\end{equation}
with an absolute implied constant. For $N=q^2$ we have
$L/\sqrt q=o(\log q)$ as $q\to\infty$.

For the second term, let $\chi$ be a primitive character modulo $q$ and
$(a,q)=1$. The sequence $\chi(n)\e(an/q)$ has period $q$, and by
\eqref{eq:character-obstruction} its mean over a period is
$\mu=\overline{\chi(a)}\tau(\chi)/q$, with $|\mu|=q^{-1/2}$. Thus
$S_\chi(t,a/q)=\mu t+O(q)$ uniformly for real $t\ge0$, and partial
summation on $(q,N]$ gives
\[
 \sum_{q<n\le N}\frac{\chi(n)}n\e(an/q)=\mu\log\frac Nq+O(1)
 \qquad(N\ge q).
\]
The terms with $n\le q$ contribute at most $1+\log q$. It follows that
\begin{equation}\label{eq:logN-sharpness}
 \sum_{n\le N}\frac{\chi(n)}n\e(an/q)=\mu L+O(\log(2q)).
\end{equation}
If $q$ is prime and $L\ge q$, the error term is $o(L/\sqrt q)$ as
$q\to\infty$, so the left side has modulus $\sim L/\sqrt q$.

\subsection{An archimedean example}\label{sec:archimedean}

The next two examples use the functions $n^{-i\pi B}$, as in
\cite[Proposition~8.1]{Robles}.

\begin{proposition}\label{prop:stationary}
Let $N\ge3$ be an integer, $B\ge2$ and $f_B(n)=n^{-i\pi B}$. There is
a real number $\vartheta=\vartheta(N,B)$ such that
\begin{equation}\label{eq:stationary}
 S_{f_B}(N,B/N)=\frac N{\sqrt{2B}}\,e^{i\vartheta}
 +O\Big(\frac NB+BL+1\Big).
\end{equation}
In particular, if $B\to\infty$ and $B^{3/2}L=o(N)$, then
$|S_{f_B}(N,B/N)|\sim N/\sqrt{2B}$.
\end{proposition}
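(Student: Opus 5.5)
The sum $S_{f_B}(N,B/N)=\sum_{n\le N}\e(\phi(n))$ has phase
\[
 \phi(t)=\frac{Bt}N-\frac B2\log t,
\]
since $n^{-i\pi B}=\e(-(B/2)\log n)$. The plan is to evaluate it by the van der Corput B-process, that is, Poisson summation followed by stationary phase. The derivatives are
\[
 \phi'(t)=\frac BN-\frac B{2t},\qquad \phi''(t)=\frac B{2t^2}>0 .
\]
On $[1,N]$ the derivative $\phi'$ increases from $B/N-B/2$ to $B/(2N)$. For $B\ge2$ and $N$ large its range is contained in $(-B/2,1)$, so the integers $\nu$ with $\phi'(t)=\nu$ solvable in $[1,N]$ are $\nu=0,-1,\dots,-\lfloor B/2-B/N\rfloor$, roughly $B/2$ values.

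The first step applies the truncated Poisson formula (e.g.\ \cite[Lemma~4.10]{Titchmarsh}-type; I would cite the standard form: if $\phi'$ is monotone on $[a,b]$ with range inside $[\alpha_0-\eta,\beta_0+\eta]$, then $\sum_{a<n\le b}\e(\phi(n))=\sum_{\alpha_0-\eta<\nu<\beta_0+\eta}\int_a^b\e(\phi(t)-\nu t)\,dt+O(\log(\beta_0-\alpha_0+2))$). This gives
\[
 S=\sum_\nu\int_1^N\e(\phi(t)-\nu t)\,dt+O(\log(B+2)).
\]
Each integral is then treated by stationary phase. The stationary point of $\phi(t)-\nu t$ is $t_\nu=BN/(2(B-\nu N))$. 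For $\nu=0$ this gives $t_0=N/2$, where $\phi''(t_0)=2B/N^2$. The main term is therefore
\[
 \frac{\e(\phi(t_0)+1/8)}{\sqrt{\phi''(t_0)}}=\frac N{\sqrt{2B}}\,e^{i\vartheta},
\]
with the error from the standard second-derivative stationary phase lemma, $O(1/\lambda_3^{\,}\ldots)$. Here $\phi'''\asymp B/t^3$, so at $t\asymp N$ this error is $O(N/B)$ together with endpoint contributions $O(1/|\phi'(N)|)=O(N/B)$ and $O(1/|\phi'(1)-0|)=O(1/B)$.

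For $\nu\le-1$ the stationary point is $t_\nu=BN/(2(B+|\nu|N))\approx B/(2|\nu|)$, which is small. There $\phi''(t_\nu)\asymp B/t_\nu^2$, so each such term has size $\asymp t_\nu/\sqrt B\asymp\sqrt B/|\nu|$, plus stationary-phase errors of size $O(1)$ and endpoint terms of size $O(1/\mathrm{dist})$. I would not seek asymptotics for these; I would only bound them. Summing over $1\le|\nu|\le B/2$ gives
\[
 \sqrt B\log B+O(B)+O\Big(\sum_\nu 1/\|\cdot\|\Big).
\]
The endpoint terms at $t=1$ and $t=N$ are $\ll1/|\phi'(\text{end})-\nu|$. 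Summing them over $\nu$ gives $O(\log B)$, except for the $\nu$ closest to $\phi'(1)$, which I would handle by the first-derivative bound $\ll1/\sqrt{\phi''(1)}\ll B^{-1/2}$ or trivially $O(1)$. Since $\sqrt B\log B\ll B\le BL$, all of this is absorbed into $O(N/B+BL+1)$. The generous $BL$ term suggests the author simply bounds each of the $O(B)$ secondary integrals by $O(L)$; for instance, the integral over $[1,N]$ is $\ll\min(\dots)$ and crudely $\ll\log N$ by splitting dyadically and using the second-derivative test on each block. I would adopt that crude route to avoid delicate stationary-phase bookkeeping near $t=1$.

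The main obstacle is justifying the $\nu=0$ stationary phase with an explicit error $O(N/B)$, uniformly in $B$ up to $o(N^{2/3})$. I would do this by restricting to $t\in[N/4,N]$, where $\phi''\asymp B/N^2$ and $\phi'''\asymp B/N^3$, and applying the standard lemma with $\lambda_2=B/N^2$. That lemma gives main term plus $O(\lambda_2^{-1/2}\cdot(\text{stuff}))$; equivalently, I would rescale $t=Nu$ so the phase is $B\psi(u)$ with $\psi(u)=u-\tfrac12\log u$ and $\psi''(1/2)=2$. The integral is then $N\int\e(B\psi(u))\,du=N\big(e^{i\pi/4}\e(B\psi(1/2))/\sqrt{2B}+O(1/B)\big)$ by classical one-dimensional stationary phase with large parameter $B$. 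This yields exactly $N/\sqrt{2B}$ with error $O(N/B)$. The piece $[1,N/4]$ carries no stationary point for $\nu=0$ and is $O(N/B)$ by the first-derivative test. The final claim follows since $N/B+BL+1=o(N/\sqrt B)$ precisely when $B\to\infty$ and $B^{3/2}L=o(N)$.
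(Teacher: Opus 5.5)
Your argument is correct, but it follows a different route from the paper, and the $BL$ term does not arise the way you guess. The paper never passes to the dual side. It compares $S_{f_B}(N,B/N)$ directly with the single integral $\int_1^N\e(\phi(x))\,dx$, at a cost $O(1+BL)$. This is the size of the elementary sum-versus-integral bound $O\bigl(1+\int_1^N|\phi'(x)|\,dx\bigr)$, since $|\phi'(x)|\le B/N+B/(2x)$. It then evaluates the integral after $x=Nu$ by stationary phase at $u=1/2$, with error $O(N/B)$. That last step is exactly your treatment of $\nu=0$: your main term $\e(\phi(N/2)+1/8)\,N/\sqrt{2B}$ has the paper's phase $\vartheta=\pi B(1+\log 2)-\pi B\log N+\pi/4$. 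So $BL$ is the total variation of the phase, not $O(B)$ dual integrals of size $O(L)$ each.

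The paper's route takes two lines. Yours needs the truncated Poisson formula, together with the remark that for $B\ge N/L$ the claim is trivial; this makes $\phi'(N)=B/(2N)$ small, so no $\nu\ge1$ occurs. In exchange, your route proves more. For $\nu\le-1$ write $\phi'(t)-\nu=(|\nu|+B/N)(1-t_\nu/t)$. Off $[t_\nu/2,2t_\nu]$ this is monotone and has modulus at least $\frac12(|\nu|+B/N)$, while on that interval $\phi''\ge B/(8t_\nu^2)$. The first- and second-derivative tests then give $\int_1^N\e(\phi(t)-\nu t)\,dt\ll 1/|\nu|+t_\nu/\sqrt B\ll\sqrt B/|\nu|$. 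The dual terms therefore total $O(\sqrt B\log(B+2))$, and you obtain the error $O(N/B+\sqrt B\log(B+2)+1)$. This gives the asymptotic $|S_{f_B}(N,B/N)|\sim N/\sqrt{2B}$ as soon as $B\to\infty$ and $B\log B=o(N)$, a wider range than $B^{3/2}L=o(N)$.

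One step, taken literally, fails: the ``crude route'' you say you would adopt. The second-derivative test on a dyadic block $[X,2X]$ gives only $X/\sqrt B$. Summing over blocks gives $N/\sqrt B$ per integral, and $N\sqrt B$ over all $\nu$, which is far more than $BL$. Away from $t_\nu$ you must use the first-derivative test, as above. With that correction your argument is complete.
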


\begin{proof}
For $B=L$ this is \cite[(8.2)]{Robles}, with
$\vartheta=\pi B(1+\log2)-\pi B\log N+\pi/4$. The proof given there
applies to every $B\ge2$. It compares $S_{f_B}(N,B/N)$ with the
integral of $\exp(i(2\pi Bx/N-\pi B\log x))$ over $1\le x\le N$, with
an error $O(1+BL)$, and it evaluates this integral, after the
substitution $x=Nu$, by the method of stationary phase at $u=1/2$,
with an error $O(N/B)$. The value $B=L$ is used in \cite{Robles} only
to verify the hypotheses of \cite[Theorem~1.1]{Robles} and to compare
the error term with the main term.
\end{proof}

\subsection{The exponent of $B$}\label{sec:displacement-exponent}

Take $B=L$ and $\alpha=B/N$. For large $N$ the hypotheses
\eqref{eq:aux-hyp} hold with $a/r=0/1$ and $\beta=B/N$, and the
quantity $B$ of Theorem~\ref{thm:aux} is $L$. By
Proposition~\ref{prop:stationary},
$|S_{f_B}(N,\alpha)|\sim N/\sqrt{2L}$, whereas
$N/L+N/L^{1/2+\eps}=o(N/\sqrt L)$ for every fixed $\eps>0$. Hence the
factor $B^{-1/2}$ in \eqref{eq:aux-main} cannot be replaced by
$B^{-1/2-\eps}$. This is \cite[Proposition~8.1]{Robles}.

\subsection{The range of $q$ and the term $\sqrt{Nq}$}\label{sec:upper-scale}

Fix $0<\eta<2$, and put $q=\lfloor N/L^{2-\eta}\rfloor$, $\alpha=1/q$
and $B=N/q$, so that $\alpha=B/N$ and $B\sim L^{2-\eta}$. By
Proposition~\ref{prop:stationary},
\[
 |S_{f_B}(N,1/q)|\sim\frac N{\sqrt{2B}}=\sqrt{\frac{Nq}2}
 \sim\frac N{\sqrt2\,L^{1-\eta/2}},
\]
whereas $N/L+N/\sqrt q\asymp N/L$. Thus \eqref{eq:folklore} does not
hold for all $q\le N/L^{2-\eta}$, and for these $q$ the term
$\sqrt{Nq}$ in \eqref{eq:three-term} cannot be replaced by a function
of smaller order. More generally, if $\lambda=\lambda(N)\to\infty$ and
$\lambda=o(L^2)$, the choice $q=\lfloor N\lambda/L^2\rfloor$ gives
$|S_{f_B}(N,1/q)|\asymp\sqrt\lambda\,N/L$. Hence the range
$q\le N/L^2$ in \eqref{eq:folklore} cannot be enlarged by a factor
$\xi(N)$ tending to infinity: take $\lambda=\min\{\xi(N),L\}$. See also the construction with Farey fractions in
\cite[Section~7(iii)]{MV}.

\subsection{The restriction on $\beta$}\label{sec:width-obstruction}

Let $\chi$ be the primitive character modulo $3$, let $N\ge6$ be a
multiple of $3$, and take $\alpha=1/3$ with $a/q=1/2$, so that
$|\beta|=1/6\le q^{-2}$. By \eqref{eq:character-obstruction},
$|S_\chi(N,1/3)|=N/\sqrt3$. On the other hand
$q\max\{1,N|\beta|\}=N/3$, so that the right side of
\eqref{eq:refined} is $\ll N/L+\sqrt N$. Thus \eqref{eq:refined} fails
if \eqref{eq:wider-local} is replaced by $|\beta|\le q^{-2}$ and an
upper bound for $q$. Here $D=N/3$.

\subsection{The density of the coefficients}\label{sec:density}

Bachman \cite[(0.7)]{Bachman} conjectured that, for every fixed
$0<\lambda\le1$,
\begin{equation}\label{eq:density-conj}
 |S_f(N,\alpha)|\ll_\lambda\frac NL+\frac1{\sqrt R}\sum_{n\le N}|f(n)|
\end{equation}
under the hypotheses of \eqref{eq:global} with $R\ge3$, uniformly for
the functions $f\in\mathcal B_1$ such that
\begin{equation}\label{eq:density-condition}
 \sum_{p\le N}\frac{|f(p)|}p\ge\lambda\log\log N;
\end{equation}
see also \cite[Conjecture~1.1]{MS}. This does not hold as stated. Let
$q=\prod_{3\le p\le y}p$ and $H_q=q/\varphi(q)$, let $\chi$ be a
primitive character modulo $q$, for example the product of the Legendre
symbols modulo the primes dividing $q$, and let $N$ be a multiple of
$q$ with $L\ge q$. By \eqref{eq:character-obstruction},
$|S_\chi(N,1/q)|=N/\sqrt q$, while $\sum_{n\le N}|\chi(n)|=N/H_q$.
Since $\log H_q=-\sum_{p\mid q}\log(1-1/p)=\sum_{p\mid q}1/p+O(1)$,
Mertens' theorem \cite[Theorem~2.7(d)]{MV2} gives
\[
 \sum_{p\le N}\frac{|\chi(p)|}p
 =\sum_{p\le N}\frac1p-\sum_{p\mid q}\frac1p
 =\log L-\log H_q+O(1).
\]
Here $H_q\ll\log\log(3q)$ by \cite[(2.3)]{Robles} and $L\ge q$, so that
the right side is at least $\frac12\log L$ when $y$ is large; thus
$\chi$ satisfies \eqref{eq:density-condition} with $\lambda=1/2$. Take
$\alpha=1/q$ and $R=q$; then $3\le R\le q\le N/R$. The bound
\eqref{eq:density-conj} would give
\[
 \frac N{\sqrt q}\ll\frac NL+\frac N{H_q\sqrt q},
\]
which is false for large $y$, because $\sqrt q/L\le q^{-1/2}$ and
$H_q\to\infty$. In this example the sum exceeds the second term of
\eqref{eq:density-conj} by the factor $H_q$; the same factor
$q/\varphi(q)$ occurs in the lower bound (ii)$'$ of
\cite[Section~1]{Bachman}.

\subsection{The leading constants}\label{sec:constant}

For a reduced fraction $a/q$ with $|\alpha-a/q|\le1/(qN)$ and
$q\le L^2$, Granville and Lamzouri \cite[(1.3)]{GL} conjecture that
$|S_f(N,\alpha)|\le(1+o(1))N/\sqrt q$ for every multiplicative
$f\in\mathcal B_1$. For functions that are not completely
multiplicative, the constant $1$ has to be replaced by a constant not
less than $4/\sqrt6$. Let $g$ be the multiplicative function with
$g(2^k)=g(3^k)=-1$ and $g(p^k)=1$ for primes $p\ge5$ and $k\ge1$. It has
period $6$, and its values at $1,\dots,6$ are $1,-1,-1,-1,1,1$. Put
$\zeta=\e(1/6)$, so that $\zeta^3=-1$ and $\zeta^2=\zeta-1$. Then
\[
 \sum_{n=1}^6g(n)\e(n/6)
 =\zeta-\zeta^2-\zeta^3-\zeta^4+\zeta^5+\zeta^6
 =2(1+\zeta-\zeta^2)=4,
\]
and therefore, for every multiple $N$ of $6$,
\[
 S_g(N,1/6)=\frac{2N}3=\frac4{\sqrt6}\cdot\frac N{\sqrt6}.
\]
The modulus need not be fixed. Let $\ell\ge5$ be prime, let $\chi_\ell$
be a primitive character modulo $\ell$, and put $q=6\ell$. By the
Chinese remainder theorem, for $(a,q)=1$ the sum of
$g(n)\chi_\ell(n)\e(an/q)$ over a period is the product of
$\sum_{b\bmod6}g(b)\e(a'b/6)$ and
$\sum_{c\bmod\ell}\chi_\ell(c)\e(a''c/\ell)$, where
$a'\equiv\pm1\pmod6$ and $(a'',\ell)=1$. The first sum equals $4$
because $g$ is real, and the second has modulus $\sqrt\ell$ by
\eqref{eq:induced-Gauss}. Hence
$|S_{g\chi_\ell}(N,a/q)|=(4/\sqrt6)N/\sqrt q$ for every multiple $N$ of
$q$, in particular for those with $q\le L^2$. The function
$g\chi_\ell$ is the one of \cite[Example~1.7]{GL},
where it is used for the sum with logarithmic weights; the constant
$4/\sqrt6$ is that of \cite[(1.6)]{GL} and of
Lemma~\ref{lem:Gauss-mass}.

For the sum with logarithmic weights, \cite[(1.6)]{GL} proposes the
bound $(4/\sqrt6+o(1))L/\sqrt q$ for $f\in\mathcal B_1$ and a reduced
fraction $a/q$ with $|\alpha-a/q|\le1/(qN)$, when
$q\le(L/\log L)^2$. This cannot hold in the whole of that range. Take
$f=1$, $\alpha=1/q$ and $q=\lfloor(L/\log L)^2\rfloor$, and let
$N\to\infty$. Then $\sqrt q\sim L/\log L$ and $\log q\sim2\log L$, so
that \eqref{eq:logq-sharpness} gives
\[
 \bigg|\sum_{n\le N}\frac{\e(n/q)}n\bigg|=\log q+O(1)
 \sim2\,\frac L{\sqrt q},
\]
and $2>4/\sqrt6$.

\section{The Montgomery-Vaughan class}\label{sec:MV-counter}

In this section we prove Theorem~\ref{thm:sharpness-summary} and the
assertions made after it. We write $H_q=q/\varphi(q)$, as in
Section~\ref{sec:gauss}.

\begin{proof}[Proof of Theorem~\ref{thm:sharpness-summary}]
Let $y\ge3$, and put
\[
 q=\prod_{3\le p\le y}p,\qquad\delta=\frac1{\log(2H_q)}.
\]
Then $H_q\ge3/2$ and $0<\delta<1$. Let $\chi$ be a primitive character
modulo $q$, for example the product of the Legendre symbols modulo the
primes dividing $q$. For $z>y$, let $\PP$ be the set of primes $p$ with
$y<p\le z$, and put
\begin{equation}\label{eq:counter-f}
 f_q(n)=\chi(n)(1+\delta)^{\omega_{\PP}(n)/2},\qquad
 \Pi_\delta=\prod_{p\in\PP}\Big(1+\frac\delta p\Big),
\end{equation}
where $\omega_{\PP}(n)$ is the number of primes of $\PP$ dividing $n$.
The function $f_q$ is multiplicative, and for $k\ge1$ we have
$|f_q(p^k)|=0$ if $p\mid q$, $|f_q(p^k)|=\sqrt{1+\delta}$ if
$p\in\PP$, and $|f_q(p^k)|=1$ otherwise. Since $H_q\to\infty$ as
$y\to\infty$, we have $\sqrt{1+\delta}\le A$ when $y$ is large in terms
of $A$. The parameters $y$ and $z$ are chosen below.

We first bound the mean square. Let $u(n)=|f_q(n)|^2$, so that
$u(n)=(1+\delta)^{\omega_{\PP}(n)}$ if $(n,q)=1$ and $u(n)=0$
otherwise. For $p\in\PP$,
\[
 \sum_{k\ge0}\frac{u(p^k)}{p^{ks}}=\frac{1+\delta p^{-s}}{1-p^{-s}},
 \qquad\text{so that}\qquad
 \Lambda_u(p^k)=\big(1+(-1)^{k-1}\delta^k\big)\log p.
\]
Moreover $\Lambda_u(p^k)=0$ if $p\mid q$, and $\Lambda_u(p^k)=\log p$
for the remaining primes. Hence $|\Lambda_u(n)|\le2\Lambda(n)$, that
is, $u\in\mathcal C(2)$. By \eqref{eq:Shiu} there are absolute
constants $x_0$ and $C$ such that
\[
 \sum_{n\le x}u(n)\le C\frac x{\log x}
 \exp\bigg(\sum_{p\le x}\frac{u(p)}p\bigg)\qquad(x\ge x_0).
\]
Let $y\ge x_0$ and $x>y$. Since $u(p)=0$ for $p\mid q$,
$u(p)=1+\delta$ for $p\in\PP$, and $u(p)=1$ for the other primes,
\[
 \sum_{p\le x}\frac{u(p)}p\le\sum_{p\le x}\frac1p-\sum_{p\mid q}\frac1p
 +\delta\sum_{p\in\PP}\frac1p.
\]
Here $\sum_{p\mid q}1/p=\log H_q+O(1)$ and
$\delta\sum_{p\in\PP}1/p=\log\Pi_\delta+O(1)$, because
$\sum_pp^{-2}<\infty$ and $\delta\le1$, and
$\sum_{p\le x}1/p=\log\log x+O(1)$ by Mertens' theorem
\cite[Theorem~2.7(d)]{MV2}. Therefore
\begin{equation}\label{eq:counter-mean}
 \sum_{n\le x}|f_q(n)|^2\le C_1x\frac{\Pi_\delta}{H_q}\qquad(x>y),
\end{equation}
where $C_1\ge1$ is an absolute constant; it does not depend on $y$,
$z$ or $A$. For $1\le x\le y$, the integers $n\le x$ coprime to $q$
are powers of $2$. They have no prime factor in $\PP$, and there are at
most $x$ of them, so that $\sum_{n\le x}|f_q(n)|^2\le x$.

We now choose $y$ so large that $y\ge x_0$, $\sqrt{1+\delta}\le A$ and
$H_q>2C_1$. Since $\sum_p1/p$ diverges, there is a least prime $z>y$
for which $\Pi_\delta\ge H_q/(2C_1)$. As $H_q/(2C_1)>1$ and each factor
of $\Pi_\delta$ is at most $2$, this $z$ satisfies
\begin{equation}\label{eq:product-stop}
 \frac{H_q}{2C_1}\le\Pi_\delta\le\frac{H_q}{C_1}.
\end{equation}
The bound \eqref{eq:counter-mean} and the case $x\le y$ then give
\begin{equation}\label{eq:all-x-verified}
 \sum_{n\le x}|f_q(n)|^2\le x\qquad\text{for every real }x\ge1.
\end{equation}
This proves \eqref{eq:counter-summary}, and $f_q\in\FA$.

It remains to prove \eqref{eq:counter-order}. Put
\[
 \eta=\sqrt{1+\delta}-1,\qquad q_1=q\prod_{p\in\PP}p,\qquad
 \Pi_\eta=\prod_{p\in\PP}\Big(1+\frac\eta p\Big).
\]
The function $f_q(n)\e(an/q)$ has period $q_1$, and the weight
$(1+\delta)^{\omega_{\PP}(n)/2}$ is the product over $p\in\PP$ of
factors equal to $\sqrt{1+\delta}$ if $p\mid n$ and to $1$ otherwise.
By the Chinese remainder theorem, the sum of $f_q(n)\e(an/q)$ over a
period is therefore the product of
$\sum_{m\bmod q}\chi(m)\e(am/q)=\overline{\chi(a)}\tau(\chi)$ and of
the numbers $p-1+\sqrt{1+\delta}=p(1+\eta/p)$ for $p\in\PP$. Since
$|\tau(\chi)|=\sqrt q$ by \eqref{eq:induced-Gauss}, we obtain, when $N$
is a multiple of $q_1$,
\begin{equation}\label{eq:CRT-exact}
 S_{f_q}(N,a/q)=\frac Nq\,\overline{\chi(a)}\tau(\chi)\,\Pi_\eta,
 \qquad|S_{f_q}(N,a/q)|=\frac N{\sqrt q}\,\Pi_\eta.
\end{equation}
We compare $\Pi_\eta$ with $\Pi_\delta$. Since $\delta=2\eta+\eta^2$,
we have $(1+\eta/p)^2\le1+\delta/p$, and so
$\Pi_\eta\le\Pi_\delta^{1/2}$. Put $\gamma=\eta/\delta
=1/(1+\sqrt{1+\delta})$. The concavity of the logarithm gives
$\log(1+\gamma\delta/p)\ge\gamma\log(1+\delta/p)$, and so
$\Pi_\eta\ge\Pi_\delta^\gamma$. Now
\[
 0\le\frac12-\gamma=\frac\delta{2(1+\sqrt{1+\delta})^2}\le\frac\delta8,
 \qquad0<\delta\log\Pi_\delta\le\delta\log(2H_q)=1,
\]
by \eqref{eq:product-stop}. Consequently
\begin{equation}\label{eq:product-comparison}
 e^{-1/8}\,\Pi_\delta^{1/2}\le\Pi_\eta\le\Pi_\delta^{1/2}.
\end{equation}
By \eqref{eq:product-stop}, \eqref{eq:CRT-exact},
\eqref{eq:product-comparison} and $H_q/q=1/\varphi(q)$, we obtain for
the multiples $N$ of $q_1$
\[
 \frac{e^{-1/8}}{\sqrt{2C_1}}\,\frac N{\sqrt{\varphi(q)}}
 \le|S_{f_q}(N,a/q)|\le\frac1{\sqrt{C_1}}\,\frac N{\sqrt{\varphi(q)}}.
\]

For arbitrary real $N\ge1$, write $N=kq_1+s$ with an integer $k\ge0$
and $0\le s<q_1$, and put
$\mu_a=\overline{\chi(a)}\tau(\chi)\Pi_\eta/q$. By periodicity,
$S_{f_q}(N,a/q)=kq_1\mu_a+S_{f_q}(s,a/q)$. The Cauchy-Schwarz
inequality and \eqref{eq:all-x-verified} give
$|S_{f_q}(s,a/q)|\le s$, and $|\mu_a|\le1$ by the upper bound just
proved. Hence
\begin{equation}\label{eq:counter-all-lengths}
 S_{f_q}(N,a/q)=N\mu_a+O(q_1),\qquad
 \frac{e^{-1/8}}{\sqrt{2C_1\varphi(q)}}\le|\mu_a|
 \le\frac1{\sqrt{C_1\varphi(q)}},
\end{equation}
where the error term has modulus at most $2q_1$. It is at most
$N|\mu_a|/2$ as soon as
$N\ge4e^{1/8}\sqrt{2C_1}\,q_1\sqrt{\varphi(q)}$, and for these $N$
\eqref{eq:counter-order} holds, with absolute constants and for every
$a$ coprime to $q$.

Finally, $H_q\asymp\log\log q$. Indeed,
\[
 H_q=\prod_{3\le p\le y}\Big(1-\frac1p\Big)^{-1}
 \ge\sum_{\substack{n\le y\\n\textup{ odd}}}\frac1n\gg\log y,
\]
while $\log q\le y\log y$ gives $\log\log q\ll\log y$; and
$H_q\ll\log\log(3q)$ by \cite[(2.3)]{Robles}. If $N$ is such that
$L\ge q$, then $q\le N/L^2$ and $N/L\le N/q$, as in
Section~\ref{sec:q-sharpness}.
\end{proof}

We now deduce the assertions made after
Theorem~\ref{thm:sharpness-summary}. Fix $A>1$, let $q$ and $f_q$ be as
in the proof, and let $N$ be so large that
\eqref{eq:counter-order} holds, $L\ge q$ and $N\ge L^4$. Then
$q\le L\le N/L^3$; in particular $q\le N/L^2$, and
$R=q$ satisfies $R\le q\le N/R$. By \eqref{eq:counter-order},
\[
 |S_{f_q}(N,a/q)|\asymp\sqrt{H_q}\,\frac N{\sqrt q},
 \qquad\text{whereas}\qquad\frac NL+\frac N{\sqrt q}\le\frac{2N}{\sqrt q}.
\]
Since $H_q\to\infty$ as $q\to\infty$, none of the following holds
uniformly in $\FA$: the bound \eqref{eq:folklore}; the bound
\eqref{eq:R-intro} with $r$ in place of $\varphi(r)$, here with $r=q$
and $\beta=0$; the bound with $N/\sqrt R$ in place of the second term
of \eqref{eq:MV-intro}. Since
$H_q\asymp\log\log q$, the sum has order
$NR^{-1/2}\sqrt{\log\log(3R)}$, so that this factor cannot be removed
in \cite[Corollary~1.1]{Robles}. Note also that
$\sup_n|f_q(n)|=(1+\delta)^{\#\PP/2}\ge\Pi_\eta$ tends to infinity
with $q$, so that no class $\BA$ contains all the functions $f_q$.

\section{Conclusion and future work}\label{sec:conclusion}

Theorems~\ref{thm:global} and~\ref{thm:refined} give the bounds
\eqref{eq:folklore-conj} and \eqref{eq:refined-conj} for all bounded
multiplicative functions, in the ranges $q\le N/L^2$ and
$q\max\{1,N|\beta|\}\le N/L^2$ respectively; complete multiplicativity
is not required. The examples of Section~\ref{sec:sharpness} show that
these bounds cannot be improved in several respects. In the class $\FA$
with $A>1$, Theorem~\ref{thm:sharpness-summary} shows that the totient
in \eqref{eq:R-intro} cannot be replaced by $r$.

The functions $f_q$ of Theorem~\ref{thm:sharpness-summary} have modulus
greater than $1$ at some primes, so that this theorem does not decide
whether Theorems~\ref{thm:global}-\ref{thm:refined} hold in
$\mathcal F_1$. This will be the subject of future research.

A second question is to take the density of the coefficients into
account when $\sum_{n\le N}|f(n)|$ is of smaller order than $N$. By
Section~\ref{sec:density}, the bound \eqref{eq:density-conj} proposed
for this purpose has to be modified.
Granville and Lamzouri \cite[(1.4)]{GL} state a conjecture of this kind
for the sum restricted to the integers $n\le N$ without prime factors
exceeding $y$. For this restricted sum, de la Bret\`eche
\cite[Proposition~1]{dB} proved an estimate for multiplicative $f$
satisfying the mean-square condition in \eqref{eq:FA}; see also
\cite[Section~10.2]{dBG}.

Finally, the estimates of this paper concern the order of magnitude
only. Granville and Lamzouri propose leading constants for
$S_f(N,\alpha)$ and for the sum with logarithmic weights
\cite[(1.3) and (1.6)]{GL}. By Section~\ref{sec:constant}, the first
has to be at least $4/\sqrt6$ for multiplicative $f\in\mathcal B_1$,
and the second cannot be $4/\sqrt6$ in the whole range
$q\le(L/\log L)^2$. The constants, and the ranges of $q$ in which they
hold, remain to be determined.

\section*{Acknowledgement}

The author is grateful to Dirk Zeindler and Alexandru Zaharescu for
helpful discussions. The author conceived the paper, which builds on
the method of \cite{Robles}, developed the overall strategy of the
proof, and wrote the first draft.

In checking parts of the arguments, and in revising the
exposition, the author used ChatGPT Astra~6. The mathematical verification included
the reduction to $A=1$ in Lemma~\ref{lem:linear-A}, the deductions
involving rational approximation and logarithmic weights in
Sections~\ref{sec:deductions} and~\ref{sec:log-weights}, and the examination
of the maximal mean-square estimates over reduced residue classes in
Section~\ref{sec:energy}. In checking the proofs, particular attention
was given to the use of a single exceptional set throughout the required
intervals, the passage from equally spaced points to the full maximum
in Lemma~\ref{lem:coarse-grid}, and the mean-square condition
\eqref{eq:all-x-verified} for every initial segment in the counterexample.

Subsequently, Claude Fable~5.1 was used for an adversarial audit of the
final draft and for further revision of the exposition. The author takes full
responsibility for the mathematical content and the final text.

\end{document}